\numberwithin{equation}{section}
\font\tencyr=wncyr10 
\font\tencyi=wncyi10 
\font\tencysc=wncysc10 
\def\rus{\tencyr\cyracc}
\def\rusi{\tencyi\cyracc}
\def\rusc{\tencysc\cyracc}
\newtheorem{thm}{Theorem}[section]
\newtheorem{lm}[thm]{Lemma}
\newtheorem{prop}[thm]{Proposition}
\newtheorem{conj}[thm]{Conjecture}
\theoremstyle{remark}
\newtheorem*{rema}{Remark}
\theoremstyle{definition}
\newtheorem{ex}[thm]{Example}
\newtheorem{df}{Definition}
\newtheorem{rmk}[thm]{Remark}
\newenvironment{proof*}
{\noindent {\sl Proof.}\quad }{\hfill $\square$}
\newcommand {\ah}{{\mathfrak a}}
\newcommand {\be}{{\mathfrak b}}
\newcommand {\g}{{\mathfrak g}}
\newcommand {\fH}{{\eus H}}
\newcommand {\n}{{\mathfrak n}}
\newcommand {\p}{{\mathfrak p}}
\newcommand {\te}{{\mathfrak t}}
\newcommand {\ut}{{\mathfrak u}}
\newcommand {\gH}{{\eus H}}
\newcommand {\esi}{\varepsilon}
\newcommand {\ap}{\alpha}
\newcommand {\lb}{\lambda}
\newcommand {\ca}{{\mathcal A}}
\newcommand {\BR}{{\mathbb R}}
\newcommand {\BC}{{\mathbb C}}
\newcommand {\BZ}{{\mathbb Z}}
\newcommand {\ess}{{\mathsf{Ess}}}
\newcommand {\hot}{{\mathsf{ht}}}
\newcommand {\GR}[2]{{\textrm{{\bf #1}}}_{#2}}
\newcommand {\un}{\underline}
\newcommand {\Ab}{\mathfrak{Ab}}
\newcommand {\Abo}{\overset{o}{\mathfrak{Ab}}}
\newcommand {\beq}{\begin{equation}}
\newcommand {\eeq}{\end{equation}}
\newcommand{\curge}{\succcurlyeq}
\newcommand{\curle}{\preccurlyeq}
\renewcommand{\le}{\leqslant}
\renewcommand{\ge}{\geqslant}
\newcommand{\ihp}{I(\hat\ap)}
\newcommand{\eus}{\EuScript}
\newcommand{\MS}{\textsf{MICS}}
\begin{document}
\setlength{\parskip}{2pt plus 4pt minus 0pt}
\hfill {\scriptsize May 4, 2015} 
\vskip1.5ex

\title[MICS and abelian ideals]{Minimal inversion complete sets and maximal abelian ideals}
\author{Dmitri I. Panyushev}
\address[]{
Institute for Information Transmission Problems of the R.A.S, B. Karetnyi per. 19, 
127051 Moscow,   Russia}
\email{panyushev@iitp.ru}
\keywords{Root system, Weyl group, inversion set, essential root}
\subjclass[2010]{17B20, 17B22, 20F55}
\begin{abstract}

Let $\g$ be a simple Lie algebra, $\be$ a fixed Borel subalgebra, and $W$ the Weyl group 
of $\g$. 
In this note, we study a relationship  between the maximal abelian ideals of $\be$ and the minimal inversion complete sets of $W$. The latter have been recently defined by Malvenuto et al. ({\it J.\,Algebra},
{\bf 424}\,(2015), 330--356.) 
\end{abstract}
\maketitle

\section*{Introduction}

\noindent
Recently, Malvenuto et al. \cite{MMOP} introduced the {\sl minimal inversion complete sets} (\MS\ for short) 
in finite Coxeter groups and determined the maximum cardinality of a \MS\ for the classical series 
of Weyl groups $W$ and $\GR{G}{2}$. They also gave a lower bound on the maximum cardinality of \MS\ for
the other exceptional Weyl groups. It is noticed in \cite{MMOP} that in the simply-laced case this 
maximum cardinality is related to the maximum dimension of abelian ideals of a Borel subalgebra $\be$ of a 
simple Lie algebra $\g$ with Weyl group $W$. In this article, we elaborate on the relationship between 
maximal abelian ideals of $\be$ and \MS\ in $W$. In the simply-laced case, we give a uniform construction of a \MS\ for {\bf any} maximal abelian ideal of $\be$. We also determine the essential set of roots for some \MS\ obtained. (See definitions below.)

Let $\Delta$ be an irreducible crystallographic root system in a real Euclidean vector space $V$ and 
$W\subset GL(V)$ the corresponding finite reflection group. Let $\Delta^+$ be a set of positive roots, 
$\Pi$ the set of simple roots in $\Delta^+$, and $\theta$ the highest root. We regard $\Delta^+$ as a poset
with respect to the usual order ``$\curge$''. If $w\in W$, then 
$\eus N(w)=\{\gamma\in\Delta^+\mid w(\gamma)\in -\Delta^+\}$ is the {\it inversion set\/} of $w$.

\begin{df}[\cite{MMOP}]      \label{def1}
A subset $\eus F$ of $W$ is a  {\it minimal inversion complete set\/} (=\,\MS), if
$\bigcup_{w\in\eus F} \eus N(w)=\Delta^+$ and 
the equality fails for any proper subset of $\eus F$. 
\end{df}
\begin{df}[\cite{MMOP}]  \label{def:essential}
A root $\gamma\in\Delta^+$ is said to be {\it essential\/} for a given \MS\ $\eus F$, if there is a unique
$w\in \eus F$ such that $\gamma\in\eus N(w)$. 
\end{df}
\noindent
Write $\ess(\eus F)$ for the set of all essential roots.
By Definition~\ref{def1}, each 
$\eus N(w)$, $w\in\eus F$, contains at least one essential root. 
Hence $\#\ess(\eus F)\ge \# \eus F$.
Picking just one essential root in every $\eus N(w)$ yields a subset of $\ess(\eus F)$ that plays an
important role in \cite{MMOP}. However,  
the whole set $\ess(\eus F)$ is of interest, too. We introduce the {\it defect\/} of  $\eus F$ as
$
    \mathrm{defect}(\eus F):=\#\ess(\eus F)-\#\eus F 
$
and consider it as another measure of ``goodness'' of $\eus F$. That is, for us, the best \MS\ are those with large
cardinality or small defect.
If $w_0\in W$ is the longest element, then 
$\eus F=\{w_0\}$ is a \MS\ and  $\ess(\{w_0\})=\Delta^+$. Hence this \MS\ is very bad, since
the cardinality is small and defect is large. 

Let $\g$ be a simple Lie algebra over $\BC$ with a fixed triangular decomposition
$\g=\ut^-\oplus\te\oplus\ut^+$. Here $\be=\te\oplus\ut^+$ is a fixed Borel subalgebra, $\Delta$ is the set of
$\te$-roots in $\g$, and $\Delta^+$ is the subset corresponding to $\ut^+$.
It is well known and easily seen that if $\ah$ is an abelian ideal of $\be$, then $\ah\subset\ut^+$ and hence
$\ah=\bigoplus_{\gamma\in I(\ah)}\g_\gamma$ for certain subset $I(\ah)\subset\Delta^+$. 
The subsets of the form $I(\ah)$ are characterised by the following two properties:

\textbullet \quad If $\gamma\in I(\ah)$, $\nu\in\Delta^+$, and $\gamma+\nu\in\Delta$, then
$\gamma+\nu\in I(\ah)$;  \ [$\ah$ is $\be$-stable]

\textbullet \quad If $\gamma,\gamma'\in I(\ah)$, then $\gamma+\gamma'\not\in \Delta$. \qquad
[$\ah$ is abelian]
 
\begin{thm}   \label{thm:intro1}
If all the roots of $\Delta$ have the same length (the {\sf simply-laced} or {\sf ADE} case), then there is a 
natural \MS\ associated with {\bf any} maximal abelian ideal $\ah$ of\/ $\be$. 
For any $\gamma\in I(\ah)$, we define the canonical
element $\tilde w_\gamma\in W$ such that 
$\eus F_\ah=\{\tilde w_\gamma\mid \gamma\in I(\ah)\}$ is a \MS. In particular, 
$\#(\eus F_\ah)=\dim\ah=\#I(\ah)$. Furtermore, $\ess(\eus F_\ah)\supset I(\ah)$.
\end{thm}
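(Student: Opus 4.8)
The plan is to exhibit each $\tilde w_\gamma$ through its inversion set. For $\gamma\in I(\ah)$ I would set
\[
N_\gamma:=\{\eta\in\Delta^+ : \gamma-\eta\in I(\ah)\cup\{0\}\}=\{\gamma\}\cup\{\gamma-\gamma' : \gamma'\in I(\ah),\ \gamma-\gamma'\in\Delta^+\},
\]
and define $\tilde w_\gamma$ to be the unique element of $W$ with $\eus N(\tilde w_\gamma)=N_\gamma$, once $N_\gamma$ has been shown to be biconvex (closed and coclosed), so that it really is an inversion set. The whole theorem is then read off from two facts about the family $\{N_\gamma\}$: that each $N_\gamma$ meets $I(\ah)$ only in $\gamma$, and that the $N_\gamma$ jointly cover $\Delta^+$.

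The first fact is immediate, and is where abelianness enters at the formal level: if $\eta\in N_\gamma\cap I(\ah)$ with $\eta\ne\gamma$, then $\gamma-\eta=\gamma'\in I(\ah)$, so $\eta,\gamma'\in I(\ah)$ with $\eta+\gamma'=\gamma\in\Delta$, contradicting that $\ah$ is abelian. Hence $N_\gamma\cap I(\ah)=\{\gamma\}$. From this alone the $\tilde w_\gamma$ are pairwise distinct and $\#\eus F_\ah=\#I(\ah)=\dim\ah$; each $\gamma\in I(\ah)$ lies in $\eus N(\tilde w_\gamma)$ and in no other member of $\eus F_\ah$, so $I(\ah)\subseteq\ess(\eus F_\ah)$; and, granting completeness, deleting $\tilde w_\gamma$ uncovers $\gamma$, so $\eus F_\ah$ is minimal. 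Thus only completeness and the biconvexity of $N_\gamma$ remain.

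For biconvexity the simply-laced hypothesis does the heavy lifting. Closedness holds vacuously: for distinct $\beta_1,\beta_2\in N_\gamma$ put $\xi_i=\gamma-\beta_i\in I(\ah)\cup\{0\}$; when both $\xi_i\ne0$ the ADE normalization gives $(\gamma,\xi_i)=1$ and hence $(\beta_1,\beta_2)=(\xi_1,\xi_2)$, which is $\ge0$ because $\xi_1+\xi_2\notin\Delta$ by abelianness, so $\beta_1+\beta_2\notin\Delta$ (the cases $\xi_i=0$ are similar). For coclosedness, suppose $\eta_1+\eta_2\in N_\gamma$ with $\gamma-(\eta_1+\eta_2)=\gamma''\in I(\ah)$; from $\gamma''+\eta_1+\eta_2=\gamma\in\Delta$ one gets $(\gamma'',\eta_1)+(\gamma'',\eta_2)=-1$, so $\gamma''+\eta_j\in\Delta$ for some $j$, and $\be$-stability upgrades this to $\gamma''+\eta_j\in I(\ah)$, placing the other $\eta$ in $N_\gamma$. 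I would next reduce completeness to the statement that every $\eta\in\Delta^+\setminus I(\ah)$ is a difference $\gamma-\gamma'$ of ideal roots; by $\be$-stability this holds as soon as $\eta+\gamma'\in\Delta$ for some $\gamma'\in I(\ah)$.

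The step I expect to be the real obstacle is the remaining coclosedness case $\eta_1+\eta_2=\gamma$, together with completeness; both are exactly where maximality of $\ah$ must be used. The anchor case asks that $\gamma=\eta_1+\eta_2$ force some $\eta_j\in I(\ah)$, i.e. that $I(\ah)$ itself be coclosed, hence equal to $\eus N(u)$ for some $u\in W$; and completeness asks that no $\eta\in\Delta^+\setminus I(\ah)$ satisfy $\eta+\gamma'\notin\Delta$ for all $\gamma'\in I(\ah)$, since such an $\eta$ could be adjoined to enlarge the abelian ideal. I would establish both through the Peterson encoding $\ah\leftrightarrow w_\ah\in\widehat{W}$ with $\eus N(w_\ah)=\{\delta-\gamma : \gamma\in I(\ah)\}$ (biconvex in the affine sense precisely because $\ah$ is a $\be$-stable abelian ideal), translating ``$\ah$ maximal'' into a combinatorial condition on $w_\ah$ that yields coclosedness of $I(\ah)$ and completeness at once. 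Pinning down that condition, and verifying it is available for every maximal abelian ideal in the ADE case, is where the substantive work lies.
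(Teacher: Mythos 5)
Your reductions are mostly sound: the closedness of $N_\gamma$, the first coclosedness case, and the derivation of minimality and of $I(\ah)\subseteq\ess(\eus F_\ah)$ from disjointness plus completeness all check out. But the step you yourself flag as ``the real obstacle'' --- that maximality of $\ah$ forces the anchor case, i.e.\ that $I(\ah)$ is coclosed --- is not merely unproven; it is \emph{false}, so the construction collapses. Take $\Delta$ of type $\GR{D}{4}$ with $\ap_1=\esi_1-\esi_2$, $\ap_2=\esi_2-\esi_3$, $\ap_3=\esi_3-\esi_4$, $\ap_4=\esi_3+\esi_4$ and $\theta=\esi_1+\esi_2$. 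The maximal abelian ideal attached to the branch node $\ap_2$ is $I(\hat\ah)=\{\theta,\,\esi_1+\esi_3,\,\esi_1+\esi_4,\,\esi_1-\esi_4,\,\esi_2+\esi_3\}$, the set of all positive roots of height $\ge 3$; it is an abelian ideal, and it is maximal because every root of height $\le 2$ sums with some member of $I(\hat\ah)$ to a root. Now $\gamma=\esi_1+\esi_4\in I(\hat\ah)$ decomposes as $\gamma=(\esi_1-\esi_3)+(\esi_3+\esi_4)$ with \emph{neither} summand in $I(\hat\ah)$, so $I(\hat\ah)$ is not coclosed. Worse, for this $\gamma$ your set is $N_\gamma=\{\gamma\}$ (indeed $\gamma-\theta$ and $\gamma-(\esi_1+\esi_3)$ are negative, while $\gamma-(\esi_1-\esi_4)=2\esi_4$ and $\gamma-(\esi_2+\esi_3)$ are not roots), and a singleton $\{\gamma\}$ with $\gamma$ non-simple is never an inversion set. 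Hence no combinatorial condition on the Peterson element $w_\ah$ can be ``pinned down'' to finish your argument: the obstruction is not a missing lemma but a wrong definition of $\eus N(\tilde w_\gamma)$. (Your $N_\gamma$ does coincide with the correct inversion set exactly when $I(\ah)=\Delta_\ap(1)$ is an abelian nilradical, where $\ap$-heights force coclosedness; but the force of the theorem is the boldfaced ``any''.)

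The paper's construction shows what $N_\gamma$ is missing. Every maximal abelian ideal is $I(\ap)_{max}$ for a simple root $\ap$, and every $\gamma\in I(\ap)_{max}$ satisfies $\gamma\curge\ap$; one sets $\tilde w_\gamma=\sigma_\ap w_{\gamma,\ap}$, the minimal-length element sending $\gamma$ to $-\ap$, so that $\eus N(\tilde w_\gamma)=\{\gamma\}\cup\eus N(w_{\gamma,\ap})$. This set always contains your $N_\gamma$, but is larger in general: in the example above, $\eus N(\tilde w_{\esi_1+\esi_4})=\{\esi_1+\esi_4,\,\ap_1,\,\ap_4\}$, and the extra roots have differences $\gamma-\ap_1=\esi_2+\esi_4$ and $\gamma-\ap_4=\esi_1-\esi_3$ lying \emph{outside} the ideal --- precisely the roots your definition excludes. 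The two facts you hoped to read off from the definition then become theorems: the disjointness $\eus N(w_{\gamma,\ap})\cap I(\ap)_{max}=\varnothing$ is Lemma~\ref{lm:lezhit-vne-ideala}, proved via commutativity of $w_{\gamma,\ap}$, the ``located below'' Proposition~\ref{lm:lezhit-nizhe}, and the identity $\eus N(w_{\theta,\ap})=\gH\setminus I(\ap)_{max}$; and completeness is Lemma~\ref{lm:summa}. On the latter, note that your one-line appeal to maximality is also too quick: if $\eta+\gamma'\notin\Delta$ for all $\gamma'\in I(\ah)$, the set $I(\ah)\cup\{\eta\}$ need not be an ideal, and its ideal closure need not be abelian; the paper handles this by a downward induction starting from the maximal elements of $\Delta^+\setminus I(\ap)_{max}$, where adjunction genuinely works.
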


We provide a full description of $\ess(\eus F_\ah)$ for two classes of maximal abelian ideals $\ah$.
Let $\p \supset \be$ be a parabolic subalgebra with abelian nilradical. Then $\p=\p_\ap$ is a maximal parabolic subalgebra that  is determined by one simple root $\ap$, and its nilradical  $\n_\ap$ is a maximal abelian ideal of $\be$.  

\begin{thm}   \label{thm:intro2} 
Suppose that $\Delta$ is simply-laced and\/ $\n_\ap$ is an abelian nilradical such that $(\ap,\theta)=0$. 
Then\/ $\ess(\eus F_{\n_\ap})=I(\n_\ap)$. 
\end{thm}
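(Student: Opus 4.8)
The plan is to prove the non-trivial inclusion $\ess(\eus F_{\n_\ap})\subseteq I(\n_\ap)$, the reverse being Theorem~\ref{thm:intro1}. Because $\n_\ap$ is abelian, the coefficient of $\ap$ in any positive root is $0$ or $1$, so $\Delta^+=I(\n_\ap)\sqcup\Delta_L^+$, where $\Delta_L^+$ consists of the positive roots in which $\ap$ does not occur and is the positive system of the Levi $\el$ with simple roots $\Pi\setminus\{\ap\}$. Since every root of $I(\n_\ap)$ is already essential, it is enough to show that no $\nu\in\Delta_L^+$ is essential, i.e.\ that each such $\nu$ lies in at least two inversion sets $\eus N(\tilde w_\gamma)$ (it lies in at least one, by inversion completeness). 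The first step is to read off from the construction of the canonical elements the description
\[
  \eus N(\tilde w_\gamma)\cap\Delta_L^+=\{\nu\in\Delta_L^+\mid \gamma-\nu\in I(\n_\ap)\},\qquad \gamma\in I(\n_\ap),
\]
so that the number of inversion sets containing a fixed $\nu$ equals $p_\nu:=\#\{\gamma\in I(\n_\ap)\mid \gamma-\nu\in I(\n_\ap)\}$. The theorem thereby reduces to the single inequality $p_\nu\ge 2$ for all $\nu\in\Delta_L^+$.

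Next I would reformulate $p_\nu$ representation-theoretically. As a module over the Levi $\el$, the abelian nilradical $\n_\ap$ is minuscule and irreducible, with highest weight $\theta$ and lowest weight $\ap$, and its weight set is exactly $I(\n_\ap)$, each weight of multiplicity one; the hypothesis of the theorem is precisely the orthogonality $(\theta,\ap)=0$ of these two extreme weights. Writing $\langle\mu,\nu^\vee\rangle=2(\mu,\nu)/(\nu,\nu)$, minusculeness gives $\langle\mu,\nu^\vee\rangle\in\{-1,0,1\}$ for every weight $\mu$ and every root $\nu$, so the weights on which $\nu$ acts non-trivially fall into $\nu$-strings of length two. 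The pairs $\{\gamma-\nu,\gamma\}$ counted by $p_\nu$ are exactly these strings, whence $p_\nu=\#\{\mu\in I(\n_\ap)\mid\langle\mu,\nu^\vee\rangle=1\}$ and $2p_\nu=\sum_{\mu}\langle\mu,\nu^\vee\rangle^2$. Thus $p_\nu\ge 2$ asserts that $\n_\ap$, restricted to the $\tri$ attached to $\nu$, contains at least two copies of the two-dimensional module.

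The decisive step is a dichotomy according to whether the subdiagram $\Pi\setminus\{\ap\}$ is connected. If it is connected, then the semisimple part of $\el$ is simple, $W_L$ acts transitively on $\Delta_L$ (all roots being of equal length), and since the weight set $I(\n_\ap)$ is a single $W_L$-orbit the $W_L$-invariant quadratic form $\nu\mapsto\sum_\mu(\mu,\nu)^2$ is a scalar multiple of $(\,,\,)$; hence $p_\nu=\tfrac12\sum_\mu\langle\mu,\nu^\vee\rangle^2$ does not depend on $\nu$. Running over the minuscule representations of simply-laced simple Lie algebras, this common value equals $1$ only for the defining representation of $\mathfrak{sl}_m$ and its dual, and is $\ge 2$ in all remaining cases (the vector and half-spin modules of $D_m$, the module $\Lambda^2$ of $A_m$, and the $27$ of $E_6$ are the ones actually occurring here). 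But the defining representation arises as an abelian nilradical only at the two end nodes of $A_n$, which are exactly the cominuscule nodes satisfying $(\ap,\theta)\ne 0$, and are therefore excluded by hypothesis. If $\Pi\setminus\{\ap\}$ is disconnected, then $\ap$ is a cut vertex of the Dynkin diagram of $\g$; among cominuscule nodes this occurs only for an interior node of type $A_n$, where the semisimple part of $\el$ has two simple factors and $\n_\ap=M_1\boxtimes M_2$ with each $M_i$ a defining representation (up to dualizing). For $\nu$ a root of the $i$-th factor one then computes $p_\nu=\dim M_{3-i}\ge 2$. In every case $p_\nu\ge 2$, which finishes the argument.

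I expect the main obstacle to be twofold. First, the entire reduction hinges on the clean equivalence $\nu\in\eus N(\tilde w_\gamma)\Leftrightarrow\gamma-\nu\in I(\n_\ap)$, which must be extracted by unwinding the definition of $\tilde w_\gamma$ from Theorem~\ref{thm:intro1}; this is the step most sensitive to the precise construction. Second, and more conceptually, the force of the hypothesis is exactly that the unique minuscule module with $p_\nu=1$ is the defining representation of $\mathfrak{sl}_m$, and that this single exceptional family coincides with the cominuscule nodes failing $(\ap,\theta)=0$. Turning this coincidence into a hypothesis-free equivalence, rather than verifying it node by node through the classification, is where the real content of the theorem resides.
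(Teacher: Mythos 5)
Your reduction and your counting object coincide with the paper's own proof (Theorem~\ref{thm:ab-nilrad}): there too, the point is that every $\mu\in\Delta_\ap(0)^+$ has $\eus F_\ap$-multiplicity equal to $p_\mu=\#\{\gamma\in I(\ap)_{max}\mid \gamma-\mu\in I(\ap)_{max}\}$, and that this number is at least $2$. The genuine gap in your write-up is the step you yourself flag: the equivalence $\nu\in\eus N(\tilde w_\gamma)\cap\Delta_\ap(0)^+\Leftrightarrow \gamma-\nu\in I(\ap)_{max}$ is not an unwinding of the construction of $\tilde w_\gamma$; it is precisely Proposition~\ref{prop:inv-set-for-height-1}, whose proof in the paper requires the counting apparatus of Section~\ref{sect:some-prop-roots} (the depth identity $\ell(\sigma_\gamma)=2\mathrm{dp}(\gamma)-1$, giving that $\gamma$ has exactly $\hot(\gamma)-1$ decompositions as a sum of two positive roots, matched against $\ell(w_{\gamma,\ap})=\hot(\gamma)-1$ from Proposition~\ref{prop:unique-short}). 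The saving grace is that your argument does not need the equality: the inequality $n_\ap(\nu)\ge p_\nu$ suffices, and it follows from the easy inclusion only. Indeed, since $[\gamma:\ap]=[\ap:\ap]=1$, every minimal path from $\gamma$ down to $\ap$ subtracts only simple roots different from $\ap$, so $w_{\gamma,\ap}\in W_\ap(0)$ preserves $\ap$-heights; applying it to $\gamma=\nu+(\gamma-\nu)$ with $\gamma-\nu\in I(\ap)_{max}$, one gets that $w_{\gamma,\ap}(\gamma-\nu)$ lies in $\Delta_\ap(1)$ and is not $\ap$, whence $w_{\gamma,\ap}(\nu)=\ap-w_{\gamma,\ap}(\gamma-\nu)$ is a negative root and $\nu\in\eus N(\tilde w_\gamma)$. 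You should make this explicit; as written, the central combinatorial identity is asserted, not proved.

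Granting that repair, your second half is correct and takes a genuinely different route from the paper. The paper computes the multiplicity exactly and uniformly, $n_\ap(\mu)=h-h_j$, by comparing $\#\Phi_\mu$ inside $\Delta$ with $\#\Phi_\mu$ inside the Levi factor $\Delta_\ap(0)_j$ (Lemma~\ref{lm:Phi-gamma}), and then deduces $h-h_j\ge 2$ from $(\ap,\theta)=0$ by a two-line height argument; no classification enters anywhere. You instead prove constancy of $p_\nu$ on each irreducible factor (for which plain $W_\ap(0)$-transitivity on the roots of a factor already suffices; the invariant-form/Schur argument is more than needed) and then identify the constant essentially with the Dynkin index of the minuscule $\el$-module $\n_\ap$, invoking the classification of minuscule modules to see that this index is $1$ exactly for the defining representation of $\mathfrak{sl}_m$ and its dual, i.e.\ exactly at the end nodes of $\GR{A}{n}$, which are the cominuscule nodes with $(\ap,\theta)\ne 0$. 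The values implicit in your list are right ($2$ for the vector module of $\GR{D}{m}$, $m-2$ for $\Lambda^2\BC^m$, $4$ for a half-spin of $\GR{D}{5}$, $6$ for the $27$ of $\GR{E}{6}$, and $\dim M_{3-i}$ in the $\GR{A}{n}$ tensor case), so the case check closes. What your route buys is a conceptual interpretation of the multiplicity as an index of a minuscule module, and the observation that the failure at the ends of $\GR{A}{n}$ ``is'' the uniqueness of the index-one minuscule module; what it costs is exactly the case-by-case verification, which the paper's uniform Coxeter-number argument was designed to avoid.
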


\noindent If $\Delta$ is of type $\GR{A}{n}$, then $(\ap,\theta)\ne 0$ for some abelian nilradicals and  $\ess(\eus F_{\n_\ap})=\Delta^+$ in 
those cases, see Example~\ref{ex:An}.

In general, the maximal abelian ideals are naturally parameterised by the long simple 
roots~\cite[Corollary\,3.8]{imrn}. (We say more on this correspondence in Section~\ref{sect:odin}.)
In particular, in the {\sf ADE} case, there is a bijection between $\Pi$ and the maximal abelian ideals.
Suppose that $\theta$ is a fundamental weight (in the simply-laced case, this means that $\Delta$ is of type
$\GR{D}{n}$ or $\GR{E}{n}$). Write $\hat\ap$ for the unique simple root such that $(\theta,\hat\ap)\ne 0$.
The corresponding maximal abelian ideal $\hat\ah:=\ah_{\hat\ap}$ has the property that $I(\hat\ah)\subset
\gH:=\{\gamma\in \Delta^+\mid (\gamma,\theta) > 0\}$~\cite{imrn}.

\begin{thm}   \label{thm:intro3}
For the above maximal abelian ideal $\hat\ah$, we have $\ess(\eus F_{\hat\ah})=\gH$.
\end{thm}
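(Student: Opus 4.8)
The plan is to compute, for each $\nu\in\Delta^+$, the multiplicity
\[
   m(\nu):=\#\{\gamma\in I(\hat\ah)\mid \nu\in\eus N(\tilde w_\gamma)\},
\]
so that $\nu\in\ess(\eus F_{\hat\ah})$ exactly when $m(\nu)=1$, while $m(\nu)\ge 1$ for every $\nu$ because $\eus F_{\hat\ah}$ is a \MS. Normalising $(\theta,\theta)=2$ and setting $c(\nu):=(\nu,\theta)$, one has $c(\nu)\in\{0,1,2\}$ for $\nu\in\Delta^+$ (since $\theta$ is the highest root), with $c(\nu)=2$ iff $\nu=\theta$, and $\gH=\{\nu\mid c(\nu)\ge 1\}$. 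I would first record the Heisenberg structure of $\gH$: the level $\{\nu\mid c(\nu)=1\}$ splits into pairs $\{\nu,\theta-\nu\}$, and since $\hat\ah$ is a maximal abelian ideal with $I(\hat\ah)\subset\gH$ (see \cite{imrn}), the set $I(\hat\ah)$ consists of $\theta$ together with exactly one root from each such pair. This yields the disjoint decomposition
\[
   \gH = I(\hat\ah)\ \sqcup\ \{\theta-\gamma\mid \gamma\in I(\hat\ah),\ \gamma\ne\theta\},
\]
so the assertion $\ess(\eus F_{\hat\ah})=\gH$ breaks into three claims: (i) $m(\gamma)=1$ for $\gamma\in I(\hat\ah)$; (ii) $m(\theta-\gamma)=1$ for $\gamma\in I(\hat\ah)\setminus\{\theta\}$; and (iii) $m(\nu)\ge 2$ whenever $c(\nu)=0$.

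Claim (i) is nothing but $\ess(\eus F_{\hat\ah})\supset I(\hat\ah)$ from Theorem~\ref{thm:intro1}, which says precisely that $\gamma\in\eus N(\tilde w_\gamma)$ while $\gamma\notin\eus N(\tilde w_{\gamma'})$ for every other $\gamma'\in I(\hat\ah)$; I take it as given. For (ii) and (iii) I would open up the construction of the canonical elements $\tilde w_\gamma$ and follow their action on the three levels $c=0,1,2$. The structural fact I expect to isolate is that, for $\gamma\in I(\hat\ah)\setminus\{\theta\}$, the element $\tilde w_\gamma$ inverts the entire Heisenberg pair $\{\gamma,\theta-\gamma\}$, whereas no other $\tilde w_{\gamma'}$ sends $\theta-\gamma$ into $-\Delta^+$; granting this, $m(\theta-\gamma)=1$ and (ii) follows. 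Here the hypothesis that $\theta$ is a fundamental weight enters decisively, since it is what makes $\gH$ a Heisenberg set with a clean involution $\nu\mapsto\theta-\nu$.

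For claim (iii) I would show that a root $\nu$ with $c(\nu)=0$ can never be singled out. The roots with $c=0$ form the positive system of the subsystem $\theta^\perp\cap\Delta$, and I expect to show that whenever such a $\nu$ lies in some $\eus N(\tilde w_\gamma)$ one can produce a second index: using the closedness and coclosedness of inversion sets together with a level-one root $\beta\in\gH$ satisfying $\nu+\beta\in\gH$ (such $\beta$ exists because the $\theta^\perp$-action moves the pairs of $\gH$), the root $\nu$ is forced into a second $\eus N(\tilde w_{\gamma'})$ with $\gamma'\ne\gamma$. The comparison with Theorem~\ref{thm:intro2}, where the hypothesis $(\ap,\theta)=0$ already yields $\ess(\eus F_{\n_\ap})=I(\n_\ap)$ with no extra essential roots, supports the expectation that the only source of essential roots outside $I(\hat\ah)$ is the $\theta$-pairing internal to $\gH$.

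The main obstacle will be claims (ii) and (iii): both demand genuinely explicit control of the inversion sets $\eus N(\tilde w_\gamma)$ — not merely the black-box properties furnished by Theorem~\ref{thm:intro1}, but the full action of each $\tilde w_\gamma$ on the levels $c=1$ and $c=0$. Proving the joint inversion of each pair $\{\gamma,\theta-\gamma\}$ (and the uniqueness of its coverer) together with the double covering of every $c=0$ root is exactly where the special geometry afforded by ``$\theta$ a fundamental weight'' must be exploited, and I would expect this to be the technical heart of the argument.
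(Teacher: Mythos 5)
Your skeleton --- splitting $\gH$ into $I(\hat\ah)$ and $\{\theta-\gamma\mid \gamma\in I(\hat\ah)\setminus\{\theta\}\}$ and reducing the theorem to claims (i)--(iii) --- coincides with the paper's, and (i) is fine. But the structural fact on which you base (ii) is false. For $\gamma\in I(\hat\ah)\setminus\{\theta\}$ one has $(\gamma,\theta^\vee)=1$, i.e.\ $[\gamma:\hat\ap]=1$, so $\gamma-\hat\ap$ has $\hat\ap$-coefficient $0$; hence no reduced expression of $w_{\gamma,\hat\ap}$ contains $\sigma_{\hat\ap}$, so $w_{\gamma,\hat\ap}\in W_{\hat\ap}(0)$ and $\eus N(w_{\gamma,\hat\ap})\subset\Delta_{\hat\ap}(0)^+$ (Proposition~\ref{prop:inv-set-for-height-1}). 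Consequently $\eus N(\tilde w_\gamma)=\{\gamma\}\cup\eus N(w_{\gamma,\hat\ap})$ meets $\Delta_{\hat\ap}(1)$ only in $\gamma$, and since $\theta-\gamma\in\Delta_{\hat\ap}(1)$, the element $\tilde w_\gamma$ does \emph{not} invert $\theta-\gamma$: your claim that $\tilde w_\gamma$ ``inverts the entire Heisenberg pair'' fails for every $\gamma$. The actual unique coverer of $\theta-\gamma$ is $\tilde w_\theta$, because $\eus N(\tilde w_\theta)=\{\theta\}\cup\eus N(w_{\theta,\hat\ap})$ and $\eus N(w_{\theta,\hat\ap})=\{\theta-\gamma'\mid \gamma'\in I(\hat\ah)\setminus\{\theta\}\}$ (Lemma~\ref{lm:N(w_theta,ap)}); this also refutes the other half of your claim (``no other $\tilde w_{\gamma'}$ sends $\theta-\gamma$ into $-\Delta^+$''), and had your picture been correct, every $\theta-\gamma$ would be inverted at least twice, contradicting (ii) itself. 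So the real content of (ii) is the opposite of what you assert: one must show that \emph{no} $\tilde w_\gamma$ with $\gamma\ne\theta$ inverts anything of $\Delta_{\hat\ap}(1)$ beyond $\gamma$; the paper does this via the involution $\hat w=w_{\theta-\hat\ap,\hat\ap}$ (Proposition~\ref{prop:hat-w-invol}) together with the left-factor/``located below'' Proposition~\ref{lm:lezhit-nizhe}.

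Claim (iii) is stated as an expectation, not proved: ``closedness and coclosedness of inversion sets'' plus the existence of some $\beta$ with $\nu+\beta\in\gH$ does not by itself force $\nu$ into a second inversion set, and you supply no mechanism that does. The paper's argument here is quantitative. For $\mu\in\Delta_{\hat\ap}(0)^+$ lying in an irreducible component $\Delta_{\hat\ap}(0)_j$: by Proposition~\ref{prop:inv-set-for-height-1}, $n_{\hat\ap}(\mu)=\#\{\gamma\in I(\hat\ah)\mid \gamma-\mu\in\Delta^+\}$; by Lemma~\ref{lm:Phi-gamma}, applied both in $\Delta$ and in $\Delta_{\hat\ap}(0)_j$, the number of $\gamma\in\Delta_{\hat\ap}(1)$ with $\gamma-\mu\in\Delta^+$ equals $h-h_j$; and the assignment $\gamma\mapsto \mu+(\theta-\gamma)$ injects the ``bad'' such $\gamma$ (those in $\eus N(w_{\theta,\hat\ap})$) into the ``good'' ones (those in the ideal), using that $\theta-\gamma\in I(\hat\ah)$ and the ideal property guarantee $\mu+(\theta-\gamma)$ is a root of $I(\hat\ah)$ --- this is where the pairing $\nu\mapsto\theta-\nu$ genuinely enters, in (iii) rather than in (ii). Hence $n_{\hat\ap}(\mu)\ge\frac{1}{2}(h-h_j)$, and a separate root-theoretic step (using $[\theta:\hat\ap]=2$ and the fact that $\theta-2\hat\ap-\ap$ is never a root in the simply-laced case) yields $h-h_j\ge 4$, whence $n_{\hat\ap}(\mu)\ge 2$. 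Without these counting steps, or some equivalent, claim (iii) --- and with it the theorem --- remains unproved.
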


\noindent 
Explicit computations for $\GR{D}{n}$ and $\GR{E}{n}$ ($n\le 6$) suggest that it might be true that if 
$\theta$ is fundamental, then 
$\ess(\eus F_{\ah})\subset 
I(\ah)\cup \gH$. 

The general theory of abelian ideals (of $\be$) is based on a relationship with the so-called {\it minuscule 
elements\/} of the affine Weyl group $\widehat W$ (the Kostant-Peterson theory, see \cite{ko98} and also 
\cite{cp1}). Proofs in this article heavily rely on some further results obtained in \cite{imrn,jems}. For instance, the construction of $\eus F_\ah$ (in the {\sf ADE} case!) exploits 
the simple root $\ap$ corresponding to the maximal abelian ideal $\ah=\ah_\ap$. Furthermore, if $\gamma\in I(\ah_\ap)$, then
$\gamma\curge \ap$ \cite[Theorem\,3.5]{jems}, and since $\|\gamma\|=\|\ap\|$, we are able to introduce 
the element of minimal length in $W$ taking $\gamma$ to $\ap$, which is denoted by $w_{\gamma,\ap}$.
This is the crucial step in constructing the elements $\tilde w_\gamma$ occurring in Theorem~\ref{thm:intro1}. Actually, the existence of the elements of minimal length $w_{\gamma,\mu}$ is proved for any pair of positive roots such that $\|\gamma\|=\|\mu\|$ and $\gamma\curge\mu$, see 
Prop.~\ref{prop:unique-short} and Remark~\ref{rem:passage-dual}(1).
 
The article is organised as follows. In Section~\ref{sect:odin}, we provide preliminaries on abelian ideals and
elements $w_{\gamma,\mu}\in W$ associated with a pair of positive roots such that $\|\gamma\|=\|\mu\|$ 
and $\gamma\curge\mu$. Theorem~\ref{thm:intro1} is proved in Section~\ref{sect:dva}. Section~\ref{sect:some-prop-roots} contains some preparatory properties of long roots that are needed in Section~\ref{sect:essential}, where we prove Theorems~\ref{thm:intro2} and \ref{thm:intro3}. In Section~\ref{sect:fin},
we discuss some conjectures on the essential set of $\eus F_\ap$ in the cases that are not covered by
Theorems~\ref{thm:intro2} and \ref{thm:intro3}.

{\small
{\bf Acknowledgements.} A part of this work was done during my visit to 
the Max-Planck-Institut f\"ur Mathematik (Bonn) in Spring 2015. 
}

\section{Maximal abelian ideals and simple roots}
\label{sect:odin}

\noindent
In what follows, we identify the abelian ideals of $\be$ with the corresponding sets of roots. Consequently, in 
place of the dimension of $\ah\subset\ut^+$, we deal with the cardinality of $I(\ah)\subset\Delta^+$, etc.
It is proved in \cite{imrn} that there is a one-to-one correspondence between the maximal abelian ideals and 
the long simple roots in $\Delta^+$. As our subsequent results on \MS\ heavily rely on that 
correspondence, we recall the necessary setup.

\un{Some Notation}. We refer to \cite{bour},\cite{hump} for basic results on root systems and Weyl groups. 
Write $(\ ,\ )$ for the $W$-invariant scalar product in $V$ and
$\Delta^+_l$ (resp. $\Delta^+_s$) for the set of long (resp. short) positive roots. In the {\sf ADE} case, all roots are assumed to be both long and short.
\\  \indent
-- \ $\gamma^\vee=2\gamma/(\gamma,\gamma)$ and $\sigma_\gamma\in W$ is the reflection with respect to $\gamma\in\Delta$.
If $\Pi=\{\ap_1,\dots,\ap_n\}$, then we also write $\sigma_i$ in place of $\sigma_{\ap_i}$.
\\  \indent
-- \ $\rho=\frac{1}{2}\sum_{\gamma\in\Delta^+}\gamma$, \ 
$\rho^\vee=\frac{1}{2}\sum_{\gamma\in\Delta^+}\gamma^\vee$, \  
 and $\theta$ is the highest root in $\Delta^+$;
\\  \indent
-- \ $\ell(\cdot)$ is the length function on $W$ with respect to $\{\sigma_\ap\mid \ap\in\Pi\}$;
\\ \indent
-- \ $\fH=\{\gamma\in\Delta^+\mid (\gamma,\theta)\ne 0\}=\{\theta\}\cup
\{\gamma\in\Delta\mid (\gamma,\theta^\vee)=1\}$;
\\ \indent
-- \ If $\gamma=\sum_{\ap\in\Pi} c_\ap \ap$, then $\hot(\gamma)=\sum_{\ap\in\Pi} c_\ap$.
\\ \indent
-- \ The Coxeter number of $\Delta$ is $h=h(\Delta):=\hot(\theta)+1$.
\subsection{}  \label{subs:1.1} 
Let $\Ab$ (resp. $\Abo$) denote the set of all (resp. nonzero) abelian ideals of $\be$.
We regard $\Ab$ as a poset with respect to inclusion.

{\sf 1$^o$.}  There is a natural surjective map $\tau:\Abo\to \Delta^+_l$, and each fibre
$\tau^{-1}(\mu)=:\Ab_\mu$ ($\mu\in\Delta^+_l$) contains a unique maximal and a unique minimal 
ideal~\cite[Theorem\,3.1]{imrn}.
Write $I(\mu)_{max}$ and $I(\mu)_{min}$ for these extreme elements of $\Ab_\mu$. Say that
$I(\mu)_{min}$ (resp. $I(\mu)_{max}$) is the $\mu$-{\it minimal} (resp. $\mu$-{\it maximal}) {\it ideal}.

{\sf 2$^o$.}   The $\mu$-{minimal} ideals admit the following characterisation:

{\it For $I\in \Abo$, we have \ $I=I(\mu)_{min}$ for some $\mu\in\Delta^+_l$ {\sl if and only if\/} $I\subset\fH$~\cite[Theorem\,4.3]{imrn}.}
\\
Furthermore, all other ideals $I\in\Ab_\mu$ have the property that 
$I\cap \fH=I(\mu)_{min}$~\cite[Prop.\,3.2]{jems}. In particular,
$I(\mu)_{max}$ is maximal among all abelian ideals having the prescribed intersection with $\fH$. It is also 
known that $I(\mu)_{min}=I(\mu)_{max}$ if and only if $\mu\in\gH$ (i.e., $(\mu,\theta)\ne 0$), see
\cite[Theorem\,5.1]{imrn}.

{\sf 3$^o$.}  For any $\mu\in\Delta^+_l$, $W$ contains a unique element of minimal length taking 
$\theta$ to $\mu$~\cite[Theorem\,4.1]{imrn}. This element is denoted by $w_\mu$ in \cite{imrn} and here
we write $w_{\theta,\mu}$ for it. Its inverse has the
following description:
\beq   \label{eq:w-inverse}
    \eus N(w_{\theta,\mu}^{\ -1})=\{\nu\in\Delta^+\mid (\nu,\mu^\vee)=-1\} .
\eeq
The $\mu$-minimal ideal can be constructed using $w_{\theta,\mu}$, see \cite[Theorem\,4.2]{imrn}.
 In particular,  $\#I(\mu)_{min}=\ell(w_{\theta,\mu})+1=(\rho,\theta^\vee-\mu^\vee)+1$. See also 
 Lemma~\ref{lm:mu-min} below.

{\sf 4$^o$.}  If $\mu=\ap\in \Delta^+_l\cap \Pi=:\Pi_l$, then $I(\ap)_{max}$ is not only the maximal element of
$\Ab_\ap$, but  is also a (globally) maximal abelian ideal, and this yields the above-mentioned bijection. 
The following is a useful complement to the theory of $\mu$-minimal ideals.

\begin{lm}   \label{lm:mu-min}
For any $\mu\in\Delta^+_l$, we have (1) \ $\eus N(w_{\theta,\mu})\subset \gH\setminus\{\theta\}$ and
\[
   (2)\quad  I(\mu)_{min}=\{\theta\}\cup\{ \theta-\gamma\mid \gamma\in \eus N(w_{\theta,\mu})\} .
\]
\end{lm}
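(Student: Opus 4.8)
The plan is to route everything through the single identity $(\gamma,\theta^\vee)=1$ for $\gamma\in\eus N(w_{\theta,\mu})$, which yields part (1) at once and governs all the arithmetic in part (2). Write $w=w_{\theta,\mu}$ and recall the standard bijection $\gamma\mapsto -w(\gamma)$ of $\eus N(w)$ onto $\eus N(w^{-1})$, together with $\ell(w)=\#\eus N(w)$. For $\gamma\in\eus N(w)$ set $\nu=-w(\gamma)\in\eus N(w^{-1})$, so $(\nu,\mu^\vee)=-1$ by \eqref{eq:w-inverse}. Since $w$ is orthogonal and $w(\theta)=\mu$, I compute $(\gamma,\theta)=(w\gamma,\mu)=-(\nu,\mu)=\tfrac12(\mu,\mu)>0$; as $\mu$ and $\theta$ are both long we have $\|\mu\|=\|\theta\|$, so this reads $(\gamma,\theta^\vee)=1$. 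Hence $\gamma\in\fH$, and $\gamma\ne\theta$ (indeed $w(\theta)=\mu\in\Delta^+$ shows $\theta\notin\eus N(w)$ outright). Using $\gH=\fH$, this is exactly part (1).

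For part (2) I first do the counting. For $\gamma\in\eus N(w)$, the relation $(\gamma,\theta^\vee)=1$ forces $\theta-\gamma\in\Delta$, and since $\theta$ is highest and $\gamma\in\Delta^+$ the difference $\theta-\gamma$ is a positive root $\ne\theta$. As $\gamma\mapsto\theta-\gamma$ is injective, the right-hand side $R:=\{\theta\}\cup\{\theta-\gamma\mid\gamma\in\eus N(w)\}$ has exactly $\#\eus N(w)+1=\ell(w)+1$ elements, which by item $3^o$ of §\ref{subs:1.1} equals $\#I(\mu)_{min}$. It therefore suffices to identify $R$ as a minimal ideal and then match the long root.

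Next I verify that $R$ is a nonzero abelian $\be$-ideal contained in $\fH$. Each element pairs with $\theta^\vee$ to give $2$ (for $\theta$) or $1$ (for $\theta-\gamma$), so $R\subset\fH$ and $R\ne\emptyset$. For abelianness the only case to examine is $(\theta-\gamma)+(\theta-\gamma')$, whose pairing with $\theta^\vee$ is $2$; a root with this value must be $\theta$, i.e.\ $\gamma+\gamma'=\theta$, which is impossible since $w(\gamma+\gamma')\in-\Delta^+$ while $w(\theta)=\mu\in\Delta^+$. For $\be$-stability take $\delta\in R$ and $\eta\in\Delta^+$ with $\delta+\eta\in\Delta$; the case $\delta=\theta$ is vacuous, and for $\delta=\theta-\gamma$ the pairing with $\theta^\vee$ leaves only $(\eta,\theta^\vee)\in\{0,1\}$. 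If $(\eta,\theta^\vee)=1$ then $\delta+\eta=\theta\in R$; if $(\eta,\theta^\vee)=0$ then $\eta\notin\eus N(w)$ by part (1), and writing $\gamma=(\gamma-\eta)+\eta$ with both summands in $\Delta^+$, the co-closedness (biconvexity) of $\eus N(w)$ gives $\gamma-\eta\in\eus N(w)$, whence $\delta+\eta=\theta-(\gamma-\eta)\in R$. Thus $R$ is a nonzero abelian ideal inside $\fH$, so by item $2^o$ we get $R=I(\lambda)_{min}$ for some $\lambda\in\Delta^+_l$.

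It remains to prove $\lambda=\mu$, and this is the step I expect to be the main obstacle: cardinality alone only gives $\ell(w_{\theta,\lambda})=\ell(w_{\theta,\mu})$, which does not pin down $\lambda$. I would close the gap through the bijective correspondence $\mu\leftrightarrow I(\mu)_{min}$ recorded in §\ref{subs:1.1}: the construction of $R$ from $w_{\theta,\mu}$ is exactly the reverse of the passage $\mu\mapsto w_{\theta,\mu}\mapsto I(\mu)_{min}$ of \cite[Theorem~4.2]{imrn}, so that $\tau(R)=\mu$. Heuristically this is visible from $w(R)=\{\mu\}\cup\{\mu+\nu\mid\nu\in\eus N(w^{-1})\}$, whose unique $\curle$-minimal element is $\mu$, identifying the long root attached to $R$. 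Granting this, $R=I(\mu)_{min}$, which is part (2). The cleanest alternative, if one prefers to bypass $\tau$, is to quote the explicit description of $I(\mu)_{min}$ in \cite[Theorem~4.2]{imrn} to obtain the containment $\theta-\gamma\in I(\mu)_{min}$ directly and then conclude from the matching cardinalities.
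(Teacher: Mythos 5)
Part (1) of your proposal is correct and coincides with the paper's argument: both transport the explicit description \eqref{eq:w-inverse} of $\eus N(w_{\theta,\mu}^{\ -1})$ through the orthogonal map $w_{\theta,\mu}$ (which sends $\theta\mapsto\mu$, both long) to obtain $(\gamma,\theta^\vee)=1$ for every $\gamma\in\eus N(w_{\theta,\mu})$.

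In part (2) you take a genuinely different route, and it has a real gap exactly where you flagged it. Your verification that $R=\{\theta\}\cup\{\theta-\gamma\mid \gamma\in\eus N(w_{\theta,\mu})\}$ is a nonzero abelian $\be$-ideal contained in $\gH$ is correct (the convexity of inversion sets for $\be$-stability, and the sum-of-two-negative-roots argument for abelianness, both check out), and item {\sf 2$^o$} of Section~\ref{sect:odin} then yields $R=I(\lambda)_{min}$ for \emph{some} $\lambda\in\Delta^+_l$. But nothing you prove identifies $\lambda$ with $\mu$. The cardinality count gives only $(\rho,\lambda^\vee)=(\rho,\mu^\vee)$, which many long roots can satisfy simultaneously. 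Your heuristic --- that $\mu$ is the unique $\curle$-minimal element of $w_{\theta,\mu}(R)$ --- cannot be promoted to a proof from the facts recorded in Section~\ref{sect:odin}, because the map $\tau:\Abo\to\Delta^+_l$ is never characterized there in such terms; in \cite{imrn} it is defined through the minuscule element of $\widehat W$ attached to an ideal, an object your argument never engages with. Your fallback --- quoting \cite[Theorem\,4.2]{imrn} to obtain $\theta-\gamma\in I(\mu)_{min}$ directly and then matching cardinalities --- would indeed close the gap, but carrying it out \emph{is} the paper's proof: the paper invokes the equivalence \eqref{eq:minuscule}, namely $\gamma\in I(\mu)_{min}\Leftrightarrow \delta-\gamma\in\eus N(w_{\theta,\mu}\sigma_0)$, and computes
\[
\eus N(w_{\theta,\mu}\sigma_0)=\{\delta-\theta\}\cup\{\delta-(\theta-\nu)\mid \nu\in\eus N(w_{\theta,\mu})\},
\]
using $\sigma_0(\nu)=\nu+\ap_0$, which is legitimate precisely because of part (1). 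That single affine computation delivers the containment and the identification of the attached long root simultaneously; with it, your ideal-theoretic verification becomes superfluous, and without it, your argument shows only that $R$ is \emph{a} minimal abelian ideal, not the $\mu$-minimal one.
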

\begin{proof}
(1) \ If $w_{\theta,\mu}(\gamma)=-\nu\in -\Delta^+$, then $\nu\in \eus N(w_{\theta,\mu}^{\ -1})$. Therefore 
$(\nu,\mu^\vee)=-1$ and hence $(-\gamma,\theta^\vee)=-1$. Thus, 
$\gamma\in \gH\setminus\{\theta\}$.
\\ \indent
(2) \ The argument requires an explicit use of the affine root system and affine Weyl group. We give a brief sketch of notation referring to \cite[1.1]{imrn} for a full account. 
\\ \indent 
Let $\widehat\Delta=\{\Delta+k\delta\mid k\in\BZ\}$ be the affine root system and 
$\widehat\Pi=\Pi\cup \{\delta-\theta\}$ the set of affine simple roots. Here $\widehat\Delta$ is a subset of 
the vector space $\widehat V=V\oplus\BR\delta\oplus\BR\lb$, where $(V,\lb)=(V,\delta)=(\lb,\delta)=0$ and 
$(\delta,\lb)=1$. Let $\sigma_0$ denote the reflection (in $\widehat V$) with respect to $\ap_0=\delta-\theta$. 
The affine Weyl group $\widehat W$ is a subgroup of $GL(\widehat V)$ generated by 
$\sigma_\ap,\ (\ap\in\Pi)$ and $\sigma_0$.  Then $w_{\theta,\mu}\sigma_0\in \widehat W$ is minuscule
and its inversion set determines the abelian ideal $I(\mu)_{min}$~\cite[Theorem\,4.2]{imrn}. Namely,
\beq   \label{eq:minuscule}
   \text{ $\gamma\in I(\mu)_{min}$ \ if and only if \ $\delta-\gamma\in \eus N(w_{\theta,\mu}\sigma_0)$ }
\eeq
We have $\eus N(w_{\theta,\mu}\sigma_0)=\{\ap_0\}\cup \sigma_0(\eus N(w_{\theta,\mu}))$ and 
$\eus N(w_{\theta,\mu})\subset \gH\setminus \{\theta\}$. Therefore \\
$\sigma_0(\nu)=\nu-(\nu,\ap_0^\vee)=\nu+\ap_0=\delta-(\theta-\nu)$ for $\nu\in \eus N(w_{\theta,\mu})$.
Thus, 
\[
  \eus N(w_{\theta,\mu}\sigma_0)=\{\delta-\theta\}\cup \{\delta-(\theta-\nu)\mid 
  \nu\in \eus N(w_{\theta,\mu}) \}
\]
and comparing with Eq.~\eqref{eq:minuscule}, we conclude the proof.
\end{proof}

\subsection{} We regard $\Delta^+$ as a poset with respect to the usual order ``$\curge$''. This means that
$\gamma\curge\mu$ if and only if $\gamma-\mu$ is a linear combination of simple roots with {\sl nonnegative\/}  coefficients. If $\Xi\subset\Delta^+$, then $\min(\Xi)$ (resp. $\max(\Xi)$) is the set of minimal
(resp. maximal) elements of $\Xi$ with respect to  ``$\curge$''.

Let $\Pi_l$ (resp. $\Pi_s$) denote the set of long (resp. short) simple roots. We also need the ratio
$r=\|{\rm long}\|^2/\|{\rm short}\|^2$. In the simply-laced case $\Pi=\Pi_l=\Pi_s$ and $r=1$.
If $\gamma=\sum_{\ap\in\Pi} c_\ap\ap\in \Delta^+_l$, then $r$ divides $c_\ap$ whenever $\ap\in\Pi_s$ and
\beq   \label{eq:rho-gamma}
   (\rho,\gamma^\vee)=\sum_{\ap\in\Pi_l}c_\ap+\frac{1}{r}\sum_{\ap\in\Pi_s}c_\ap .
\eeq

\begin{prop}   \label{prop:unique-short}
Suppose that $\gamma,\mu\in\Delta^+_l$ and  $\gamma\curge\mu$.  Then there is a unique 
$w=w_{\gamma,\mu}\in W$ of minimal length such that $w_{\gamma,\mu}(\gamma)=\mu$. In this case, 
$\ell(w_{\gamma,\mu})=(\rho,\gamma^\vee)-(\rho,\mu^\vee)$.
\end{prop}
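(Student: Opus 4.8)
The plan is to establish the three assertions separately and in the following order: (i) a lower bound $\ell(w)\ge(\rho,\gamma^\vee)-(\rho,\mu^\vee)$ holding for \emph{every} $w\in W$ with $w(\gamma)=\mu$; (ii) the existence of one such $w$ attaining this bound (which also yields the length formula); and (iii) the uniqueness of the minimal--length element. The single fact that makes everything work is that $\gamma$ is long: for every root $\beta\ne\pm\gamma$ one has $(\beta,\gamma^\vee)\in\{-1,0,1\}$, whereas $(\gamma,\gamma^\vee)=2$. Indeed $(\beta,\gamma^\vee)=2$ forces $(\beta,\gamma)=(\gamma,\gamma)$, hence $\beta=\gamma$ by Cauchy--Schwarz, since $\|\beta\|\le\|\gamma\|$ for all roots when $\gamma$ is long.

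\emph{Lower bound.} Let $w(\gamma)=\mu$. As $\mu\in\Delta^+$, we have $\gamma\notin\eus N(w)$, so every $\beta\in\eus N(w)$ is positive and distinct from $\gamma$, whence $(\beta,\gamma^\vee)\le1$. Using the identity $\rho-w^{-1}\rho=\sum_{\beta\in\eus N(w)}\beta$ together with $w(\gamma^\vee)=(w\gamma)^\vee=\mu^\vee$, I compute
\[
  (\rho,\gamma^\vee)-(\rho,\mu^\vee)=(\rho-w^{-1}\rho,\gamma^\vee)=\sum_{\beta\in\eus N(w)}(\beta,\gamma^\vee)\le\#\eus N(w)=\ell(w).
\]
Moreover equality holds if and only if $(\beta,\gamma^\vee)=1$ for all $\beta\in\eus N(w)$, i.e. if and only if $\eus N(w)\subseteq\Xi:=\{\beta\in\Delta^+\mid(\beta,\gamma^\vee)=1\}$. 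This equality criterion will drive the uniqueness argument.

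\emph{Existence and the length formula.} Here I use $\gamma\curge\mu$ to build $w$ as a descending chain of simple reflections. The key step is: if $\gamma\curge\mu$, $\gamma\ne\mu$, both long, and $\gamma-\mu=\sum_\alpha k_\alpha\alpha$ with $k_\alpha\ge0$, then some simple $\alpha$ satisfies $\sigma_\alpha(\gamma)\in\Delta^+_l$, $\sigma_\alpha(\gamma)\curge\mu$, and $(\rho,(\sigma_\alpha\gamma)^\vee)=(\rho,\gamma^\vee)-1$. To locate it, note $(\gamma-\mu,\gamma^\vee)=2-(\mu,\gamma^\vee)\ge1$ (as $\mu\ne\gamma$), so $\sum_\alpha k_\alpha(\alpha,\gamma^\vee)\ge1$ with each $(\alpha,\gamma^\vee)\in\{-1,0,1\}$; hence some $\alpha$ has $k_\alpha>0$ and $(\alpha,\gamma^\vee)=1$. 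Because $\sigma_\alpha(\gamma)=\gamma-(\gamma,\alpha^\vee)\alpha$ differs from $\gamma$ only in the $\alpha$-coordinate, the condition $\sigma_\alpha(\gamma)\curge\mu$ is exactly $(\gamma,\alpha^\vee)\le k_\alpha$. If $\alpha\in\Pi_l$ then $(\gamma,\alpha^\vee)=(\alpha,\gamma^\vee)=1\le k_\alpha$; if $\alpha\in\Pi_s$ then $(\gamma,\alpha^\vee)=r$, and $r\mid k_\alpha$ (since $r$ divides the $\Pi_s$-coefficients of every long root, by the remark following \eqref{eq:rho-gamma}), so $k_\alpha\ge r$. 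As $\sigma_\alpha$ permutes $\Delta^+\setminus\{\alpha\}$ and $\gamma\ne\alpha$, the root $\sigma_\alpha(\gamma)$ is positive and long, with $(\rho,(\sigma_\alpha\gamma)^\vee)=(\rho,\gamma^\vee)-(\alpha,\gamma^\vee)=(\rho,\gamma^\vee)-1$. Iterating (and noting that a long root $\curge\mu$ with $(\rho,\cdot^\vee)=(\rho,\mu^\vee)$ must equal $\mu$) produces simple $\alpha_1,\dots,\alpha_d$ with $d=(\rho,\gamma^\vee)-(\rho,\mu^\vee)$ and $\sigma_{\alpha_d}\cdots\sigma_{\alpha_1}(\gamma)=\mu$; this element has length $\le d$, so by the lower bound it has length exactly $d$ and the word is reduced. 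This proves existence and the length formula.

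\emph{Uniqueness (the main obstacle).} The set $\{w\mid w(\gamma)=\mu\}$ is a left coset $w_{\ast}\cdot W'$, where $W'=\mathrm{Stab}_W(\gamma)$ is the reflection subgroup whose root system is $\Delta\cap\gamma^\perp$. By the equality criterion in step (i), any minimal $w$ has $\eus N(w)\subseteq\Xi$, and since $\Xi\cap\gamma^\perp=\emptyset$ this gives $\eus N(w)\cap(\Delta^+\cap\gamma^\perp)=\emptyset$; that is, $w$ is a distinguished minimal--length representative of its coset modulo $W'$. The plan is then to invoke uniqueness of minimal--length coset representatives for reflection subgroups: each coset of $W'$ contains exactly one element $w$ with $\eus N(w)\cap(\Delta^+\cap\gamma^\perp)=\emptyset$. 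The genuine difficulty is that $W'$ need not be a standard parabolic subgroup, so the textbook theory of parabolic cosets does not apply directly; one must instead use the canonical generating set of the reflection subgroup $W'$ and run the exchange argument there, verifying both that length--minimality in the coset is equivalent to the condition $\eus N(w)\cap(\Delta^+\cap\gamma^\perp)=\emptyset$ and that such a representative is unique. Granting this, any two minimal $w,\tilde w$ satisfy $w^{-1}\tilde w\in W'$ and share the same (empty) intersection with $\Delta^+\cap\gamma^\perp$, forcing $w=\tilde w$.
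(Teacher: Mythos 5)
Your lower bound and existence steps are correct and together give the length formula. In fact your lower bound, via the identity $\rho-w^{-1}\rho=\sum_{\beta\in\eus N(w)}\beta$ and the fact that $(\beta,\gamma^\vee)\le 1$ for every $\beta\in\Delta^+\setminus\{\gamma\}$ (here the longness of $\gamma$ enters), is tidier than the paper's, which instead bounds how much $(\rho,\cdot^\vee)$ can drop under one simple reflection and then runs an induction on $(\rho,\gamma^\vee-\mu^\vee)$ with two cases (descending from $\gamma$ or ascending from $\mu$); your chain always descends from $\gamma$ and so avoids the second case, and your by-product -- minimality forces $\eus N(w)\subseteq\Xi=\{\beta\in\Delta^+\mid(\beta,\gamma^\vee)=1\}$ -- is not stated in the paper at all. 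The real divergence is uniqueness. The paper reduces it to a known result: for $\gamma=\theta$ the whole proposition, uniqueness included, is imported from \cite[Section~4]{imrn} (quoted as {\sf 3$^o$} in Section~\ref{subs:1.1}), and for general $\gamma$ any minimal $w$ satisfies $\ell(ww_{\theta,\gamma})\le\ell(w)+\ell(w_{\theta,\gamma})=(\rho,\theta^\vee-\mu^\vee)$, whence $ww_{\theta,\gamma}=w_{\theta,\mu}$ and $w=w_{\theta,\mu}w_{\theta,\gamma}^{-1}$ is pinned down.

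Your uniqueness argument, by contrast, is left conditional, and that is a genuine gap: you ``grant'' exactly the statement that carries the proof, namely that each coset of $W'$ has a unique representative whose inversion set misses $\Sigma:=\Delta^+\cap\gamma^\perp$, and you also silently invoke Steinberg's fixed-point theorem when you identify $\mathrm{Stab}_W(\gamma)$ with the reflection subgroup of $\Delta\cap\gamma^\perp$ (that identification \emph{is} Steinberg's theorem; it is standard and citable, e.g.\ \cite[\S 1.12]{hump}, but it is not a triviality). The good news is that the coset fact needs none of the canonical-generator or exchange-condition machinery you envision; it closes in a few lines. Suppose $w,\tilde w$ are both of minimal length with $w(\gamma)=\tilde w(\gamma)=\mu$. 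By your equality criterion, $\eus N(w),\eus N(\tilde w)\subseteq\Xi$, hence both miss $\Sigma$; equivalently, $w(\Sigma)\subseteq\Delta^+$ and $\tilde w(\Sigma)\subseteq\Delta^+$. Since $v:=w^{-1}\tilde w$ fixes $\gamma$, Steinberg's theorem puts $v$ in the reflection group $W'$ generated by $\{\sigma_\beta\mid \beta\in\Delta\cap\gamma^\perp\}$, whose root system is $\Delta\cap\gamma^\perp$ with positive system $\Sigma$. If $v\ne 1$, then $v$ inverts some root of $\Sigma$ (a nontrivial element of a finite reflection group inverts a positive root of its own system), say $v(\beta)=-\beta'$ with $\beta,\beta'\in\Sigma$; then $\tilde w(\beta)=w(v(\beta))=-w(\beta')\in-\Delta^+$, contradicting $\eus N(\tilde w)\cap\Sigma=\varnothing$. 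Hence $v=1$ and $w=\tilde w$. With this paragraph inserted, your proof is complete and self-contained apart from Steinberg's theorem, and it is a genuinely different (coset-geometric) route from the paper's, which instead leans on the uniqueness of $w_{\theta,\mu}$ proved in \cite[Theorem~4.1]{imrn}.
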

\begin{proof}
{\sf (1)} \ If $\gamma=\theta$ is the highest root, which is always long, then the required properties of
$w_{\theta,\mu}$ are proved in \cite[Section\,4]{imrn}. Therefore, we can use below  $w_{\theta,\gamma}$ and $w_{\theta,\mu}$.

{\sf (2)} \ Since $\|\gamma\|=\|\mu\|$, there is $w\in W$ such that $w(\gamma)=\mu$. For $\ap\in\Pi$, one 
computes that 
$(\rho, (\sigma_\ap(\gamma))^\vee)-(\rho,\gamma^\vee)=(\ap,\gamma^\vee)$. 
Since $(\ap,\gamma^\vee)\le 1$ if $\gamma\ne \ap$, one needs at least $(\rho,\gamma^\vee-\mu^\vee)$ steps (simple reflections) in order to reach
$\mu$ from $\gamma$.
That is, $\ell(w)\ge (\rho,\gamma^\vee-\mu^\vee)$.  

Arguing  by induction on $m=(\rho, \gamma^\vee-\mu^\vee)$, we shall prove that such an element of length 
$(\rho,\gamma^\vee-\mu^\vee)$ does exist.

\textbullet \quad if $m=1$, then it follows from Eq.~\eqref{eq:rho-gamma} that $\gamma$ and $\mu$ differ
only in a unique coordinate, say $c_{\tilde \ap}$, and then $\mu=\sigma_{\tilde\ap}(\gamma)$.

\textbullet \quad Assume that $m>1$ and $\gamma-\mu=\sum_{\ap\in \Pi}d_\ap\ap$ with 
$d_\ap\ge 0$. Then  $(\gamma-\mu,\ap)>0$ for some $\ap\in \Pi$ and hence $d_\ap>0$. Note that
$r$ divides $d_\ap$ whenever $\ap$ is short.

(a) \ If $(\gamma,\ap)>0$, then $(\rho, (\sigma_\ap(\gamma)^\vee))=(\rho,\gamma^\vee)-1$ and
$\gamma\succ \sigma_\ap(\gamma)\curge \mu$.  

(b) \ If $(\mu,\ap)<0$, then $(\rho, (\sigma_\ap(\mu)^\vee))=(\rho,\mu^\vee)+1$ and 
$\gamma\curge \sigma_\ap(\mu) \succ \mu$.
\\
This yields the induction step and existence of $w$ with $\ell(w)=(\rho,\gamma^\vee-\mu^\vee)$.

{\sf (3)} \ If $w(\gamma)=\mu$ and $\ell(w)=(\rho,\gamma^\vee-\mu^\vee)$, then
$ww_{\theta,\gamma}$ takes $\theta$ to $\mu$ and $\ell(ww_{\theta,\gamma})\le \ell(w)+
\ell(w_{\theta,\gamma})=(\rho,\theta^\vee-\mu^\vee)$.  Therefore, $ww_{\theta,\gamma}=w_{\theta,\mu}$ 
and $w=w_{\theta,\mu}w_{\theta,\gamma}^{-1}$ is the unique element of minimal length taking $\gamma$ to
$\mu$. Note also that $\ell(w_{\theta,\mu})=\ell(w_{\gamma,\mu})+\ell(w_{\theta,\gamma})$.
\end{proof}

\begin{rmk}   \label{rem:passage-dual}
1) \ Using the passage to the dual root system, one can get a version of Proposition~\ref{prop:unique-short} for 
$\gamma,\mu\in \Delta^+_s$.  Here $\ell(w_{\gamma,\mu})=(\rho^\vee,\theta-\mu)=\hot(\gamma)-\hot(\mu)$.
\\ \indent
2) \ For our construction of \MS, we only need this Proposition with $\mu\in\Pi_l$.
\\ \indent
3) \ The inversion set $\eus N(w_{\theta,\mu}^{\ -1})$ has an explicit description, see \eqref{eq:w-inverse}.
In general, we have $w_{\theta,\mu}^{\ -1}=w_{\theta,\gamma}^{\ -1}
w_{\gamma,\mu}^{\ -1}$ and 
$\ell(w_{\theta,\mu}^{\ -1})=\ell(w_{\theta,\gamma}^{\ -1})+\ell(w_{\gamma,\mu}^{\ -1})$. Therefore,
$\eus N(w_{\gamma,\mu}^{\ -1})\subset \eus N(w_{\theta,\mu}^{\ -1}) $.
However, there is no such a description for $\eus N(w_{\theta,\mu})$.
\end{rmk}

Let $w_{\gamma,\mu}=\sigma_{i_m}\!\cdots \sigma_{i_1}$ be a reduced expression. Here 
$(\ap_{i_{k}}, \sigma_{i_{k-1}}\!\cdots \sigma_{i_1}(\gamma))>0$ for
$k=1,2,\dots,m$ and $m=(\rho, \gamma^\vee-\mu^\vee)$. The root sequence 
\[
\gamma_0=\gamma, \ \gamma_1=\sigma_{i_1}(\gamma_0), \ \gamma_2=\sigma_{i_2}(\gamma_1), \ \dots, \ 
\gamma_m=\sigma_{i_m}(\gamma_{m-1})=\mu
\]  
is a {\it path of minimal length\/} connecting $\gamma$ and $\mu$. It follows from 
Proposition~\ref{prop:unique-short} that all paths of minimal length yield one and the same element of 
$W$, i.e., provide different reduced expressions for $w_{\gamma,\mu}$. 
If $\ap_{i_k}\in \Pi_l$, then $\gamma_k=\gamma_{k-1}-\ap_{i_k}$; and if 
$\ap_{i_k}\in \Pi_s$, then $\gamma_k=\gamma_{k-1}-r\ap_{i_k}$. (Recall that $\gamma$ is supposed to
be long.) Therefore, the multiplicities of simple reflections occurring in a reduced expression for 
$w_{\gamma,\mu}$ are fully
determined by $\gamma-\mu$. That is, if $\gamma-\mu=\sum_{\ap\in\Pi_l}a_\ap\ap+
\sum_{\beta\in\Pi_s}b_\beta\beta$, then each $\sigma_\ap$ ($\ap\in\Pi_l$) occurs $a_\ap$ times and
$\sigma_\beta$ ($\beta\in\Pi_s$) occurs $b_\beta/r$ times in a reduced expression for $w_{\gamma,\mu}$.

In the {\sf ADE} case, we have $r=1$ and $\gamma_k-\gamma_{k-1}$ is always a simple root. Therefore,
$\gamma_k$ covers $\gamma_{k-1}$ and
$(\gamma_0, \ \gamma_1, \dots, \gamma_m)$ is a true path in the Hasse diagram 
of $(\Delta^+, \curge)$. Since $r=1$, we also have
$(\rho,\gamma^\vee)=\hot(\gamma)$ and $\ell(w_{\gamma,\mu})=\hot(\gamma)-\hot(\mu)$. 

Following C.K.\,Fan~\cite{fan-these}, we give the following
\begin{df}  
An element $w\in W$ is said to be {\it commutative}, if no reduced expression for $w$ contains a 
substring $\sigma_\ap \sigma_\beta \sigma_\ap$, where $\ap$ and $\beta$ are adjacent simple roots.
\end{df}

Every reduced word for $w\in W$ can be obtained from any other
by applying a sequence of braid relations~\cite[Ch.\,4, \S 1.5]{bour}. Since we have noticed above that 
all reduced expressions for $w_{\gamma,\mu}$ have the same distribution of multiplicities of simple 
reflections, the elements of the form $w_{\gamma,\mu}$ are commutative in the simply-laced case. Below, we provide a more elementary proof of this assertion. 

\begin{lm}   \label{lm:commut-elem}
Suppose that $\Delta$ is simply-laced. Then
for all $\gamma,\mu\in\Delta^+$ such that $\gamma\curge\mu$, the element $w_{\gamma,\mu}$ is commutative.
\end{lm}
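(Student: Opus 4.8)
The plan is to verify the defining property of commutativity directly along the root path attached to a reduced expression, rather than through the braid‑move/multiplicity argument sketched just above the lemma. By definition it suffices to show that no reduced expression $w_{\gamma,\mu}=\sigma_{i_m}\cdots \sigma_{i_1}$ contains a substring $\sigma_\ap\sigma_\beta\sigma_\ap$ with $\ap,\beta$ adjacent simple roots. Since $\gamma\curge\mu$ and $\Delta$ is simply-laced, I may take $\gamma,\mu\in\Delta^+$ (all roots being both long and short here), so the machinery of paths of minimal length set up immediately before the statement applies verbatim.

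The starting point is the observation recorded before the lemma: for \emph{every} reduced expression the associated root sequence $\gamma_0=\gamma$, $\gamma_k=\sigma_{i_k}(\gamma_{k-1})$ is a path of minimal length. Concretely, one has $(\ap_{i_k},\gamma_{k-1})>0$, whence $(\gamma_{k-1},\ap_{i_k}^\vee)=1$ (as $\gamma_{k-1}\ne\ap_{i_k}$, the two roots having the same length), and therefore $\gamma_k=\gamma_{k-1}-\ap_{i_k}$ is again a positive root, covered by $\gamma_{k-1}$. Thus reading a reduced word off $w_{\gamma,\mu}$ amounts to descending a saturated chain in $(\Delta^+,\curge)$ by simple roots.

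Now suppose, for contradiction, that the substring $\sigma_\ap\sigma_\beta\sigma_\ap$ occurs in positions $k,k+1,k+2$, so that $\ap_{i_k}=\ap_{i_{k+2}}=\ap$ and $\ap_{i_{k+1}}=\beta$ with $\ap,\beta$ adjacent. The first two descents give $\gamma_k=\gamma_{k-1}-\ap$ and then $\gamma_{k+1}=\gamma_k-\beta=\gamma_{k-1}-\ap-\beta$. The third descent requires $(\ap,\gamma_{k+1})>0$, i.e. $(\gamma_{k+1},\ap^\vee)>0$. But
\[
   (\gamma_{k+1},\ap^\vee)=(\gamma_{k-1},\ap^\vee)-(\ap,\ap^\vee)-(\beta,\ap^\vee)=1-2-(-1)=0,
\]
using $(\gamma_{k-1},\ap^\vee)=1$ from the first descent and $(\beta,\ap^\vee)=-1$ for adjacent roots in the simply-laced case. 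This contradicts $(\gamma_{k+1},\ap^\vee)>0$, so no such substring can occur and $w_{\gamma,\mu}$ is commutative.

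The only point requiring care — and the step I expect to be the main obstacle — is the justification that the root-path description applies to an \emph{arbitrary} reduced expression of $w_{\gamma,\mu}$, not merely to the distinguished one read off a preselected minimal path. This is exactly where the minimality $\ell(w_{\gamma,\mu})=\hot(\gamma)-\hot(\mu)$ from Proposition~\ref{prop:unique-short} is essential: it forces each of the $m$ reflections to lower the height by exactly one and keeps every intermediate $\gamma_k$ positive, so that $(\ap_{i_k},\gamma_{k-1})>0$ holds at each step. Once that is in hand, the argument is purely local and turns on the single inner product $(\beta,\ap^\vee)=-1$ characterizing adjacency in the ADE case.
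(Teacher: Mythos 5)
Your proof is correct and is essentially the paper's own argument: the paper likewise assumes a substring $\sigma_\ap\sigma_\beta\sigma_\ap$ occurs in a reduced expression, reads off the corresponding sub-path $\nu,\ \nu-\ap,\ \nu-\ap-\beta$ of a path of minimal length, and derives the same contradiction from the computation $(\nu-\ap-\beta,\ap)=1-2+1=0$. The point you flag as the main obstacle --- that \emph{every} reduced expression of $w_{\gamma,\mu}$ yields a minimal path with $(\ap_{i_k},\gamma_{k-1})>0$ at each step, forced by $\ell(w_{\gamma,\mu})=\hot(\gamma)-\hot(\mu)$ --- is exactly the fact the paper records (with the same brevity) in the paragraph preceding the lemma, so your reliance on it matches the paper's level of detail.
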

\begin{proof}
Assume not, and let $\sigma_\ap \sigma_\beta \sigma_\ap$ be a substring of a reduced expression for 
$w_{\gamma,\mu}$, with adjacent $\ap,\beta\in\Pi$. Then a path of minimal length connecting $\gamma$ and $\mu$  contains a sub-path
\[
   \dots, \ \nu, \ \sigma_\ap(\nu)=\nu-\ap, \ \sigma_\beta(\nu-\ap)=\nu-\ap-\beta, \ \nu-2\ap-\beta , \ \dots
\]
Here $(\ap,\beta)=-1$, $(\nu,\ap)=1$, and $(\nu-\ap,\beta)=1$. Then
$(\nu-\ap-\beta,\ap)=0$, which implies that $\nu-2\ap-\beta\not\in\Delta$. A contradiction!
\end{proof}
\begin{rmk}
Elements $w_{\gamma,\mu}$ are not always commutative outside the {\sf ADE}-realm. For instance, if $\Delta$
is of type $\GR{F}{4}$ and $\Pi=\{\ap_1,\dots,\ap_4\}$, with numbering as in \cite{t41}, then 
$w=\sigma_3\sigma_2\sigma_3\sigma_4$ is the shortest element taking $\theta=2\ap_1+4\ap_2+3\ap_3+2\ap_4$ to $\mu=2\ap_1+2\ap_2+\ap_3+\ap_4$.
\\ \indent
In general, we can prove that the elements of the form $w_{\gamma,\mu}$, where $\gamma\curge\mu$ and $\|\gamma\|=\|\mu\|$, are {\it fully commutative} in the sense of Stembridge \cite{stembr}. But this is not 
needed here.   
\end{rmk}
In the simply-laced case, 
the following characterisation of commutative elements is given in~\cite[Theorem\,2.4]{FS}:
\beq  \label{eq:svoistvo}
\text{\it $w\in W$ is commutative if and only if for all $\gamma,\gamma'\in \eus N(w)$, one has $(\gamma,\gamma')\ge 0$.}
\eeq

Given $w\in W$, we say that $w'$ is a {\it left factor\/} of $w$, if $\ell(w'^{-1}w)=\ell(w)-\ell(w')$.
In other words, $w=w'w''$ and $\ell(w)=\ell(w')+\ell(w'')$. Similarly, $w''$ is a {\it right factor\/} of $w$, if 
$\ell(ww''^{-1})=\ell(w)-\ell(w'')$. If  $w=w'w''$, then $w'$ is a {left factor\/} of $w$ if and only if 
$w''$ is a {right factor\/} of $w$. In that case, $\eus N(w)=\eus N(w'')\sqcup (w'')^{-1}\eus N(w')$. In
particular, $\eus N(w'')\subset \eus N(w)$.
 
\begin{prop}   \label{lm:lezhit-nizhe}
Suppose that $w$ is commutative and $w'$ is a {left factor} of $w$. Then 
$w'$ is commutative, too, and
$\eus N(w')$ is ``located below'' $\eus N(w)$ with respect to ``$\curge$''. The latter means that, for any $\gamma'\in \eus N(w')$, there exists
$\gamma\in\eus N(w)$ such that $\gamma'\curle\gamma$. 
\end{prop}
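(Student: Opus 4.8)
The plan is to treat the two assertions separately: first the commutativity of the left factor $w'$, then the ``located below'' property. For the commutativity of $w'$ I would argue by contraposition at the level of reduced words. Writing $w=w'w''$ with $\ell(w)=\ell(w')+\ell(w'')$, any reduced expression for $w'$ extends, by appending a reduced expression for $w''$, to a word of length $\ell(w')+\ell(w'')=\ell(w)$ representing $w$, hence to a reduced expression for $w$. Consequently, a forbidden substring $\sigma_\ap\sigma_\beta\sigma_\ap$ (with $\ap,\beta$ adjacent) occurring in some reduced word for $w'$ would also occur in a reduced word for $w$, contradicting the commutativity of $w$. This part is routine and uses nothing beyond the length-additivity built into the notion of a left factor.

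For the second assertion I would reduce to a single simple reflection and then iterate. The key one-step lemma is: if $u$ is commutative and $u=u'\sigma_\ap$ with $\ell(u)=\ell(u')+1$, then for every $\gamma'\in\eus N(u')$ one has $\sigma_\ap(\gamma')\in\eus N(u)$ and $\sigma_\ap(\gamma')\curge\gamma'$. To prove it, I first note that $\ap\in\eus N(u)$, since $u(\ap)=u'(-\ap)=-u'(\ap)<0$ (the hypothesis $\ell(u'\sigma_\ap)>\ell(u')$ forces $u'(\ap)\in\Delta^+$), and that the decomposition $\eus N(u)=\eus N(\sigma_\ap)\sqcup\sigma_\ap^{-1}\eus N(u')=\{\ap\}\sqcup\sigma_\ap(\eus N(u'))$ puts $\sigma_\ap(\gamma')$ into $\eus N(u)$. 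Now both $\ap$ and $\sigma_\ap(\gamma')$ lie in $\eus N(u)$, so commutativity and the characterisation \eqref{eq:svoistvo} give $(\ap,\sigma_\ap(\gamma'))\ge 0$; since $\sigma_\ap$ preserves the form, $(\ap,\sigma_\ap(\gamma'))=(\sigma_\ap(\ap),\gamma')=-(\ap,\gamma')$, whence $(\ap,\gamma')\le 0$. Therefore $\sigma_\ap(\gamma')=\gamma'-(\gamma',\ap^\vee)\ap\curge\gamma'$, because $(\gamma',\ap^\vee)$ has the same sign as $(\gamma',\ap)$.

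To finish, I would chain this lemma along a reduced expression $w''=\sigma_{b_1}\cdots\sigma_{b_p}$. Setting $w_{(t)}=w'\sigma_{b_1}\cdots\sigma_{b_t}$, each $w_{(t)}$ is a prefix of a reduced word for $w$, hence is a left factor of $w$ (so commutative by the first part) and $w_{(t)}=w_{(t-1)}\sigma_{b_t}$ with $\ell(w_{(t)})=\ell(w_{(t-1)})+1$. Starting from $\delta_0=\gamma'\in\eus N(w')$ and putting $\delta_t=\sigma_{b_t}(\delta_{t-1})$, the one-step lemma applied to the pair $(w_{(t-1)},w_{(t)})$ shows inductively that $\delta_t\in\eus N(w_{(t)})$ and $\delta_{t-1}\curle\delta_t$. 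Transitivity then yields $\gamma'=\delta_0\curle\delta_1\curle\cdots\curle\delta_p=(w'')^{-1}(\gamma')$, and $\delta_p\in\eus N(w_{(p)})=\eus N(w)$ is the desired $\gamma$.

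I expect the one-step lemma to be the crux. The sign inequality $(\ap,\gamma')\le 0$ is exactly where commutativity of $w$ is indispensable: a bare simple reflection can send a positive root strictly downward, and it is commutativity (through \eqref{eq:svoistvo} together with the fact that $\ap$ and $\sigma_\ap(\gamma')$ are \emph{both} inversions of $w$) that forces each reflection in the chain to move roots up rather than down. The only remaining care is the bookkeeping ensuring that $\delta_{t-1}$ genuinely lies in $\eus N(w_{(t-1)})$ at every stage, so that the decomposition formula applies; this is guaranteed by the length-additivity of the chosen reduced word for $w''$.
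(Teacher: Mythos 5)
Your proposal is correct and follows essentially the same route as the paper: the one-step lemma (both $\ap$ and $\sigma_\ap(\gamma')$ lie in $\eus N(u)$, so \eqref{eq:svoistvo} forces $(\ap,\gamma')\le 0$ and hence $\sigma_\ap(\gamma')\curge\gamma'$) is exactly the paper's base case $\ell(w'')=1$, and your chaining along a reduced word for $w''$ is the paper's induction on $\ell(w'')$ written out explicitly with the intermediate roots $\delta_t$ tracked. The paper leaves the commutativity of $w'$ as ``obvious''; your reduced-word justification of it is the intended argument.
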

\begin{proof}
The commutativity of $w'$ is obvious. 

\textbullet \quad If $\ell(w'')=1$, then $w=w'\sigma_\ap$ for some $\ap\in\Pi$.
Here $\eus N(w)=\sigma_\ap(\eus N(w'))\cup\{\ap\}$.
Take any $\mu'\in \eus N(w')$. Then $\mu=\sigma_\ap(\mu')\in \eus N(w)$ and $(\ap,\mu)\ge 0$ in view of 
Eq.~\eqref{eq:svoistvo}. Therefore, $\mu'=\sigma_\ap(\mu)\curle \mu$.

\textbullet \quad In general, we argue by downward induction on $\ell(w'')$. If $\ell(w'')>1$, then 
$w''=u\sigma_\ap$, where $\ell(u)=\ell(w'')-1$. Now, $w=(w'u)\sigma_\ap$ and the previous argument shows
that  $\eus N(w'u)$ is located below $\eus N(w)$. By the induction assumption, $\eus N(w')$ is located below
$\eus N(w'u)$, and we are done.
\end{proof}

\begin{ex}  \label{ex:left-factor}
We are going to apply the above proposition in the following situation. Suppose that 
$\gamma\succ \mu\succ\nu$ are long roots. Then $w_{\gamma,\nu}=w_{\mu,\nu}w_{\gamma,\mu}$ and 
$w_{\mu,\nu}$ is a left factor of $w_{\gamma,\nu}$. In the simply-laced case, all these elements are commutative and we conclude that $\eus N(w_{\mu,\nu})$ is located below $\eus N(w_{\gamma,\nu})$.
\end{ex}

\section{\MS\ associated with a maximal abelian ideal}
\label{sect:dva}

\noindent 
In this section, we fix  $\ap\in\Pi_l$ and consider the corresponding maximal abelian ideal
$I(\ap)_{max}\in \Ab_\ap$. Eventually, we stick to the {\sf ADE} case (i.e., $\Pi=\Pi_l$), but 
Lemmas~\ref{lm:N(w_theta,ap)} and \ref{lm:summa}  are valid in the general setting.
Recall that $I(\ap)_{min}=I(\ap)_{max}\cap \gH$.  
In the special case $\mu=\ap$, there is the following complement  
to Lemma~\ref{lm:mu-min}.

\begin{lm}    \label{lm:N(w_theta,ap)}
For  $\ap\in\Pi_l$, one has  
$\eus N(w_{\theta,\ap})=\gH\setminus I(\ap)_{max}=\gH\setminus I(\ap)_{min}$. In other words, 
$\gH=I(\ap)_{min}\sqcup \eus N(w_{\theta,\ap})$. Furthermore, $\mu\in \eus N(w_{\theta,\ap})$ if and only if
$\theta-\mu\in I(\ap)_{min}\setminus \{\theta\}$.
\end{lm}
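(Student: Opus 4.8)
The plan is to deduce everything from Lemma~\ref{lm:mu-min}(2) specialised to $\mu=\ap$, combined with a fixed-point-free involution on $\gH\setminus\{\theta\}$ and a length count. First I would introduce the map $\iota\colon\gH\setminus\{\theta\}\to\gH\setminus\{\theta\}$, $\iota(\gamma)=\theta-\gamma$, and check it is a well-defined fixed-point-free involution. Indeed, for $\gamma\in\gH\setminus\{\theta\}$ one has $(\gamma,\theta^\vee)=1$, so $\sigma_\theta(\gamma)=\gamma-\theta$ and hence $\theta-\gamma=-\sigma_\theta(\gamma)$ is a root; it is positive since $\gamma\curle\theta$, satisfies $(\theta-\gamma,\theta^\vee)=2-1=1$, and differs from $\theta$, so it again lies in $\gH\setminus\{\theta\}$. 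As no root is half of another, $\iota$ has no fixed point, and $\gH\setminus\{\theta\}$ splits into two-element $\iota$-orbits. With this notation, Lemma~\ref{lm:mu-min}(2) reads $I(\ap)_{min}\setminus\{\theta\}=\iota\bigl(\eus N(w_{\theta,\ap})\bigr)$; since $\iota$ is an involution of $\gH\setminus\{\theta\}$ and, by Lemma~\ref{lm:mu-min}(1), $\eus N(w_{\theta,\ap})\subset\gH\setminus\{\theta\}$, this is equivalent to $\eus N(w_{\theta,\ap})=\iota\bigl(I(\ap)_{min}\setminus\{\theta\}\bigr)$, and it already yields the ``Furthermore'' assertion: $\mu\in\eus N(w_{\theta,\ap})$ iff $\theta-\mu=\iota(\mu)\in I(\ap)_{min}\setminus\{\theta\}$.

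Next I would prove the disjointness $\eus N(w_{\theta,\ap})\cap I(\ap)_{min}=\varnothing$. If some $\beta$ and $\theta-\beta$ both lay in $I(\ap)_{min}$, then, because $I(\ap)_{min}$ is abelian, their sum $\theta$ could not be a root --- a contradiction. Hence each $\iota$-orbit $\{\gamma,\theta-\gamma\}$ contains at most one element of $I(\ap)_{min}$, and $\eus N(w_{\theta,\ap})=\iota\bigl(I(\ap)_{min}\setminus\{\theta\}\bigr)$ is disjoint from $I(\ap)_{min}$. Together with $\eus N(w_{\theta,\ap})\subset\gH\setminus\{\theta\}$, this gives the inclusion $\eus N(w_{\theta,\ap})\subset\gH\setminus I(\ap)_{min}$.

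The reverse inclusion --- that every $\iota$-orbit actually meets $I(\ap)_{min}$ --- is the main point, and I expect to obtain it from a cardinality count rather than a root-by-root argument. On one side, $\#\eus N(w_{\theta,\ap})=\ell(w_{\theta,\ap})=\#\bigl(I(\ap)_{min}\setminus\{\theta\}\bigr)$ by the length identity recalled in Subsection~\ref{subs:1.1}. On the other side, pairing $2\rho=\sum_{\gamma\in\Delta^+}\gamma$ against $\theta^\vee$ and using that $(\gamma,\theta^\vee)$ equals $2$ for $\gamma=\theta$, equals $1$ on $\gH\setminus\{\theta\}$, and vanishes on the remaining positive roots, gives $\#\gH=2(\rho,\theta^\vee)-1$. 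Since $\ap$ is simple, $(\rho,\ap^\vee)=1$, whence $2\ell(w_{\theta,\ap})=2(\rho,\theta^\vee-\ap^\vee)=2(\rho,\theta^\vee)-2=\#\gH-1=\#(\gH\setminus\{\theta\})$. Therefore the two disjoint subsets $\eus N(w_{\theta,\ap})$ and $I(\ap)_{min}\setminus\{\theta\}$ of $\gH\setminus\{\theta\}$ together exhaust it; equivalently $\gH=I(\ap)_{min}\sqcup\eus N(w_{\theta,\ap})$ and $\eus N(w_{\theta,\ap})=\gH\setminus I(\ap)_{min}$, where I used $\theta\in I(\ap)_{min}$ to pass between $\gH\setminus\{\theta\}$ and $\gH$.

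Finally, the remaining equality $\gH\setminus I(\ap)_{max}=\gH\setminus I(\ap)_{min}$ is immediate from the fact recalled in Subsection~\ref{subs:1.1} that $I(\ap)_{max}\cap\gH=I(\ap)_{min}$. The only genuinely delicate step is the counting identity $2\ell(w_{\theta,\ap})=\#\gH-1$; once the value-set of $(\gamma,\theta^\vee)$ on positive roots and the relation $(\rho,\ap^\vee)=1$ are in place, it is routine, and the rest is bookkeeping with the involution $\iota$.
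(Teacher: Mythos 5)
Your proposal is correct and takes essentially the same route as the paper's own proof: both hinge on Lemma~\ref{lm:mu-min}, the count $\#\gH=2(\rho,\theta^\vee)-1$ combined with $\ell(w_{\theta,\ap})=(\rho,\theta^\vee)-1$, disjointness via abelianness of $I(\ap)_{min}$ (since $\nu+(\theta-\nu)=\theta$ is a root), and the identity $I(\ap)_{max}\cap\gH=I(\ap)_{min}$. Your involution $\iota(\gamma)=\theta-\gamma$ is just a formal packaging of the paper's closing observation that $\gH\setminus\{\theta\}$ is a union of pairs $\{\mu,\theta-\mu\}$, so no genuinely new idea or gap is involved.
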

\begin{proof}
By \cite[Sect.\,4]{imrn}, we have $\# I(\ap)_{min}=\ell(w_{\theta,\ap})+1=(\rho,\theta^\vee)$
(see also Lemma~\ref{lm:mu-min}(2)). On the other
hand, 
\[
    (2\rho,\theta^\vee)=\sum_{\gamma\in\Delta^+}(\gamma,\theta^\vee)=
    \#\{\gamma\mid (\gamma,\theta^\vee)=1\}+(\theta,\theta^\vee)=(\#\gH-1)+2=\#\gH+1 .
\]
Therefore $\# I(\ap)_{min}+ \#\eus N(w_{\theta,\ap})= 2(\rho,\theta^\vee)-1=\#\gH$. Since both 
$I(\ap)_{min}$ and $\eus N(w_{\theta,\ap})$ lies in $\gH$, comparing with Lemma~\ref{lm:mu-min} shows that the only possibility is that $\gH$ is a disjoint union of $I(\ap)_{min}$ and $\eus N(w_{\theta,\ap})$.
Indeed, if $\nu\in I(\ap)_{min}\cap\eus N(w_{\theta,\ap})$, then  $\theta-\nu\in I(\ap)_{min}$, which 
contradicts the fact that $I(\ap)_{min}$ is abelian.
\\ \indent
The last assertion stems from the fact that $\gH\setminus \{\theta\}$ is the union of pairs of the form
$\{\mu, \theta-\mu\}$.
\end{proof}

\begin{lm}  \label{lm:summa}
For any $\mu'\in \Delta^+\setminus I(\ap)_{max}$, there exists $\mu\in I(\ap)_{min}$ such that $\mu+\mu'$
is a root; and thereby $\mu+\mu'\in I(\ap)_{min}$.
\end{lm}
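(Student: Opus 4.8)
The plan is to dispose of the ``thereby'' clause first and then establish existence of $\mu$ by splitting on the value of $(\mu',\theta)$. For the clause: once $\mu\in I(\ap)_{min}$ satisfies $\mu+\mu'\in\Delta$, note $\mu\ne\theta$ (else $\theta+\mu'$ would be a root above $\theta$), so $(\mu,\theta^\vee)=1$; since $\mu'\in\Delta^+$ and $\theta$ is highest, $(\mu',\theta^\vee)\ge 0$, whence $(\mu+\mu',\theta^\vee)\ge 1>0$, i.e. $\mu+\mu'\in\gH$. As $\mu\in I(\ap)_{max}$ and $I(\ap)_{max}$ is $\be$-stable, $\mu+\mu'\in I(\ap)_{max}$; together with $I(\ap)_{min}=I(\ap)_{max}\cap\gH$ this gives $\mu+\mu'\in I(\ap)_{min}$.

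For existence, suppose first $(\mu',\theta)\ne 0$, i.e. $\mu'\in\gH$. Since $\theta\in I(\ap)_{min}\subseteq I(\ap)_{max}$ we have $\mu'\ne\theta$, and since $\mu'\notin I(\ap)_{max}\supseteq I(\ap)_{min}$, the decomposition $\gH=I(\ap)_{min}\sqcup\eus N(w_{\theta,\ap})$ of Lemma~\ref{lm:N(w_theta,ap)} forces $\mu'\in\eus N(w_{\theta,\ap})$. By the last assertion of that lemma $\theta-\mu'\in I(\ap)_{min}\setminus\{\theta\}$, so $\mu=\theta-\mu'$ works, with $\mu+\mu'=\theta\in\Delta$.

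The remaining, and main, case is $(\mu',\theta)=0$. Here I would pass to the $5$-term grading $\g=\bigoplus_{i=-2}^{2}\g_i$ given by the eigenvalues of $(\cdot,\theta^\vee)$, so that $\g_2=\g_\theta$, $\g_1=\bigoplus_{\gamma\in\gH\setminus\{\theta\}}\g_\gamma$, and $\g_0$ is the centralizer of $\theta^\vee$; recall $\g_{\ge 1}=\g_1\oplus\g_2$ is Heisenberg, so the bracket induces a nondegenerate symplectic form $\omega$ on $\g_1$ valued in $\g_2$. Since positive roots satisfy $(\gamma,\theta^\vee)\ge 0$, the abelian ideal $\ah:=I(\ap)_{max}$ lies in $\g_0\oplus\g_1\oplus\g_2$; write $M=\ah\cap\g_1=\bigoplus_{\mu\in I(\ap)_{min}\setminus\{\theta\}}\g_\mu$ and $\ce=\ah\cap\g_0\subseteq\n_0^+$. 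Abelianness makes $M$ $\omega$-isotropic, and the counts $\#I(\ap)_{min}=(\rho,\theta^\vee)$ and $\#\gH=2(\rho,\theta^\vee)-1$ give $\dim M=\tfrac12\dim\g_1$, so $M$ is Lagrangian. In this language the claim (contrapositive) reads: if $\mu'\in\Delta^+$ is orthogonal to $\theta$ and $e_{\mu'}$ annihilates $M$ (equivalently $\mu+\mu'\notin\Delta$ for all $\mu\in I(\ap)_{min}$), then $\mu'\in\ah$.

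To prove this I would use the maximality of $\ah$. The key point is that every $x\in\g_0$ acts on $\g_1$ as an element of $\mathfrak{sp}(\g_1,\omega)$; if in addition $x$ kills the Lagrangian $M$, then $\omega(xv,m)=-\omega(v,xm)=0$ for all $v\in\g_1,m\in M$, so $x\g_1\subseteq M^\perp=M$. Hence $\ce_{\max}:=\{x\in\n_0^+\mid[x,M]=0\}$ (a sum of root spaces, being $\te$-stable) satisfies $[\ce_{\max},\g_1]\subseteq M$, and one checks directly that $\ah^{+}:=\ce_{\max}\oplus M\oplus\g_2$ is again a $\be$-stable abelian ideal — the only delicate points being $[\ce_{\max},\g_1]\subseteq M$ (just shown) and $[\ce_{\max},\ce_{\max}]=0$, which holds because two such $x,x'$ map $\g_1\to M\to 0$, so $[x,x']$ acts trivially on $\g_1$ and vanishes as $\g_0$ acts faithfully on $\g_1$. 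Since $\ah\subseteq\ah^{+}$ and $\ah$ is a globally maximal abelian ideal, $\ah=\ah^{+}$, so $\ce=\ce_{\max}$; thus $e_{\mu'}$ killing $M$ gives $\g_{\mu'}\subseteq\ce_{\max}=\ce\subseteq\ah$, as required. I expect this orthogonal case to be the main obstacle: unlike the case $\mu'\in\gH$, where $\mu=\theta-\mu'$ is explicit, here one must characterize $\ah\cap\g_0$ intrinsically, and the crux is the identity $\ce=\ce_{\max}$, which rests on the Lagrangian property of $M$ and the faithfulness of the $\g_0$-action on $\g_1$ — the two facts I would pin down carefully.
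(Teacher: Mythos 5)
Your argument is correct in substance, but it takes a genuinely different route from the paper's. The paper's proof is purely combinatorial, a downward induction in the poset $(\Delta^+,\curge)$: by \cite[Theorem\,4.7]{jems}, every \emph{maximal} element $\mu'$ of $\Delta^+\setminus I(\ap)_{max}$ lies in $\gH$ and satisfies $\theta-\mu'\in I(\ap)_{min}$; and a maximal counterexample $\nu$ cannot exist, since some $\nu+\beta$ ($\beta\in\Pi$) still lies in $\Delta^+\setminus I(\ap)_{max}$ and admits $\mu\in I(\ap)_{min}$ with $\nu+\beta+\mu\in\Delta$, whence either $\nu+\mu\in\Delta$ (done) or $\beta+\mu\in I(\ap)_{min}$ and $\nu+(\beta+\mu)\in\Delta$ (done again). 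Your case $(\mu',\theta)\ne 0$ is essentially the paper's first step, derived from Lemma~\ref{lm:N(w_theta,ap)} rather than from the citation; but in the case $(\mu',\theta)=0$ you replace the induction by a structural argument built on the Heisenberg grading attached to $\theta$, the Lagrangian property of $M$, and — crucially — the \emph{global} maximality of $I(\ap)_{max}$ among abelian ideals, an input the paper's induction never uses. What your route buys is a sharper structural fact (the degree-zero part of the ideal is exactly the annihilator of the Lagrangian $M$ inside $\ut^+\cap\g_0$); what the paper's route buys is brevity and independence from any Lie-algebra-level considerations. Both proofs work for arbitrary, not necessarily simply-laced, $\Delta$, as the lemma requires. (Incidentally, the ``thereby'' clause is immediate from the fact that $I(\ap)_{min}$ is itself a $\be$-stable ideal; your detour through $I(\ap)_{max}\cap\gH=I(\ap)_{min}$ is correct but unnecessary.)

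Two of your assertions need repair, though both are fixable. First, it is \emph{not} true that every $x\in\g_0$ acts on $\g_1$ symplectically: for $h\in\te$ one has $\omega(hv,w)+\omega(v,hw)=(\theta,h)\,\omega(v,w)$, so Cartan elements with $(\theta,h)\ne0$ only scale $\omega$. What you actually use, and what is true, is that any $x$ killing $\g_2$ acts symplectically (Jacobi identity), and every $x\in\ut^+\cap\g_0$ kills $\g_2$ because $\theta+\nu\notin\Delta$ for $\nu\in\Delta^+$. Second, ``$\g_0$ acts faithfully on $\g_1$'', as literally stated, can fail (the Cartan part need not act faithfully); what your argument needs is only that $y\in\ut^+\cap\g_0$ with $[y,\g_1]=0$ vanishes, and this requires a proof you omit: such $y$ also kills $\g_2$, hence by invariance of the Killing form $\kappa$ (namely $\kappa([y,\g_{-i}],\g_i)=-\kappa(\g_{-i},[y,\g_i])=0$, together with nondegeneracy of the pairing $\g_{-i}\times\g_i\to\BC$) it kills $\g_{-1}$ and $\g_{-2}$ as well; therefore $y$ centralizes the generators of the subalgebra generated by $\bigoplus_{i\ne 0}\g_i$, which is a nonzero graded ideal of $\g$ and hence equals $\g$ by simplicity, forcing $y=0$. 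With these two corrections your proof is complete.
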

\begin{proof}
If $\mu'\in \max(\Delta^+\setminus I(\ap)_{max})$, then $\mu'\in\gH$ and $\mu=\theta-\mu'\in
I(\ap)_{min}$~\cite[Theorem\,4.7]{jems}. That is, the required property holds for the maximal elements of
$\Delta^+\setminus I(\ap)_{max}$. 

Assume that the required property is not satisfied for all of $\Delta^+\setminus I(\ap)_{max}$.
Let $\nu\in \Delta^+\setminus I(\ap)_{max}$ be a maximal root among those that do not have the required property. Since  $\nu\not\in \max(\Delta^+\setminus I(\ap)_{max})$, there is $\beta\in\Pi$ such that
$\nu+\beta\in \Delta^+\setminus I(\ap)_{max}$.   Then $\nu+\beta$ has the required property and one
can find $\mu\in I(\ap)_{min}$ such that
$(\nu+\beta)+\mu$ is a root. Here one easily proves that $\nu+\mu$ or $\beta+\mu$ is a root. In the 
second case, one obtains $\beta+\mu\in I(\ap)_{min}$. Therefore, both possibilities contradict  the 
assumption on $\nu$.   Thus, all roots in  $\Delta^+\setminus I(\ap)_{max}$ must have the required property. 
\end{proof}

For any $\gamma\in I(\ap)_{max}$, one has $\gamma\curge\ap$ \cite[Theorem\,3.5]{jems}.
Therefore, using Proposition~\ref{prop:unique-short}, we obtain the shortest element 
$w_{\gamma,\ap}$ for each {\sl long\/} $\gamma$. 
\\ \indent
In the rest of the section, we assume that $\Delta$ is simply-laced.

\begin{lm}  \label{lm:lezhit-vne-ideala}
For any $\gamma\in I(\ap)_{max}$, we have $\eus N(w_{\gamma,\ap})\subset  
\Delta^+\setminus I(\ap)_{max}$.
\end{lm}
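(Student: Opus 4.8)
The plan is to show that no inversion root of $w_{\gamma,\ap}$ can lie in the ideal $I(\ap)_{max}$, exploiting the fact that $w_{\gamma,\ap}$ sits as a \emph{left factor} of the well-understood element $w_{\theta,\ap}$. First I would dispose of the degenerate cases: if $\gamma=\ap$ then $w_{\gamma,\ap}=e$ and $\eus N(w_{\gamma,\ap})=\varnothing$, while if $\gamma=\theta$ then $w_{\gamma,\ap}=w_{\theta,\ap}$ and $\eus N(w_{\theta,\ap})=\gH\setminus I(\ap)_{max}$ by Lemma~\ref{lm:N(w_theta,ap)}, which is already disjoint from $I(\ap)_{max}$. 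So assume $\theta\succ\gamma\succ\ap$, all long (automatic in the {\sf ADE} case), so that $w_{\gamma,\ap}$ is defined via Proposition~\ref{prop:unique-short}. By part~(3) of that proposition we have $w_{\theta,\ap}=w_{\gamma,\ap}\,w_{\theta,\gamma}$ with $\ell(w_{\theta,\ap})=\ell(w_{\gamma,\ap})+\ell(w_{\theta,\gamma})$, so $w_{\gamma,\ap}$ is indeed a left factor of $w_{\theta,\ap}$; this is precisely the situation of Example~\ref{ex:left-factor} with the chain $\theta\succ\gamma\succ\ap$.

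The two inputs I would then combine are monotonicity of inversion sets and upward-closedness of the ideal. Since $\Delta$ is simply-laced, $w_{\theta,\ap}$ is commutative by Lemma~\ref{lm:commut-elem}, so Example~\ref{ex:left-factor} (equivalently Proposition~\ref{lm:lezhit-nizhe}) tells us that $\eus N(w_{\gamma,\ap})$ is located below $\eus N(w_{\theta,\ap})$: for every $\nu'\in\eus N(w_{\gamma,\ap})$ there is some $\nu\in\eus N(w_{\theta,\ap})$ with $\nu'\curle\nu$. On the other hand, $I(\ap)_{max}$ is an order filter of $(\Delta^+,\curge)$: this is the $\be$-stability property recorded in the Introduction, which says that adding a positive root to an element of the ideal keeps it in the ideal, so $\nu'\in I(\ap)_{max}$ and $\nu\curge\nu'$ would force $\nu\in I(\ap)_{max}$. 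Now I argue by contradiction: if some $\nu'\in\eus N(w_{\gamma,\ap})$ lay in $I(\ap)_{max}$, the accompanying $\nu\curge\nu'$ would also lie in $I(\ap)_{max}$; but $\nu\in\eus N(w_{\theta,\ap})=\gH\setminus I(\ap)_{max}$ by Lemma~\ref{lm:N(w_theta,ap)}, hence $\nu\notin I(\ap)_{max}$, a contradiction. Therefore $\eus N(w_{\gamma,\ap})\subset\Delta^+\setminus I(\ap)_{max}$, as desired.

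I do not expect a genuine obstacle here, since every ingredient is already available; the only points demanding a little care are checking that $I(\ap)_{max}$ is genuinely upward-closed (immediate from $\be$-stability, but it is the hinge of the whole argument) and keeping the direction of the comparison straight, namely that ``located below'' pairs each $\nu'$ with a \emph{larger} root $\nu$ in the bigger inversion set, so that membership in the ideal propagates \emph{upward} from $\nu'$ to $\nu$. One should also confirm that $\eus N(w_{\theta,\ap})$ is disjoint from all of $I(\ap)_{max}$, not merely from $I(\ap)_{min}=I(\ap)_{max}\cap\gH$; this follows because $\eus N(w_{\theta,\ap})\subset\gH$ together with $I(\ap)_{max}\cap\gH=I(\ap)_{min}$ forces any common element into $I(\ap)_{min}$, which Lemma~\ref{lm:N(w_theta,ap)} excludes.
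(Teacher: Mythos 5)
Your proof is correct and is essentially the paper's own argument: both write $w_{\theta,\ap}=w_{\gamma,\ap}\,w_{\theta,\gamma}$ to exhibit $w_{\gamma,\ap}$ as a left factor, invoke Proposition~\ref{lm:lezhit-nizhe}/Example~\ref{ex:left-factor} to place $\eus N(w_{\gamma,\ap})$ below $\eus N(w_{\theta,\ap})=\gH\setminus I(\ap)_{max}$ (Lemma~\ref{lm:N(w_theta,ap)}), and conclude by the upward-closedness of the ideal. Your extra checks (the case $\gamma=\ap$, the direction of the order comparison, disjointness from all of $I(\ap)_{max}$ rather than just $I(\ap)_{min}$) are sound but merely make explicit what the paper leaves implicit.
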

\begin{proof}
If $\gamma=\theta$, then $\eus N(w_{\theta,\ap})=\gH\setminus I(\ap)_{max}$ by Lemma~\ref{lm:N(w_theta,ap)}.
For arbitrary $\gamma\in I(\ap)_{max}$, we have $\theta\curge\gamma\curge\ap$ and 
$w_{\theta,\ap}=w_{\gamma,\ap}w_{\theta,\gamma}$ (see the proof of Proposition~\ref{prop:unique-short}).
Then Proposition~\ref{lm:lezhit-nizhe} and Example~\ref{ex:left-factor} show that $\eus N(w_{\gamma,\ap})$ is located
below  $\eus N(w_{\theta,\ap})$. As $I(\ap)_{max}$ is an ideal and $\eus N(w_{\theta,\ap})$ does not intersect  $I(\ap)_{max}$, so
does $\eus N(w_{\gamma,\ap})$.
\end{proof}

Now, we are ready to provide an accurate construction of \MS\ associated with $\ap\in\Pi$.
Having the shortest elements $w_{\gamma,\ap}$ ($\gamma\in I(\ap)_{max}$) at our disposal, we set
$\tilde w_\gamma=\sigma_\ap w_{\gamma,\ap}$.  Then
$\eus N(\tilde w_\gamma)=\eus N(w_{\gamma,\ap})\cup\{w_{\gamma,\ap}^{-1}(\ap)\}=
\eus N(w_{\gamma,\ap})\cup\{\gamma\}$. In particular, 
$\ell(\tilde w_\gamma)=\ell(w_{\gamma,\ap})+1=\hot(\gamma)$. Note also that $\tilde w_\gamma$ is the shortest element of $W$ taking $\gamma$ to $-\ap$.

\begin{thm}   \label{thm:main}
Suppose that  $\Delta$ is simply-laced. For $\ap\in\Pi$ and the corresponding
maximal abelian ideal $I(\ap)_{max}$, the family
$\eus F_\ap=\{\tilde w_\gamma\mid \gamma\in I(\ap)_{max}\}$ is a \MS\ in $W$ and\/ 
$\ess(\eus F_\ap)\supset I(\ap)_{max}$.
\end{thm}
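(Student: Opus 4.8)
The plan is to derive everything from the two structural facts already established for the elements $\tilde w_\gamma$: that $\eus N(\tilde w_\gamma)=\eus N(w_{\gamma,\ap})\cup\{\gamma\}$, and that $\eus N(w_{\gamma,\ap})\subset\Delta^+\setminus I(\ap)_{max}$ by Lemma~\ref{lm:lezhit-vne-ideala}. First I would record the key consequence
\[
\eus N(\tilde w_\gamma)\cap I(\ap)_{max}=\{\gamma\}\qquad(\gamma\in I(\ap)_{max}),
\]
which is immediate, since $\gamma\in I(\ap)_{max}$ while every other element of $\eus N(\tilde w_\gamma)$ lies outside $I(\ap)_{max}$. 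From this the essentiality assertion follows at once: if $\gamma,\gamma'\in I(\ap)_{max}$ and $\gamma\in\eus N(\tilde w_{\gamma'})$, then $\gamma\in\eus N(\tilde w_{\gamma'})\cap I(\ap)_{max}=\{\gamma'\}$, so $\gamma'=\gamma$. Hence each $\gamma$ lies in exactly one member of $\eus F_\ap$, giving $\gamma\in\ess(\eus F_\ap)$ and therefore $\ess(\eus F_\ap)\supset I(\ap)_{max}$; the same computation shows the $\tilde w_\gamma$ are pairwise distinct, so $\#\eus F_\ap=\#I(\ap)_{max}$.

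The heart of the argument is completeness, $\bigcup_{\gamma}\eus N(\tilde w_\gamma)=\Delta^+$. The inclusion $I(\ap)_{max}\subset\bigcup_\gamma\eus N(\tilde w_\gamma)$ is clear because $\gamma\in\eus N(\tilde w_\gamma)$. To cover a root $\mu'\in\Delta^+\setminus I(\ap)_{max}$, I would invoke Lemma~\ref{lm:summa} to produce $\mu\in I(\ap)_{min}$ with $\gamma:=\mu+\mu'\in I(\ap)_{min}\subset I(\ap)_{max}$. Since $\tilde w_\gamma(\gamma)=-\ap$, applying $\tilde w_\gamma$ to $\gamma=\mu+\mu'$ yields $\tilde w_\gamma(\mu)+\tilde w_\gamma(\mu')=-\ap$, a sum of two roots equal to a negative root, so they cannot both be positive. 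If $\tilde w_\gamma(\mu')\in-\Delta^+$, then $\mu'\in\eus N(\tilde w_\gamma)$ and we are done. Otherwise $\tilde w_\gamma(\mu')\in\Delta^+$, which forces $\tilde w_\gamma(\mu)=-\ap-\tilde w_\gamma(\mu')$ to be a sum of two negative roots, hence negative, so $\mu\in\eus N(\tilde w_\gamma)$. But $\mu\in I(\ap)_{min}\subset I(\ap)_{max}$, and the displayed identity then forces $\mu=\gamma=\mu+\mu'$, i.e.\ $\mu'=0$, a contradiction. Thus $\mu'\in\eus N(\tilde w_\gamma)$, and completeness follows.

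Finally, minimality is a formal consequence of essentiality. If one removes $\tilde w_{\gamma_0}$ from $\eus F_\ap$, then since $\gamma_0$ is essential it belongs to no $\eus N(\tilde w_\gamma)$ with $\gamma\neq\gamma_0$, so $\gamma_0$ drops out of the union and the remaining family fails to cover $\Delta^+$; as this holds after deleting any single element, $\eus F_\ap$ is a \MS\ in the sense of Definition~\ref{def1}. I expect the only genuine obstacle to be the completeness step: the task is to match each outside root $\mu'$ with the \emph{correct} index $\gamma$, and the decisive inputs are precisely the additive property of Lemma~\ref{lm:summa} together with the fact that $\eus N(\tilde w_\gamma)$ meets the ideal only in $\gamma$; the positivity/negativity dichotomy for $\tilde w_\gamma(\mu)$ and $\tilde w_\gamma(\mu')$ then finishes the argument.
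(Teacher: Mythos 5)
Your proposal is correct and follows essentially the same route as the paper: the same key inputs (the decomposition $\eus N(\tilde w_\gamma)=\eus N(w_{\gamma,\ap})\cup\{\gamma\}$, Lemma~\ref{lm:lezhit-vne-ideala}, and Lemma~\ref{lm:summa} to produce $\gamma=\mu+\mu'$), the same positivity/negativity dichotomy, and the same minimality-via-essentiality conclusion. The only cosmetic difference is that you apply $\tilde w_\gamma$ and rule out the bad branch by contradiction with $\eus N(\tilde w_\gamma)\cap I(\ap)_{max}=\{\gamma\}$, whereas the paper applies $w_{\gamma,\ap}$ and notes directly that $w_{\gamma,\ap}(\mu)$ is positive, forcing $w_{\gamma,\ap}(\mu')$ to be negative.
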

\begin{proof}
We already know that  
$\eus N(\tilde w_\gamma)=\eus N(w_{\gamma,\ap})\cup\{\gamma\}$ and
\vskip.7ex
\centerline{$\eus N(w_{\gamma,\ap})\subset \Delta^+\setminus I(\ap)_{max}$ \ (Lemma~\ref{lm:lezhit-vne-ideala}).}
\vskip.7ex
Consequently, 
\[
\bigcup_{\gamma\in I(\ap)_{max}}\eus N(\tilde w_\gamma)=
I(\ap)_{max}\sqcup \Bigl(\bigcup_{\gamma\in I(\ap)_{max}} \eus N(w_{\gamma,\ap})\Bigr) .
\]
Take an arbitrary $\mu'\in \Delta^+\setminus I(\ap)_{max}$. By Lemma~\ref{lm:summa}, there is $\mu\in
I(\ap)_{min}$ such that $\gamma:=\mu+\mu'$ is a root, and thereby 
$\gamma\in I(\ap)_{min}\subset I(\ap)_{max}$. Then
\[
   \ap=w_{\gamma,\ap}(\gamma)=w_{\gamma,\ap}(\mu)+w_{\gamma,\ap}(\mu') .
\]
By Lemma~\ref{lm:lezhit-vne-ideala},  $w_{\gamma,\ap}(\mu)$ is a positive root. Hence 
$w_{\gamma,\ap}(\mu')$ must be negative, i.e., $\mu'\in \eus N(w_{\gamma,\ap})$. It follows that 
$\displaystyle \bigcup_{\gamma\in I(\ap)_{min}} \eus N(w_{\gamma,\ap})=\Delta^+\setminus I(\ap)_{max}$ and hence
$\displaystyle \bigcup_{\gamma\in I(\ap)_{max}} \eus N(\tilde w_\gamma)=\Delta^+$.
Note that $\tilde w_\gamma$ is the only element of $\eus F_\ap$ whose inversion set contains 
$\gamma\in I(\ap)_{max}$. Therefore, $\eus F_\ap$ is a \MS\ and $I(\ap)_{max}\subset \ess(\eus F_\ap)$.
\end{proof}

\begin{rema}  Our use of the canonical elements $w_{\gamma,\ap}$ and $\tilde w_\gamma$ ($\gamma\in I(\ap)_{max}$) replaces the ad hoc considerations and case-by-case calculations of \cite{MMOP}.
\end{rema}
The inclusion $\ess(\eus F_\ap)\supset I(\ap)_{max}$ can be strict.

\begin{ex}   \label{ex:An}
Let $\Delta$ be of type $\GR{A}{n}$ and $\Delta^+=\{\esi_i-\esi_j\mid 1\le i< j\le n+1\}$. Here $\theta=\esi_1-\esi_{n+1}$ and $\Pi=\{\ap_1,\dots,\ap_n\}$ with $\ap_i=\esi_i-\esi_{i+1}$. 

Consider the \MS\ associated with $\ap_1$. In this case, 
\[
I(\ap_1)_{max}=I(\ap_1)_{min}=\{\ap_1,\ap_1+\ap_2,\dots,\ap_1+\dots+\ap_n=\theta\} .
\] 
Then $w_{\theta,\ap_1}=\sigma_2\cdots \sigma_n$,
$\tilde w_\theta=\sigma_1\sigma_2\cdots \sigma_n$, and 
$\tilde w_{\ap_1+\dots+\ap_i}=\sigma_1\cdots \sigma_i$ for $1\le i\le n$. Thus,
\[
\eus N(\tilde w_{\ap_1+\dots+\ap_i})=\{\ap_i, \ap_i+\ap_{i-1},\dots, \ap_i+\dots+\ap_1\}
\]
and $\Delta^+=\bigsqcup_{i=1}^n \eus N(\tilde w_{\ap_1+\dots+\ap_i})$ (the disjoint union!).
Therefore, $\ess(\eus F_{\ap_1})=\Delta^+$. 

The similar situation occurs for $\ap_n$.
On the other hand, if $2\le i \le n-1$, then $(\theta,\ap_i)=0$ and therefore
$\ess(\eus F_{\ap_i})=I(\ap_i)_{max}$, see Theorem~\ref{thm:ab-nilrad}(ii) below.
\end{ex}

\begin{rmk}
If $\Delta$ has two root lengths and $\mu\in I(\ap)_{max}$ is short, then $\ap$ and $\mu$ are not $W$-conjugate. Therefore, one cannot define an element  $w_{\mu,\ap}$ and our approach, as it is, fails.
Of course, for each $\mu\in I(\ap)_{max}\cap \Delta_s$, one can find $\beta\in\Pi_s$ such that $\mu\curge\beta$ 
and then take the shortest element $w_{\mu,\beta}$, etc. But this is not canonical and, moreover, this 
merely does not provide a \MS!
In general, Malvenuto et al. suggest that the maximum cardinality of a \MS\ 
equals the maximum cardinality of a strongly abelian subset of $\Delta^+$~\cite[Sect.\,9]{MMOP}. 
Here $\ca\subset\Delta^+$
is said to be {\it strongly abelian}, if $(\BR_{>0}\gamma+\BR_{>0}\mu) \cap\Delta=\varnothing$ for any 
$\gamma,\mu\in\ca$.
\end{rmk}
\begin{ex}    \label{ex:F4}
For $\Delta$ of type $\GR{F}{4}$, $\#\ca\le 6$ and there is a \MS\ of cardinality $6$ \cite[(7.1)]{MMOP}, 
whereas the maximal abelian ideals are of cardinality $8$ and $9$. From our point of view, that specific 
\MS\ can be obtained as follows. Write $[n_1n_2n_3n_4]$ for $\gamma=\sum_{i=1}^4 n_i\ap_i\in \Delta^+(\GR{F}{4})$. The numbering of $\Pi$ is as in \cite[Tables]{t41}, so that $\Pi_s=\{\ap_1,\ap_2\}$ and
$\Pi_l=\{\ap_3,\ap_4\}$. Then
\[
 \ca=\{[2432], [2431], [2421], [2321], [1321], [1221] \}
\]
is a maximal strongly abelian subset, where the last three roots are short. For $\gamma\in \ca_l$
(resp. $\gamma\in \ca_s$), we take the element of minimal length $w_{\gamma,\ap_3}$ (resp. 
$w_{\gamma,\ap_1}$) and set $\tilde w_\gamma=\sigma_3w_{\gamma,\ap_3}$ ($\gamma\in \ca_l$) and 
$\tilde w_\gamma=\sigma_1w_{\gamma,\ap_1}$ ($\gamma\in \ca_s$).
The resulting six elements form a MICS pointed out in \cite{MMOP}. This procedure strongly resembles our
construction of \MS\ in Theorem~\ref{thm:main}, but we do not have a general theoretical explanation for this. 
Since this construction exploits simple
roots $\ap_1$ and $\ap_3$, we denote the resulting \MS\ by $\eus F_{1,3}$. Furthermore, a similar construction 
can be performed with the other simple roots in suitable combinations. That is,  we use either
$\ap_3$ or $\ap_4$ for $\gamma\in\ca_l$ and either $\ap_1$ or $\ap_2$ for $\gamma\in\ca_s$. This yields four different \MS\ of cardinality six:
$\eus F_{1,3}, \eus F_{1,4}, \eus F_{2,3}, \eus F_{2,4}$. But the essential sets for them are essentially different! We have $\#\ess(\eus F_{1,3})=\#\ess(\eus F_{2,4})=12$, $\#\ess(\eus F_{2,3})=14$, and
$\#\ess(\eus F_{1,4})=10$. For instance, $\ess(\eus F_{1,4})=\ca\cup\{\ap_1,\ap_4, [1211],[2221]\}$.
Thus, $\eus F_{1,4}$ has the smallest defect among them (=4).
\end{ex}
 
\section{Some properties of root systems}
\label{sect:some-prop-roots}

\noindent Here we gather  preparatory results on long roots that are needed in the following 
section, where we  describe the essential set for some \MS\ constructed in Section~\ref{sect:dva}.
\subsection{} Given $\gamma\in\Delta^+$, consider the set
$\Gamma_\gamma=\{\nu\in\Delta^+ \mid \gamma-\nu\in \Delta^+\}$. Clearly, if $\nu\in\Gamma_\gamma$, then $\gamma-\nu\in \Gamma_\gamma$ as well and $\gamma-\nu\ne \nu$. Therefore,
$\#\Gamma_\gamma  $ is even, and if it is $2k$, then there are exactly $k$ different ways to write $\gamma$ as a
sum of two positive roots.

\begin{lm}   \label{lem:svyaz-s-otrazheniyami}
If $\gamma\in\Delta^+_l$ and $\sigma_\gamma\in W$ is the reflection with respect to $\gamma$, then
$\eus N(\sigma_\gamma)=\Gamma_\gamma\cup\{\gamma\}$. In particular, \ 
$\#\Gamma_\gamma  +1=\ell(\sigma_\gamma)$.
\end{lm}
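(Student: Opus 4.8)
The plan is to decide, for each $\nu\in\Delta^+$, whether $\sigma_\gamma(\nu)=\nu-(\nu,\gamma^\vee)\gamma$ is negative, by a short case analysis on the integer $(\nu,\gamma^\vee)$. The decisive input is that $\gamma$ is \emph{long}: I would first record that $(\nu,\gamma^\vee)\in\{-1,0,1\}$ for every $\nu\ne\pm\gamma$. Indeed, since $\gamma$ is long we have $(\nu,\nu)\le(\gamma,\gamma)$, hence $|(\nu,\gamma^\vee)|\le|(\gamma,\nu^\vee)|$; combined with the standard bound $(\nu,\gamma^\vee)(\gamma,\nu^\vee)\le 3$ (strict inequality with $4$ being excluded precisely because $\nu\ne\pm\gamma$) this gives $(\nu,\gamma^\vee)^2\le(\nu,\gamma^\vee)(\gamma,\nu^\vee)\le 3$, so $|(\nu,\gamma^\vee)|\le 1$.

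Next I would dispose of $\nu=\gamma$: here $\sigma_\gamma(\gamma)=-\gamma<0$, so $\gamma\in\eus N(\sigma_\gamma)$, whereas $\gamma\notin\Gamma_\gamma$ since $\gamma-\gamma=0\notin\Delta^+$. For $\nu\in\Delta^+$ with $\nu\ne\gamma$ I would run through the three admissible values. If $(\nu,\gamma^\vee)=0$, then $\sigma_\gamma(\nu)=\nu>0$, so $\nu\notin\eus N(\sigma_\gamma)$; moreover $(\gamma-\nu,\gamma-\nu)=(\gamma,\gamma)+(\nu,\nu)$ exceeds the maximal squared root length, so $\gamma-\nu\notin\Delta$ and $\nu\notin\Gamma_\gamma$. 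If $(\nu,\gamma^\vee)=-1$, then $\sigma_\gamma(\nu)=\nu+\gamma$ is a positive root, so $\nu\notin\eus N(\sigma_\gamma)$; and the analogous norm computation $(\gamma-\nu,\gamma-\nu)=2(\gamma,\gamma)+(\nu,\nu)$ again rules out $\gamma-\nu$ being a root, so $\nu\notin\Gamma_\gamma$.

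The substantive case is $(\nu,\gamma^\vee)=1$. Then $(\nu,\gamma)>0$ and $\nu\ne\gamma$, so by the elementary root-string fact $\gamma-\nu\in\Delta$, and $\sigma_\gamma(\nu)=\nu-\gamma=-(\gamma-\nu)$. Hence $\sigma_\gamma(\nu)<0$ exactly when $\gamma-\nu\in\Delta^+$, i.e. exactly when $\nu\in\Gamma_\gamma$. Assembling the cases yields $\eus N(\sigma_\gamma)=\Gamma_\gamma\cup\{\gamma\}$, a disjoint union because $\gamma\notin\Gamma_\gamma$. The ``in particular'' assertion then follows from the identity $\ell(w)=\#\eus N(w)$ valid for every $w\in W$, giving $\ell(\sigma_\gamma)=\#\Gamma_\gamma+1$.

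I expect no serious obstacle, as the argument is purely local; the only place the hypothesis $\gamma\in\Delta^+_l$ is genuinely used is the bound $(\nu,\gamma^\vee)\in\{-1,0,1\}$. Were $\gamma$ short, the values $(\nu,\gamma^\vee)=\pm2$ (or $\pm3$ in $\GR{G}{2}$) could occur, $\sigma_\gamma(\nu)=\nu\mp2\gamma$ would no longer be a single root-string neighbour of $\nu$, and the clean bijection with $\Gamma_\gamma$ would fail. The only care required is sign bookkeeping together with the standard facts that $(\alpha,\beta)>0$ with $\alpha\ne\beta$ forces $\alpha-\beta\in\Delta$ and $(\alpha,\beta)<0$ forces $\alpha+\beta\in\Delta$, which I would cite from \cite{bour} or \cite{hump}.
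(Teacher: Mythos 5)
Your proof is correct and follows essentially the same route as the paper's: both analyse $\sigma_\gamma(\nu)=\nu-(\nu,\gamma^\vee)\gamma$ case by case on the Cartan integer, using the longness of $\gamma$ to force $(\nu,\gamma^\vee)\in\{-1,0,1\}$ for $\nu\ne\pm\gamma$ and then matching the case $(\nu,\gamma^\vee)=1$, $\gamma-\nu\in\Delta^+$ with $\Gamma_\gamma$. You merely spell out the length/norm computations that the paper leaves implicit in its displayed chain of equalities.
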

\begin{proof} Since $\sigma_\gamma(\nu)=\nu-(\nu,\gamma^\vee)\gamma$ \ and \ $(\nu,\gamma^\vee)=2$
if and only if $\nu=\gamma$, we have
\begin{multline*}
\eus N(\sigma_\gamma)=
\{\gamma\} \cup \{\mu\in \Delta^+\mid (\mu,\gamma^\vee)=1 \ \& \ \mu-\gamma\in -\Delta^+ \}=
\\
\{\gamma\} \cup \{\mu\in \Delta^+\mid  \gamma-\mu \in \Delta^+ \}= \{\gamma\} \cup\Gamma_\gamma.
\qedhere
\end{multline*}  
\end{proof}

\begin{rema}
If $\Delta$ is of type {\sf BCF} and $\gamma$ is short, then there is still a natural bijection between 
$\eus N(\sigma_\gamma)$ and $\Gamma_\gamma\cup\{\gamma\}$, hence the equality $\#\Gamma_\gamma  +1=\ell(\sigma_\gamma)$ survives. But this is not true for the dominant short root in type $\GR{G}{2}$.
\end{rema}

\begin{df}[\cite{bh93}]
The {\it depth\/} of  $\gamma\in\Delta^+$ is \ $\mathrm{dp}(\gamma)=\min\{\ell(w)\mid w\in W, \ w(\gamma)\in -\Delta^+\}$.
\end{df}

\noindent 
This notion has been studied in the context of arbitrary (possibly infinite) Coxeter groups~\cite{bh93,deMan},
but we only use it for Weyl groups. It is easily seen that if the required minimum of $\ell(w)$ is achieved, then 
$w(\gamma)\in-\Pi$. Furthermore, if $w(\gamma)=-\ap\in -\Pi$, then $w=\sigma_\ap w'$, where
$w'(\gamma)=\ap$ and $\ell(w)=\ell(w')+1$. Combining this with Proposition~\ref{prop:unique-short} with 
$\mu=\ap$, we see that $w'=w_{\gamma,\ap}$. Hence

\begin{lm}
If $\Delta$ is simply-laced, then $\mathrm{dp}(\gamma)=(\rho,\gamma^\vee)=\hot(\gamma)$ for
all \ $\gamma\in \Delta^+$.
\end{lm}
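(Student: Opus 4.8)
The plan is to turn the reduction recorded just above the statement into a single length computation. Fix $w\in W$ of minimal length with $w(\gamma)\in-\Delta^+$, so that $\ell(w)=\mathrm{dp}(\gamma)$; as noted there, one may take $w(\gamma)=-\ap\in-\Pi$, and then $w=\sigma_\ap w'$ with $w'(\gamma)=\ap$ and $\ell(w)=\ell(w')+1$. If I can identify $w'$ with the canonical element $w_{\gamma,\ap}$ of Proposition~\ref{prop:unique-short}, then $\ell(w')=(\rho,\gamma^\vee)-(\rho,\ap^\vee)=(\rho,\gamma^\vee)-1$, since $(\rho,\ap^\vee)=1$ for a simple root, whence $\mathrm{dp}(\gamma)=\ell(w')+1=(\rho,\gamma^\vee)$. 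The remaining equality $(\rho,\gamma^\vee)=\hot(\gamma)$ is just Eq.~\eqref{eq:rho-gamma} with $r=1$, as already recorded in the simply-laced discussion, so both claimed equalities drop out at once.

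Two points must be checked before Proposition~\ref{prop:unique-short} applies: that $w'$ is of minimal length among elements sending $\gamma$ to $\ap$, and that $\gamma\curge\ap$. The first is immediate: were there $w''$ with $w''(\gamma)=\ap$ and $\ell(w'')<\ell(w')$, then $\sigma_\ap w''$ would send $\gamma$ to $-\ap\in-\Delta^+$ with $\ell(\sigma_\ap w'')\le\ell(w'')+1<\ell(w')+1=\mathrm{dp}(\gamma)$, contradicting minimality of $w$; hence $w'$ is shortest, and once $\gamma\curge\ap$ is known, the uniqueness part of Proposition~\ref{prop:unique-short} forces $w'=w_{\gamma,\ap}$.

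The genuine obstacle is the comparability $\gamma\curge\ap$: since $W$ is transitive on roots in the {\sf ADE} case, an element carrying $\gamma$ to $\ap$ exists irrespective of the order relation, so comparability must be extracted from minimality. I would obtain it from a height-counting argument that simultaneously secures the lower bound. In the simply-laced case $(\nu,\beta^\vee)\in\{-1,0,1\}$ for positive roots $\nu\ne\beta$, so a simple reflection alters the height of a positive root by at most one and turns a positive root negative only when it equals the reflecting simple root. Writing $\gamma_0=\gamma$, $\gamma_k=\sigma_{i_k}(\gamma_{k-1})$ along a reduced word for a minimal $w$, the first sign change occurs at some $\gamma_{k-1}=\ap_{i_k}$ of height one; the preceding $k-1$ steps lower the height from $\hot(\gamma)$ to $1$ in unit decrements, so $k-1\ge\hot(\gamma)-1$ and hence $\ell(w)\ge k\ge\hot(\gamma)$. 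Combined with the upper bound $\mathrm{dp}(\gamma)\le\hot(\gamma)$ --- obtained by choosing any $\ap$ in the support of $\gamma$, for which $\gamma\curge\ap$ holds trivially and $\sigma_\ap w_{\gamma,\ap}(\gamma)=-\ap$ has length $\le\hot(\gamma)$ --- every inequality becomes an equality for the minimal $w$. In particular the path from $\gamma$ to $\ap$ descends by exactly one in height at each step, each step subtracting a simple root, so $\gamma-\ap$ is a nonnegative integral combination of simple roots, i.e.\ $\gamma\curge\ap$. This closes the gap and completes the identification, proving $\mathrm{dp}(\gamma)=(\rho,\gamma^\vee)=\hot(\gamma)$.
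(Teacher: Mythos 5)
Your proof is correct, and its skeleton is the paper's own: the paper establishes this lemma exactly by the reduction you quote (a minimal $w$ sends $\gamma$ to some $-\ap\in-\Pi$; write $w=\sigma_\ap w'$ with $w'(\gamma)=\ap$ and $\ell(w)=\ell(w')+1$; identify $w'$ with $w_{\gamma,\ap}$ via Proposition~\ref{prop:unique-short} with $\mu=\ap$), followed by the same length computation. Where you go beyond the paper is that you noticed, and repaired, a step it passes over in silence: Proposition~\ref{prop:unique-short} only applies when $\gamma\curge\ap$, i.e.\ when $\ap$ lies in the support of $\gamma$, and nothing a priori guarantees this for the particular simple root $\ap$ produced by the minimal $w$ --- in the simply-laced case $W$ moves $\gamma$ to every simple root, comparable or not. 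Your height-counting argument (a simple reflection changes the height of a positive root by at most one, and destroys positivity only at the reflecting simple root itself) settles precisely this point, and in fact does more: it gives the lower bound $\mathrm{dp}(\gamma)\ge\hot(\gamma)$ directly, while picking any $\ap$ in the support of $\gamma$ gives $\mathrm{dp}(\gamma)\le\ell(w_{\gamma,\ap})+1=\hot(\gamma)$ by Proposition~\ref{prop:unique-short} and Eq.~\eqref{eq:rho-gamma} with $r=1$. So your two-sided bound already proves the lemma on its own, with the comparability $\gamma\curge\ap$ and the identification $w'=w_{\gamma,\ap}$ falling out as by-products of the forced equalities. In short: same route as the paper, but your version is self-contained where the paper's is elliptic, and the extra work you did is exactly the work that was genuinely missing.
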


Then using the general equality $\ell(\sigma_\gamma)=2\mathrm{dp}(\gamma)-1$~\cite[Lemma\,1.2]{deMan},
we obtain  
\[
  \#\Gamma_\gamma=\ell(\sigma_\gamma)-1=2\mathrm{dp}(\gamma)-2=2\hot(\gamma)-2 .
\]
In other words,
\begin{prop}  \label{prop:rozlozh-gamma}
If $\Delta$ is simply-laced, then there are \ $\hot(\gamma)-1$ ways to write $\gamma\in\Delta^+$ as a sum of two positive roots.
\end{prop}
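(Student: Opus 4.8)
The plan is to reduce the statement to the cardinality of the set $\Gamma_\gamma$ already introduced, and then to assemble the three identities established in this section. First I would record the combinatorial bookkeeping: the number of \emph{unordered} decompositions $\gamma=\nu+(\gamma-\nu)$ with $\nu,\gamma-\nu\in\Delta^+$ is exactly $\tfrac12\#\Gamma_\gamma$. Indeed, $\nu\mapsto\gamma-\nu$ is an involution of $\Gamma_\gamma$, and it is fixed-point-free, since a fixed point would force $\gamma=2\nu$, hence $2\nu\in\Delta$, which is impossible in a reduced root system. Thus $\Gamma_\gamma$ splits into the disjoint pairs $\{\nu,\gamma-\nu\}$, each pair giving one decomposition, exactly as was noted before the statement; in particular $\#\Gamma_\gamma$ is even.

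Next I would feed in the length computation. By Lemma~\ref{lem:svyaz-s-otrazheniyami}, which applies because every root is long in the {\sf ADE} case, one has $\#\Gamma_\gamma+1=\ell(\sigma_\gamma)$. The preceding lemma gives $\mathrm{dp}(\gamma)=\hot(\gamma)$ in the simply-laced setting, and the general equality $\ell(\sigma_\gamma)=2\,\mathrm{dp}(\gamma)-1$ recalled above then yields $\ell(\sigma_\gamma)=2\hot(\gamma)-1$. Combining these, $\#\Gamma_\gamma=\ell(\sigma_\gamma)-1=2\hot(\gamma)-2$, and therefore the number of decompositions equals $\tfrac12\#\Gamma_\gamma=\hot(\gamma)-1$, which is the claim.

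I do not expect any genuine obstacle here: the proposition is simply the halved reformulation of the already-displayed equality $\#\Gamma_\gamma=2\hot(\gamma)-2$, and all of its ingredients (the reflection-length identity, the computation of depth in the {\sf ADE} case, and de Man's formula) have been prepared immediately beforehand. The only point deserving a moment's care is the fixed-point-freeness of the involution $\nu\mapsto\gamma-\nu$, which is what guarantees both that $\#\Gamma_\gamma$ is even and that dividing by two correctly counts the \emph{unordered} pairs rather than ordered ones.
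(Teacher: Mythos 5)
Your proof is correct and follows essentially the same route as the paper: the count of decompositions is $\tfrac12\#\Gamma_\gamma$, and the chain $\#\Gamma_\gamma+1=\ell(\sigma_\gamma)=2\,\mathrm{dp}(\gamma)-1=2\hot(\gamma)-1$ (Lemma~\ref{lem:svyaz-s-otrazheniyami}, de~Man's identity, and the depth computation) gives $\#\Gamma_\gamma=2\hot(\gamma)-2$. Your explicit check that the involution $\nu\mapsto\gamma-\nu$ is fixed-point-free is a nice touch; the paper states this more tersely (``$\gamma-\nu\ne\nu$'') in the paragraph introducing $\Gamma_\gamma$.
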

\begin{rema}
More generally, if $\Delta$ is arbitrary and $\gamma\in\Delta^+_s$, then
$\mathrm{dp}(\gamma)=\hot(\gamma)$. This allows to conclude that if $\gamma\in\Delta^+$ is also arbitrary, then
$\mathrm{dp}(\gamma)=\min\{\hot(\gamma),\hot(\gamma^\vee)\}$, where $\hot(\gamma^\vee)$ is the height
of $\gamma^\vee$ in the dual root system $\Delta^\vee$. But we do need this here.
\end{rema}
\subsection{} Consider a sort of dual object to $\Gamma_\gamma$.
For $\gamma\in\Delta^+$,  set
$\Phi_\gamma=\{\nu \in\Delta^+ \mid \nu-\gamma\in \Delta^+\}$.

\begin{lm}   \label{lm:Phi-gamma}
If $\gamma\in \Delta^+_l$, then $\#\Phi_\gamma=(\rho,\theta^\vee-\gamma^\vee)$. In particular, if $\Delta$
is simply-laced, then $\#\Phi_\gamma=\hot(\theta)-\hot(\gamma)=h-1-\hot(\gamma)$.
\end{lm}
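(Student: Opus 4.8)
The plan is to count $\Phi_\gamma$ by transporting it, via translation by $\gamma$, onto the explicitly described inversion set $\eus N(w_{\theta,\gamma}^{\ -1})$ of Eq.~\eqref{eq:w-inverse}. First I would record the elementary pairing constraint coming from $\gamma$ being long: by Cauchy--Schwarz, $(\nu,\gamma^\vee)\le 1$ for every root $\nu\ne\gamma$ and $(\nu,\gamma^\vee)\ge -1$ for every root $\nu\ne -\gamma$, so that $(\nu,\gamma^\vee)\in\{-1,0,1\}$ for all $\nu\in\Delta^+\setminus\{\gamma\}$. This is the only structural input needed about long roots.

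Next I would show that $\nu\mapsto \nu-\gamma$ is a bijection from $\Phi_\gamma$ onto $B:=\{\nu\in\Delta^+\mid (\nu,\gamma^\vee)=-1\}$. For $\nu\in\Phi_\gamma$, put $\mu=\nu-\gamma\in\Delta^+$; then $\nu\ne\gamma$ gives $(\nu,\gamma^\vee)\le 1$, while $(\mu,\gamma^\vee)=(\nu,\gamma^\vee)-2\ge -1$ (since $\mu\in\Delta^+$) gives $(\nu,\gamma^\vee)\ge 1$. Hence $(\nu,\gamma^\vee)=1$ and $(\nu-\gamma,\gamma^\vee)=-1$, so $\nu-\gamma\in B$. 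Conversely, for $\mu\in B$ we have $(\mu,\gamma)<0$, so $\mu+\gamma\in\Delta$ by the standard root-string argument; being a sum of two positive roots that is a root, $\mu+\gamma\in\Delta^+$, and $(\mu+\gamma)-\gamma=\mu\in\Delta^+$ shows $\mu+\gamma\in\Phi_\gamma$. The two assignments are mutually inverse, whence $\#\Phi_\gamma=\#B$.

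Finally, Eq.~\eqref{eq:w-inverse} identifies $B$ with $\eus N(w_{\theta,\gamma}^{\ -1})$, so $\#\Phi_\gamma=\ell(w_{\theta,\gamma}^{\ -1})=\ell(w_{\theta,\gamma})=(\rho,\theta^\vee-\gamma^\vee)$ by Proposition~\ref{prop:unique-short} applied to the pair $\theta\curge\gamma$. In the simply-laced case $r=1$ gives $(\rho,\theta^\vee)=\hot(\theta)$ and $(\rho,\gamma^\vee)=\hot(\gamma)$, and $h=\hot(\theta)+1$, yielding the stated form $\hot(\theta)-\hot(\gamma)=h-1-\hot(\gamma)$. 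The main obstacle is the positivity bookkeeping in the bijection — checking that $\nu-\gamma$ genuinely stays in $\Delta^+$ one way and that $\mu+\gamma$ stays in $\Delta^+$ the other way. Both follow uniformly from the principle that a root which is a sum of two positive roots is positive, combined with the long-root pairing bounds, so no case analysis is required.
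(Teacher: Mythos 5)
Your proof is correct and follows essentially the same route as the paper: both identify $\Phi_\gamma$ bijectively (via $\nu\mapsto\nu\mp\gamma$) with $\{\mu\in\Delta^+\mid (\mu,\gamma^\vee)=-1\}=\eus N(w_{\theta,\gamma}^{\ -1})$ using Eq.~\eqref{eq:w-inverse}, and then invoke $\ell(w_{\theta,\gamma})=(\rho,\theta^\vee-\gamma^\vee)$ from Proposition~\ref{prop:unique-short}. The only difference is that you spell out the positivity and pairing bookkeeping (the long-root bound $(\nu,\gamma^\vee)\in\{-1,0,1\}$ and the root-string argument) that the paper compresses into ``it remains to observe.''
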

\begin{proof}
Consider the shortest element $w_{\theta,\gamma}\in W$. Then 
$\ell(w_{\theta,\gamma})=(\rho,\theta^\vee-\gamma^\vee)$ (Proposition~\ref{prop:unique-short})
and $\eus N(w_{\theta,\gamma}^{\ -1})=\{\mu\in\Delta^+\mid (\mu,\gamma^\vee)=-1\}$, 
see~Eq.~\eqref{eq:w-inverse}. That is, the number of positive roots
$\mu$ such that $\gamma+\mu$ is a root equals $\ell(w_{\theta,\gamma})$. It remains to observe that
$(\mu,\gamma^\vee)=-1$ if and only if $\gamma+\mu\in \Phi_\gamma$.  
\end{proof} 

\subsection{} For $\ap\in \Pi$ and $\nu\in\Delta$, let $[\nu:\ap]$ denote the coefficient of $\ap$ in the sum
$\nu=\sum_{\beta\in \Pi}c_\beta\beta$. We also say that $[\nu:\ap]$ is the $\ap$-{\it height\/} of $\nu$.
Set $\Delta_\ap(i)=\{\gamma\in\Delta \mid [\gamma:\ap]=i\}$ and 
$\Delta_\ap(0)^+=\Delta_\ap(0)\cap \Delta^+$. The Weyl group of $\Delta_\ap(0)$ is 
$W_\ap(0)=\langle \sigma_\beta\mid \beta\in \Pi\setminus\{\ap\}\rangle$.
\\ \indent
Although, we are not aware of a general description for $\eus N(w_{\gamma,\ap})$, we do have a nice
formula if $[\gamma:\ap]=1$, i.e., $\gamma\in\Delta_\ap(1)$.

\begin{prop}   \label{prop:inv-set-for-height-1}
Suppose that $\Delta$ is simply-laced.
If $\gamma\curge \ap$ and $[\gamma:\ap]=1$, then 
$\eus N(w_{\gamma,\ap})=\{\mu\in \Delta_\ap(0)^+\mid \gamma-\mu\in\Delta^+\}$.
\end{prop}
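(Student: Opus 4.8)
The plan is to prove the set equality
$$\eus N(w_{\gamma,\ap})=\{\mu\in \Delta_\ap(0)^+\mid \gamma-\mu\in\Delta^+\}$$
by establishing both inclusions, using the path-of-minimal-length picture together with the fact that $[\gamma:\ap]=1$. Let me set $M:=\{\mu\in \Delta_\ap(0)^+\mid \gamma-\mu\in\Delta^+\}$. Recall from the discussion after Proposition~\ref{prop:unique-short} that in the {\sf ADE} case, a reduced expression $w_{\gamma,\ap}=\sigma_{i_m}\cdots\sigma_{i_1}$ produces a path $\gamma=\gamma_0\succ\gamma_1\succ\cdots\succ\gamma_m=\ap$ in which each step subtracts a simple root, and the multiset of simple reflections appearing is fixed by $\gamma-\ap$. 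The crucial structural input is that $[\gamma:\ap]=1$ while $[\ap:\ap]=1$, so $[\gamma-\ap:\ap]=0$; hence the simple reflection $\sigma_\ap$ does \textbf{not} occur in $w_{\gamma,\ap}$, and every $\gamma_k$ along the path has $[\gamma_k:\ap]=1$. Consequently $w_{\gamma,\ap}\in W_\ap(0)=\langle\sigma_\beta\mid\beta\ne\ap\rangle$, so $\eus N(w_{\gamma,\ap})\subset\Delta_\ap(0)^+$ automatically (a product of reflections in $W_\ap(0)$ can only invert roots of $\ap$-height zero).

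\medskip

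For the inclusion $\eus N(w_{\gamma,\ap})\subset M$, I would argue as follows. Take $\mu\in\eus N(w_{\gamma,\ap})$; by the previous paragraph $\mu\in\Delta_\ap(0)^+$. I want $\gamma-\mu\in\Delta^+$. The cleanest route is via the factorisation used repeatedly in the paper: since $w_{\gamma,\ap}$ is a right factor inside $w_{\theta,\ap}=w_{\gamma,\ap}w_{\theta,\gamma}$, one has $\eus N(w_{\gamma,\ap})\subset\eus N(w_{\theta,\ap})$, but more usefully I want to exploit $\mu\in\eus N(w_{\gamma,\ap})$ directly: $w_{\gamma,\ap}(\mu)\in-\Delta^+$ while $w_{\gamma,\ap}(\gamma)=\ap$. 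Writing $\ap-w_{\gamma,\ap}(\mu)=w_{\gamma,\ap}(\gamma-\mu)$ and noting $w_{\gamma,\ap}(\mu)$ is a negative root in $\Delta_\ap(0)$ (so its $\ap$-coefficient is $0$ and its other coefficients are $\le 0$), the vector $\gamma-\mu$ has $\ap$-coefficient $1$ and all remaining coefficients $\ge 0$; this forces $\gamma-\mu$ to be a nonzero nonnegative combination of simple roots with $[\gamma-\mu:\ap]=1$. To upgrade ``nonnegative combination'' to ``root'' I would use the commutativity of $w_{\gamma,\ap}$ (Lemma~\ref{lm:commut-elem}) and property~\eqref{eq:svoistvo}: pairwise inner products of roots in $\eus N(w_{\gamma,\ap})$ are $\ge 0$, which controls how $\mu$ sits relative to the path.

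\medskip

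For the reverse inclusion $M\subset\eus N(w_{\gamma,\ap})$, suppose $\mu\in\Delta_\ap(0)^+$ with $\gamma-\mu\in\Delta^+$. Since $[\gamma:\ap]=1$ and $[\mu:\ap]=0$, we get $[\gamma-\mu:\ap]=1$; in particular $\gamma-\mu\ne\ap$ forces nothing, but I claim $w_{\gamma,\ap}(\mu)\in-\Delta^+$. Here I would apply $w_{\gamma,\ap}$ to the root equation $\gamma=\mu+(\gamma-\mu)$ to get $\ap=w_{\gamma,\ap}(\mu)+w_{\gamma,\ap}(\gamma-\mu)$. Both summands lie in $\Delta_\ap(0)\cup\Delta_\ap(1)$, but since $[\gamma-\mu:\ap]=1$ its image is the unique summand carrying the $\ap$; matching $\ap$-heights against $[\ap:\ap]=1$ shows $w_{\gamma,\ap}(\mu)\in\Delta_\ap(0)$ and then that it must be negative (otherwise $\ap$ would be a sum of two positive roots, impossible for a simple root). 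Hence $\mu\in\eus N(w_{\gamma,\ap})$.

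\medskip

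The main obstacle, I expect, is the subtlety in the $\subset$ direction: passing from ``$\gamma-\mu$ is a nonnegative integer combination of simple roots'' to ``$\gamma-\mu$ is actually a root.'' The $\ap$-height bookkeeping handles the coefficient signs cleanly, but rootness is not automatic. I anticipate that the right tool is either the explicit minimal-length path (every initial segment $\gamma_k$ is a root obtained by subtracting simple roots, and one shows $\mu$ equals some partial sum $\gamma-\gamma_k$ of the subtracted roots, using that $\sigma_\ap$ never appears so all steps stay in $\Delta_\ap(0)$-translates of height $1$) or the commutativity property~\eqref{eq:svoistvo}. The cleanest write-up will likely reduce both inclusions to the single assertion that $w_{\gamma,\ap}\in W_\ap(0)$ together with the standard description of inversion sets of elements of a parabolic subgroup acting on a height-$1$ root, so I would foreground that reduction first and let the $\ap$-height argument do the rest.
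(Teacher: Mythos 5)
Your reduction to $w_{\gamma,\ap}\in W_\ap(0)$ and your proof of the inclusion $\{\mu\in \Delta_\ap(0)^+\mid \gamma-\mu\in\Delta^+\}\subset \eus N(w_{\gamma,\ap})$ are correct and coincide with the paper's argument: applying $w_{\gamma,\ap}$ to $\gamma=\mu+(\gamma-\mu)$, the summand of $\ap$-height $1$ stays positive (elements of $W_\ap(0)$ preserve the $\ap$-coefficient of every root), so the other summand must become negative, since the simple root $\ap$ cannot be a sum of two positive roots. The genuine gap is in the opposite inclusion $\eus N(w_{\gamma,\ap})\subset M$. For $\mu\in\eus N(w_{\gamma,\ap})$ you write $w_{\gamma,\ap}(\gamma-\mu)=\ap-w_{\gamma,\ap}(\mu)$ and conclude that ``$\gamma-\mu$ has $\ap$-coefficient $1$ and all remaining coefficients $\ge 0$''; but what your computation controls is the coefficients of the \emph{image} $w_{\gamma,\ap}(\gamma-\mu)$, not of $\gamma-\mu$ itself, and $w_{\gamma,\ap}^{-1}$ does not carry nonnegative combinations to nonnegative combinations. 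Worse, as you yourself concede, even with that sign bookkeeping repaired nothing in your text upgrades ``nonnegative integer combination of simple roots'' to ``root''; the appeal to commutativity and to~\eqref{eq:svoistvo} is a hope rather than an argument. So the forward inclusion is not proved.

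The paper closes exactly this gap not by a direct argument but by counting, and this is the one idea your proposal is missing. Your correct half shows that every presentation $\gamma=\mu_1+\mu_2$ as a sum of positive roots, with $\mu_1\in\Delta_\ap(0)^+$ and $\mu_2\in\Delta_\ap(1)$, yields the element $\mu_1\in\eus N(w_{\gamma,\ap})$, and distinct presentations yield distinct $\mu_1$ (the pair is recovered from $\mu_1$ as $\{\mu_1,\gamma-\mu_1\}$). By Proposition~\ref{prop:rozlozh-gamma} --- proved in the paper for precisely this purpose via the depth identity $\ell(\sigma_\gamma)=2\,\mathrm{dp}(\gamma)-1$ --- the number of such presentations equals $\hot(\gamma)-1$, while Proposition~\ref{prop:unique-short} gives $\#\eus N(w_{\gamma,\ap})=\ell(w_{\gamma,\ap})=\hot(\gamma)-\hot(\ap)=\hot(\gamma)-1$. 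An inclusion between finite sets of equal cardinality is an equality, so every element of $\eus N(w_{\gamma,\ap})$ does arise from a presentation of $\gamma$ as a sum of two positive roots, which is the statement. If you wish to keep a direct proof of the forward inclusion you would need a genuinely new argument; the cardinality comparison is both shorter and uses only results already established in the paper.
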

\begin{proof}
Since $\gamma$ and $\ap$ have the same $\ap$-height, any reduced expression for $w_{\gamma,\ap}$
does not contain $\sigma_\ap$. Therefore, $w_{\gamma,\ap}\in W_\ap(0)$ and 
$\eus N(w_{\gamma,\ap})\subset \Delta_\ap(0)^+$. Suppose that $\gamma=\mu_1+\mu_2$ is a sum of positive 
roots. Then  $\mu_1\in \Delta_\ap(0)^+$ and $\mu_2\in\Delta_\ap(1)$. Since
\[
  \ap=w_{\gamma,\ap}(\gamma)=w_{\gamma,\ap}(\mu_1)+w_{\gamma,\ap}(\mu_2) ,
\]
exactly one summand in the right hand side is negative. The above discussion implies that
$w_{\gamma,\ap}(\mu_1)\in -\Delta^+$, i.e.,  $\mu_1\in \eus N(w_{\gamma,\ap})$. This means that every
presentation of $\gamma$ as a sum of two positive roots yields an element of $\eus N(w_{\gamma,\ap})$.
On the other hand, we know that 
\\ \indent
(i) \  $\#\eus N(w_{\gamma,\ap})=\hot(\gamma)-1$ (Proposition~\ref{prop:unique-short})  and
\\ \indent
(ii) \ the total number of such presentations is $\hot(\gamma)-1$ (Proposition~\ref{prop:rozlozh-gamma}). 
\\
Thus, each element of $\eus N(w_{\gamma,\ap})$ lies in $\Delta_\ap(0)^+$ and
is associated with a presentation of $\gamma$ as a sum of two roots. 
\end{proof}

\section{On the essential set of $\eus F_\ap$}
\label{sect:essential}

\noindent
Any \MS\ $\eus F\subset W$ determines a natural statistic on $\Delta^+$:
\[
    \gamma\mapsto 
    n_{\eus F}(\gamma)=\#\{w\in \eus F\mid \gamma\in \eus N(w)\} .
\]
We call it the $\eus F$-{\it statistic\/} on $\Delta^+$ and say that $n_{\eus F}(\gamma)$ is the 
$\eus F$-{\it multiplicity\/} of $\gamma$. Clearly, $\ess(\eus F)=\{\gamma\in\Delta^+\mid n_{\eus F}(\gamma)=1\}$. 
For $\eus F=\eus F_\ap$ as above, we write $n_{\ap}(\gamma)$ in place of 
$n_{\eus F_\ap}(\gamma)$.

In this section,  
we describe $\ess(\eus F_\ap)$ in the following two cases:

{\sf (A)} \  $I(\ap)_{max}$ is the set of roots of the nilradical of a standard parabolic subalgebra $\p$ of $\g$.
(That is, the nilradical of $\p$ is abelian.)

{\sf (B)} \ $\theta$ is a fundamental weight and $\ap=\hat\ap$ is the unique simple root such that
$(\theta,\hat\ap)\ne 0$. 
\\[.7ex]
We also provide a full description of the $\eus F_\ap$-statistic in case {\sf (A)}. Recall the necessary setup. 

\noindent
Let $\p_\ap$ be the maximal parabolic subalgebra associated with $\ap\in\Pi$. Then $\Delta_\ap(0)$
is the root system of the standard Levi subalgebra of $\p_\ap$ and $\bigcup_{i\ge 1}\Delta_{\ap}(i)$
is the set of roots of the  nilradical $\n_\ap$ of $\p_\ap$. The system $\Delta_\ap(0)$ can be 
reducible, and we need its partition into irreducible subsystems
$\Delta_\ap(0)=\bigsqcup_{j\in J} \Delta_\ap(0)_j$.

\un{For {\sf (A)}}: \ Suppose that $\ap$ has the property that $[\theta:\ap]=1$. Then
$\Delta^+=\Delta_\ap(0)^+\cup \Delta_\ap(1)$ and $\n_\ap$ is an abelian ideal of $\be$.
Moreover, $\Delta_\ap(1)=I(\ap)_{max}$. Note that 
$\ap\in \Delta_\ap(1)$.

As is well-known, the simple roots $\ap$ such that $[\theta:\ap]=1$ are:

\textbullet\quad $\ap_1,\dots,\ap_n$ for $\GR{A}{n}$;

\textbullet\quad $\ap_1,\ap_{n-1},\ap_n$ for $\GR{D}{n}$, $n\ge 4$;

\textbullet\quad $\ap_1,\ap_5$ for $\GR{E}{6}$ and $\ap_1$ for $\GR{E}{7}$; there are no such simple roots in $\GR{E}{8}$. (We use the numbering of simple roots from \cite[Tables]{t41}.)
\\
Furthermore, $\#J=2$ for $(\GR{A}{n}, \ap_i)$ with $2\le i\le n-1$ and $\#J=1$  in the other cases.

\begin{thm}   \label{thm:ab-nilrad}
Suppose that $[\theta:\ap]=1$ and 
$\Delta^+=\Delta_\ap(0)^+\sqcup \Delta_\ap(1)=\Delta_\ap(0)^+\sqcup I(\ap)_{max}$, as
above. Consider the \MS\ \ $\eus F_\ap=\{\tilde w_\gamma\mid \gamma\in \Delta_\ap(1)\}$ constructed in Section~\ref{sect:dva}.

{\sf (i)} \ For $\mu\in \Delta_\ap(0)^+$, the $\eus F_\ap$-multiplicity of $\mu$ depends only on the irreducible 
subsystem $\Delta_\ap(0)_j$ to which $\mu$ belongs. Namely, if $h$ (resp. $h_j$) is the Coxeter number of 
$\Delta$ (resp. $\Delta_\ap(0)_j$), then $n_{\ap}(\mu)=h-h_j$.

{\sf (ii)} \ In particular, if $(\theta,\ap)=0$ (which covers all\/ $\GR{D}{n}$ and\/ $\GR{E}{n}$ cases), 
then\/ $\ess(\eus F_\ap)=I(\ap)_{max}$ and thereby\/ $\mathrm{defect}(\eus F_\ap)=0$.
\end{thm}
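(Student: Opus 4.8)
The plan is to compute the full $\eus F_\ap$-statistic $\mu\mapsto n_\ap(\mu)$ and then simply read off the essential set. Recall that $\tilde w_\gamma=\sigma_\ap w_{\gamma,\ap}$ has $\eus N(\tilde w_\gamma)=\eus N(w_{\gamma,\ap})\cup\{\gamma\}$ with $\eus N(w_{\gamma,\ap})\subset\Delta^+\setminus I(\ap)_{max}=\Delta_\ap(0)^+$ (Lemma~\ref{lm:lezhit-vne-ideala} together with the case-{\sf (A)} decomposition). Hence $\gamma$ is covered only by $\tilde w_\gamma$, so $n_\ap(\gamma)=1$ for every $\gamma\in\Delta_\ap(1)=I(\ap)_{max}$, matching Theorem~\ref{thm:main}. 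The content of part (i) is therefore to count, for $\mu\in\Delta_\ap(0)^+$, the elements $\gamma\in\Delta_\ap(1)$ with $\mu\in\eus N(w_{\gamma,\ap})$. Here I would invoke Proposition~\ref{prop:inv-set-for-height-1}: since $[\gamma:\ap]=1$ for $\gamma\in\Delta_\ap(1)$, we have $\mu\in\eus N(w_{\gamma,\ap})$ if and only if $\gamma-\mu\in\Delta^+$. Thus $n_\ap(\mu)=\#\{\gamma\in\Delta_\ap(1)\mid \gamma-\mu\in\Delta^+\}=\#(\Phi_\mu\cap\Delta_\ap(1))$, recasting the statistic in terms of the sets $\Phi_\mu$ of Section~\ref{sect:some-prop-roots}.

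The next step is a telescoping count. I would split $\Phi_\mu=(\Phi_\mu\cap\Delta_\ap(0)^+)\sqcup(\Phi_\mu\cap\Delta_\ap(1))$ according to $\ap$-height (no element of $\Phi_\mu$ can have $\ap$-height $\ge 2$, as $\mu\in\Delta_\ap(0)^+$). For the first piece, if $\nu\in\Delta_\ap(0)^+$ and $\nu-\mu\in\Delta^+$, then $\nu-\mu\in\Delta_\ap(0)^+$; since $\mu$ lies in a single irreducible component $\Delta_\ap(0)_j$ while roots from distinct components are orthogonal, a root $\nu$ with $\nu-\mu\in\Delta$ must itself lie in $\Delta_\ap(0)_j$ (as must $\nu-\mu$). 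Hence $\Phi_\mu\cap\Delta_\ap(0)^+$ is exactly the set $\Phi_\mu$ computed intrinsically inside $\Delta_\ap(0)_j$. Applying Lemma~\ref{lm:Phi-gamma} in $\Delta$ gives $\#\Phi_\mu=h-1-\hot(\mu)$, and applying it inside $\Delta_\ap(0)_j$ (again simply-laced, with its height function agreeing with $\hot$ on $\mu$ because $\mu$ involves no $\ap$) gives $\#(\Phi_\mu\cap\Delta_\ap(0)^+)=h_j-1-\hot(\mu)$. Subtracting yields $n_\ap(\mu)=\#(\Phi_\mu\cap\Delta_\ap(1))=h-h_j$, which is part (i).

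For part (ii) I would show that $(\theta,\ap)=0$ forces $h-h_j\ge 2$ for every $j$, so that by (i) no $\mu\in\Delta_\ap(0)^+$ is essential. Writing $\theta_j$ for the highest root of $\Delta_\ap(0)_j$, one has $h-h_j=\hot(\theta)-\hot(\theta_j)=\hot(\theta-\theta_j)$, and $\theta-\theta_j$ is a nonnegative combination of simple roots with $\ap$-coefficient $[\theta:\ap]-[\theta_j:\ap]=1$. If $\hot(\theta-\theta_j)=1$, then $\theta-\theta_j=\ap$, i.e.\ $\theta-\ap=\theta_j\in\Delta^+$, which requires $(\theta,\ap^\vee)\ge 1$ and contradicts $(\theta,\ap)=0$; hence $h-h_j\ge 2$. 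Combining this with $n_\ap(\gamma)=1$ for all $\gamma\in I(\ap)_{max}$ gives $\ess(\eus F_\ap)=\{\gamma\mid n_\ap(\gamma)=1\}=I(\ap)_{max}$, and therefore $\mathrm{defect}(\eus F_\ap)=\#I(\ap)_{max}-\#\eus F_\ap=0$.

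I expect the main obstacle to be the bookkeeping in the telescoping step: verifying rigorously that $\Phi_\mu\cap\Delta_\ap(0)^+$ coincides with the intrinsic $\Phi_\mu$ of the \emph{single} irreducible component $\Delta_\ap(0)_j$ (this is exactly where orthogonality of distinct components and the $\ap$-height grading are needed), and confirming that Lemma~\ref{lm:Phi-gamma} applies verbatim to $\Delta_\ap(0)_j$ with the two height functions matching. Once the statistic is reduced to two clean applications of Lemma~\ref{lm:Phi-gamma}, both parts follow mechanically; the only delicate point in (ii) is the short $\ap$-string argument above, which is precisely what distinguishes a Coxeter-number drop of $1$ (as in the $\GR{A}{n}$ endpoints of Example~\ref{ex:An}, where $(\theta,\ap)\ne 0$) from a drop of $\ge 2$.
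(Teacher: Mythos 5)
Your proposal is correct and follows essentially the same route as the paper's proof: reduce to $\mu\in\Delta_\ap(0)^+$ via Theorem~\ref{thm:main}, convert $n_\ap(\mu)$ into $\#(\Phi_\mu\cap\Delta_\ap(1))$ using Proposition~\ref{prop:inv-set-for-height-1}, and compute it by applying Lemma~\ref{lm:Phi-gamma} twice (in $\Delta$ and intrinsically in the component $\Delta_\ap(0)_j$), with the same orthogonality-of-components observation. Your argument for (ii) — that $h-h_j=1$ would force $\theta-\ap$ to be a root, contradicting $(\theta,\ap)=0$ — is just a minor rephrasing of the paper's two-subtraction count, so there is nothing to add.
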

\begin{proof}
(i) \ By Theorem~\ref{thm:main}, $n_\ap(\mu)=1$ for $\mu\in\Delta_\ap(1)$, and we only have to handle the 
case in which $\mu\in\Delta_\ap(0)^+$. The very construction of $\eus F_\ap$ shows that $n_\ap(\mu)$ 
equals the number of $\gamma\in\Delta_\ap(1)$ such that $\mu\in \eus N(w_{\gamma,\ap})$. According to 
Proposition~\ref{prop:inv-set-for-height-1},  the latter equals the number of $\gamma\in\Delta_\ap(1)$ such
that $\gamma-\mu\in\Delta^+$.

Recall that $\Phi_\mu=\{\gamma\in \Delta^+\mid \gamma-\mu\in\Delta^+\}$ and $\#\Phi_\mu=h-1-\hot(\mu)$
(Lemma~\ref{lm:Phi-gamma}). Here 
$\Phi_\mu=\Phi_\mu(0)\cup\Phi_\mu(1)$, according to whether $\gamma$ lies in $\Delta_\ap(0)^+$ or 
$\Delta_\ap(1)$, and $n_\ap(\mu)=\#\Phi_\mu(1)$. If $\gamma\in\Delta_\ap(0)^+$, then $\gamma$ and $\mu$ must belong to one and the 
same irreducible subsystem of $\Delta_\ap(0)$. Therefore, $\#\Phi_\mu(0)=h_j-1-\hot(\mu)$ for 
$\mu\in \Delta_\ap(0)_j$ and hence $\#\Phi_\mu(1)=h-h_j$.

(ii) \ If $(\ap,\theta)=0$ and $[\theta:\ap]=1$, then the highest root $\theta_j$ in $\Delta_\ap(0)_j$ has the height at
least two less than the height of $\theta$, i.e., $h-h_j\ge 2$. (Indeed, at the very first step down from $\theta$ to
$\theta_j$,
one subtracts certain $\beta\in \Pi$ such that $(\beta,\theta)\ne 0$, and one also must subtract $\ap$ once.)
\end{proof}

\begin{ex}   \label{ex:posle-teor}  For all $\ap\in\Pi$ with $[\theta:\ap]=1$, we point out the differences
$h-h_j$ and thereby 
$\eus F_\ap$-multiplicities of the roots in $\Delta_\ap(0)^+$. Recall that $h=h(\Delta)$.
\begin{enumerate}
\item For $\GR{E}{6}$ and $\ap=\ap_1$ or $\ap_5$, $\Delta_\ap(0)$ is of type $\GR{D}{5}$ and $h-h(\GR{D}{5})=4$;
\item For $\GR{E}{7}$ and $\ap=\ap_1$, $\Delta_\ap(0)$ is of type $\GR{E}{6}$ and $h-h(\GR{E}{6})=6$;
\item For $\GR{D}{n}$ and $\ap=\ap_1$, $\Delta_\ap(0)$ is of type $\GR{D}{n-1}$ and $h-h(\GR{D}{n-1})=2$;
\item For $\GR{D}{n}$ and $\ap=\ap_{n-1}$ or $\ap_n$, $\Delta_\ap(0)$ is of type $\GR{A}{n-1}$ and 
$h-h(\GR{A}{n-1})=n-2$;
\item For $\GR{A}{n}$ and $\ap=\ap_{i}$ ($2\le i\le n-1$), $\Delta_\ap(0)$ is of type $\GR{A}{n-i}\times\GR{A}{i-1}$.
Therefore, the $\eus F_{\ap_i}$-multiplicity of $\mu\in \Delta_\ap(0)^+$ is either $h-h(\GR{A}{n-i})=i$ or 
$h-h(\GR{A}{i-1})=n-i+1$.
\item For $\GR{A}{n}$ and $\ap=\ap_1$ or $\ap_n$, $\Delta_\ap(0)$ is of type $\GR{A}{n-1}$  and 
$h-h(\GR{A}{n-1})=1$. Therefore here $\ess(\eus F_{\ap_1})=\ess(\eus F_{\ap_n})=\Delta^+$, as already 
computed in Example~\ref{ex:An}.
\end{enumerate}
The last item represents the only possibilities in which $(\theta,\ap)\ne 0$.
\end{ex}

\un{For {\sf (B)}}: \ Suppose now that $\theta$ is a fundamental weight. Write $\hat\ap$ for the unique simple 
root that is not orthogonal to $\theta$.  By assumption, $(\theta,\hat\ap^\vee)=1$. Therefore $\hat\ap$ is long,
$(\hat\ap,\theta^\vee)=1$, and $(\beta,\theta^\vee)=0$ for $\beta\in\Pi\setminus\{\hat\ap\}$. 
Hence $2=(\theta,\theta^\vee)=[\theta:\hat\ap]$ and
$(\gamma,\theta^\vee)=[\gamma:\hat\ap]$ for any $\gamma\in\Delta^+$.
In this situation, $\gH=\Delta_{\hat\ap}(1)\cup\Delta_{\hat\ap}(2)$ and $\Delta_{\hat\ap}(2)=\{\theta\}$.
Clearly, $\theta-\hat\ap$ is the unique root covered by $\theta$ and $\theta\succ\theta-\hat\ap\succ \hat\ap$.
Since $\theta-\hat\ap\in\Delta^+_l$, the shortest element $\hat w:=w_{\theta-\hat\ap,\hat\ap}$ is well-defined.
By the very definition of $\hat w$, we have $\hat w(\theta-\hat\ap)=\hat\ap$. We are going to prove that 
$\hat w$ is an involution, and the next assertion yields a first step towards that goal.

\begin{lm}   \label{lm:first-for-inv}
$\hat w(\hat\ap)=\theta-\hat\ap$. 
\end{lm}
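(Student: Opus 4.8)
\textbf{Plan for the proof of Lemma~\ref{lm:first-for-inv}.} The goal is to show $\hat w(\hat\ap)=\theta-\hat\ap$, where $\hat w=w_{\theta-\hat\ap,\hat\ap}$ is the shortest element taking $\theta-\hat\ap$ to $\hat\ap$. The natural approach is to exploit the symmetry $\gamma\mapsto\theta-\gamma$ on the relevant roots. Observe that $\theta\succ\theta-\hat\ap\succ\hat\ap$, so by Example~\ref{ex:left-factor} we can relate $\hat w$ to the longer element $w_{\theta,\hat\ap}$, whose inversion set is controlled by Lemma~\ref{lm:N(w_theta,ap)}: $\eus N(w_{\theta,\hat\ap})=\gH\setminus I(\hat\ap)_{min}$, and moreover $\mu\in\eus N(w_{\theta,\hat\ap})$ iff $\theta-\mu\in I(\hat\ap)_{min}\setminus\{\theta\}$.

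First I would compute directly. Since $(\theta-\hat\ap,\theta^\vee)=2-1=1$ and $(\hat\ap,\theta^\vee)=1$, both $\theta-\hat\ap$ and $\hat\ap$ lie in $\gH\cap\Delta_{\hat\ap}(1)$, and they are swapped by the involution $\mu\mapsto\theta-\mu$ of $\gH\setminus\{\theta\}$. The key computation is to track the image $\hat w(\hat\ap)$. I would use that $\hat w$ has the reduced-word structure governed by Proposition~\ref{prop:unique-short}: $\ell(\hat w)=\hot(\theta-\hat\ap)-\hot(\hat\ap)=\hot(\theta)-2\hot(\hat\ap)=h-1-2$. Applying $\hat w$ to $\hat\ap$, I would show $\hat w(\hat\ap)$ is again a long positive root (since $\hat w\in W$ is an isometry and $\hat w$ does not send $\hat\ap$ to a negative root — this needs checking that $\hat\ap\notin\eus N(\hat w^{-1})$, which follows because $\eus N(\hat w^{-1})\subset\eus N(w_{\theta,\hat\ap}^{\ -1})=\{\nu\mid(\nu,\hat\ap^\vee)=-1\}$ by Remark~\ref{rem:passage-dual}(3) and Eq.~\eqref{eq:w-inverse}, and $\hat\ap$ does not satisfy $(\hat\ap,\hat\ap^\vee)=-1$).

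The cleanest route, which I would pursue as the main line, is to compare $\hat w(\hat\ap)$ with $\theta-\hat\ap$ using the equation $\theta=(\theta-\hat\ap)+\hat\ap$. Applying $\hat w$ and using $\hat w(\theta-\hat\ap)=\hat\ap$ gives $\hat w(\theta)=\hat\ap+\hat w(\hat\ap)$. So it suffices to determine $\hat w(\theta)$ and show it equals $\theta$, for then $\hat w(\hat\ap)=\theta-\hat\ap$ as desired. To see $\hat w(\theta)=\theta$, note $\theta$ is the highest root: $\hat w(\theta)$ is a root of the same length as $\theta$, hence long, and I would argue it must again be $\theta$ by showing $\theta\notin\eus N(\hat w^{-1})$ (again via the inclusion into $\eus N(w_{\theta,\hat\ap}^{\ -1})$ and checking $(\theta,\hat\ap^\vee)=1\neq-1$), so $\hat w^{-1}(\theta)\in\Delta^+$; combined with the fact that $\hat w$ preserves $\hot$ modulo its action and that $\theta$ is the unique root of maximal height, one forces $\hat w(\theta)=\theta$.

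\textbf{Main obstacle.} The delicate point is the last step: concluding $\hat w(\theta)=\theta$ rather than merely that it is some long positive root. The inversion-set argument shows $\hat w(\theta)\in\Delta^+$, but uniqueness of $\theta$ as the highest root does not immediately give equality under a general Weyl element. I expect the cleanest fix is to show that $\hat w\in W_{\hat\ap}(0)$ (the parabolic subgroup generated by simple reflections $\sigma_\beta$, $\beta\neq\hat\ap$), which holds because the reduced word for $\hat w$ uses only simple reflections $\sigma_\beta$ with $\beta\in\Pi\setminus\{\hat\ap\}$ — precisely because $\theta-\hat\ap$ and $\hat\ap$ share the same $\hat\ap$-height, as in the proof of Proposition~\ref{prop:inv-set-for-height-1}. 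Since every $\beta\neq\hat\ap$ satisfies $(\beta,\theta)=0$ (as $\hat\ap$ is the unique simple root not orthogonal to $\theta$), each such $\sigma_\beta$ fixes $\theta$; hence $\hat w(\theta)=\theta$, and the identity $\hat w(\hat\ap)=\theta-\hat\ap$ follows at once. This last observation is the crux and should be stated explicitly.
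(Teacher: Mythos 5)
Your proposal is correct and, in its decisive final paragraph, coincides exactly with the paper's own proof: since $\theta-\hat\ap$ and $\hat\ap$ have the same $\hat\ap$-height, no reduced expression for $\hat w$ contains $\sigma_{\hat\ap}$, so $\hat w\in W_{\hat\ap}(0)$ fixes $\theta$ (every $\beta\in\Pi\setminus\{\hat\ap\}$ is orthogonal to $\theta$), and applying $\hat w$ to $(\theta-\hat\ap)+\hat\ap=\theta$ gives the claim. The preceding inversion-set explorations in your write-up are unnecessary once this observation is in hand; the paper dispenses with them entirely.
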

\begin{proof}
Since $\theta-\hat\ap$ and $\hat\ap$ have the same $\hat\ap$-height, a reduced expression for $\hat w$
does not contain $\sigma_{\hat\ap}$.  Therefore, $\hat w\in W_{\hat\ap}(0)=\langle \sigma_\beta\mid
\beta\in\Pi\setminus \{\hat\ap\}\rangle$ and $\hat w(\theta)=\theta$. Applying $\hat w$ to the equality
$(\theta-\hat\ap)+\hat\ap=\theta$, we obtain $\hat\ap+\hat w(\hat\ap)=\theta$, as required.
\end{proof}

\begin{prop}  \label{prop:hat-w-invol}
If $\theta$ is a fundamental weight, then $\hat w^2=1$.
\end{prop}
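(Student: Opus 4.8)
The plan is to show that $\hat w^2$ fixes $\theta-\hat\ap$, fixes $\hat\ap$, and fixes the hyperplane orthogonal to $\theta$ pointwise, which together force $\hat w^2=1$. First I would record what Lemma~\ref{lm:first-for-inv} already gives: since $\hat w(\theta-\hat\ap)=\hat\ap$ and $\hat w(\hat\ap)=\theta-\hat\ap$, applying $\hat w$ twice returns each of these two roots to itself, so $\hat w^2$ fixes both $\hat\ap$ and $\theta-\hat\ap$, hence fixes the two-dimensional subspace they span. Equivalently, $\hat w$ swaps $\hat\ap$ and $\theta-\hat\ap$, so $\hat w$ itself fixes their sum $\theta$ (as already noted in the proof of the lemma, $\hat w\in W_{\hat\ap}(0)$ so $\hat w(\theta)=\theta$) and negates their difference $\theta-2\hat\ap$.

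The key remaining point is that $\hat w$ acts as the identity on the orthogonal complement of the plane $\langle\hat\ap,\theta-\hat\ap\rangle = \langle\hat\ap,\theta\rangle$ inside $V$. To see this I would use that $\hat w\in W_{\hat\ap}(0)$, the parabolic subgroup generated by $\{\sigma_\beta\mid\beta\in\Pi\setminus\{\hat\ap\}\}$, which is the Weyl group of the root subsystem $\Delta_{\hat\ap}(0)$. Because $\theta$ is a \emph{fundamental} weight, $\Delta_{\hat\ap}(0)$ is precisely the root system orthogonal to the fundamental coweight dual to $\hat\ap$; more usefully, $\hat w(\theta)=\theta$ means $\hat w$ lives in the stabiliser of $\theta$ in $W$. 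I would argue that the length of $\hat w$, namely $\hot(\theta-\hat\ap)-\hot(\hat\ap)=h-2-1=h-3$ in the ADE case, together with the fact that $\hat w$ is the \emph{shortest} element carrying $\theta-\hat\ap$ to $\hat\ap$, pins $\hat w$ down as a specific involution. The cleanest route: show $\hat w^{-1}$ is also the shortest element taking $\theta-\hat\ap$ to $\hat\ap$, so that uniqueness in Proposition~\ref{prop:unique-short} forces $\hat w^{-1}=\hat w$.

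Concretely, I would verify $\hat w^{-1}(\theta-\hat\ap)=\hat\ap$ and $\ell(\hat w^{-1})=\ell(\hat w)$, then invoke the uniqueness clause of Proposition~\ref{prop:unique-short} (with $\gamma=\theta-\hat\ap$, $\mu=\hat\ap$) to conclude $\hat w^{-1}=\hat w$. The length equality is immediate since $\ell(w^{-1})=\ell(w)$ always. The substantive claim is $\hat w^{-1}(\theta-\hat\ap)=\hat\ap$, which is exactly the content of applying $\hat w$ to $\hat w^{-1}(\theta-\hat\ap)$ and checking it lands on the right root; here I would feed in both Lemma~\ref{lm:first-for-inv} and the defining equation $\hat w(\theta-\hat\ap)=\hat\ap$. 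Since $\hat w$ swaps the pair $\{\hat\ap,\theta-\hat\ap\}$, its inverse swaps the same pair, giving $\hat w^{-1}(\theta-\hat\ap)=\hat\ap$ directly.

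\textbf{Main obstacle.} The delicate step is justifying that being the \emph{minimal-length} element taking $\theta-\hat\ap$ to $\hat\ap$ is a property shared by $\hat w^{-1}$, so that uniqueness applies. One must ensure $\hat w^{-1}$ genuinely realises this minimal length rather than merely \emph{some} expression of length $h-3$; but since $\ell(\hat w^{-1})=\ell(\hat w)=(\rho,(\theta-\hat\ap)^\vee)-(\rho,\hat\ap^\vee)$ is exactly the lower bound from Proposition~\ref{prop:unique-short}, and $\hat w^{-1}$ does carry $\theta-\hat\ap$ to $\hat\ap$, it automatically \emph{is} of minimal length, so uniqueness forces $\hat w^{-1}=\hat w$, i.e. $\hat w^2=1$. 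I expect the only real care needed is confirming the pair-swap $\hat w^{-1}(\theta-\hat\ap)=\hat\ap$ cleanly, which follows formally from the two fixed facts about $\hat w$.
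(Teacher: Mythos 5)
Your final argument is correct, and it takes a genuinely different route from the paper. You deduce from Lemma~\ref{lm:first-for-inv} that $\hat w$ swaps $\hat\ap$ and $\theta-\hat\ap$, so $\hat w^{-1}(\theta-\hat\ap)=\hat\ap$; since $\ell(\hat w^{-1})=\ell(\hat w)=(\rho,(\theta-\hat\ap)^\vee-\hat\ap^\vee)$ is exactly the minimal length, the uniqueness clause of Proposition~\ref{prop:unique-short} (with $\gamma=\theta-\hat\ap$, $\mu=\hat\ap$) forces $\hat w^{-1}=\hat w$. The paper instead proves the containment $\eus N(\hat w^{-1})\subset\eus N(\hat w)$ (which suffices, as the two inversion sets have equal cardinality and an element of $W$ is determined by its inversion set): it computes $\eus N(\hat w^{-1})=\{\nu\in\Delta_{\hat\ap}(0)^+\mid \nu+\hat\ap\in\Delta_{\hat\ap}(1)\}$ from Eq.~\eqref{eq:w-inverse} and $\eus N(\hat w)=\{\mu\in\Delta_{\hat\ap}(0)^+\mid (\theta-\hat\ap)-\mu\in\Delta_{\hat\ap}(1)\}$ from Proposition~\ref{prop:inv-set-for-height-1}, and then maps one set into the other via $\nu\mapsto\theta-(\nu+\hat\ap)$. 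Both proofs hinge on the same input (Lemma~\ref{lm:first-for-inv}); yours is shorter and purely formal, needing no inversion-set computations at all, while the paper's computation produces the explicit descriptions of $\eus N(\hat w)$ and $\eus N(\hat w^{-1})$ as a by-product, in the same spirit as the arguments of Section~\ref{sect:essential} where such descriptions are exploited.

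One caution: your opening plan --- that $\hat w$ (hence $\hat w^2$) acts as the identity on the orthogonal complement of the plane $\langle\hat\ap,\theta\rangle$ --- is not just harder to prove, it is false. For $\GR{D}{4}$ with $\hat\ap=\ap_2$ one finds $\hat w=\sigma_1\sigma_3\sigma_4$, a product of three commuting reflections, whose $(-1)$-eigenspace is three-dimensional; so $\hat w$ is an involution without being a reflection. Since you discarded that plan in favour of the uniqueness argument, the proof you actually give is unaffected.
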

\begin{proof}
It suffices to prove that $\eus N(\hat w^{-1})\subset \eus N(\hat w)$. Since 
$\sigma_{\hat\ap}(\theta)=\theta-\hat\ap$,   $\hat w\sigma_{\hat\ap}=w_{\theta,\hat\ap}$ is the 
shortest element of $W$ taking $\theta$ to $\hat\ap$. Therefore, using Lemma~\ref{lm:first-for-inv}, we obtain
\[
\eus N(\hat w^{-1})=\eus N(w_{\theta,\hat\ap}^{\ -1})\setminus \{\hat w(\hat\ap)\}=
\eus N(w_{\theta,\hat\ap}^{\ -1})\setminus \{\theta-\hat\ap\} .
\]
Recall that $\eus N(w_{\theta,\hat\ap}^{\ -1})=\{\nu\in\Delta^+\mid \nu+\hat\ap\in \Delta^+\}$, see 
Eq.~\eqref{eq:w-inverse}. 
It follows that
\[
    \eus N(\hat w^{-1})=\{\nu\in\Delta^+\mid \nu+\hat\ap\in \Delta^+\setminus\{\theta\}\} .
\]
Because $\theta$ is the only root with $\hat\ap$-height equal to $2$, this means that $[\nu:\hat\ap]=0$.
That is, 
\[
    \eus N(\hat w^{-1})=\{\nu\in\Delta_{\hat\ap}(0)^+\mid \nu+\hat\ap\in \Delta_{\hat\ap}(1)\} .
\]
On the other hand, Proposition~\ref{prop:inv-set-for-height-1} with $\ap=\hat\ap$ shows that
\[
    \eus N(\hat w)=\{\mu\in\Delta_{\hat\ap}(0)^+\mid (\theta-\hat\ap)-\mu\in \Delta_{\hat\ap}(1)\} .
\]
Finally, if $\nu\in \eus N(\hat w^{-1})$, then $\nu+\hat\ap\in\Delta_{\hat\ap}(1)$, hence 
$\theta-(\nu+\hat\ap)$ is a root (in $\Delta_{\hat\ap}(1)$), i.e.,
$(\theta-\hat\ap)-\nu\in \Delta_{\hat\ap}(1)$. Thus, $\nu\in \eus N(\hat w)$ and we are done.
\end{proof}

\begin{thm}  \label{teor:ess-theta-fundam}
Suppose that $\Delta$ is simply-laced, $\theta$ is fundamental, and $\hat\ap$ is the unique simple root that is not orthogonal to 
$\theta$. Then\/ $\ess (\eus F_{\hat\ap})=\gH$ and $\mathrm{defect}(\eus F_{\hat\ap})=h-2$. 
\end{thm}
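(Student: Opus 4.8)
The plan is to compute the full $\eus F_{\hat\ap}$-statistic $n_{\hat\ap}$ and then read off $\ess(\eus F_{\hat\ap})=\{\mu\mid n_{\hat\ap}(\mu)=1\}$. First I would record the reductions coming from the hypothesis. Since $(\hat\ap,\theta)\ne 0$, i.e.\ $\hat\ap\in\gH$, Section~\ref{subs:1.1} gives $I(\hat\ap)_{min}=I(\hat\ap)_{max}=:I$. Because $\theta$ is fundamental, $(\gamma,\theta^\vee)=[\gamma:\hat\ap]$ for all $\gamma$, so $\gH=\Delta_{\hat\ap}(1)\sqcup\Delta_{\hat\ap}(2)$ with $\Delta_{\hat\ap}(2)=\{\theta\}$, and $I=\{\theta\}\sqcup I_1$, where $I_1:=I\cap\Delta_{\hat\ap}(1)$. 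Lemma~\ref{lm:N(w_theta,ap)} then yields the disjoint decomposition $\Delta_{\hat\ap}(1)=I_1\sqcup\eus N(w_{\theta,\hat\ap})$, together with the pairing $\nu\leftrightarrow\theta-\nu$ interchanging the two blocks.

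Next I would show $\gH\subset\ess(\eus F_{\hat\ap})$. The elements of $\eus F_{\hat\ap}$ split into $\tilde w_\theta$ and the $\tilde w_\gamma$ with $\gamma\in I_1$; for the latter $[\gamma:\hat\ap]=1$, so Proposition~\ref{prop:inv-set-for-height-1} applies and $\eus N(w_{\gamma,\hat\ap})\subset\Delta_{\hat\ap}(0)^+$. Hence for $\mu\in\gH$ no height-one element inverts $\mu$: if $\mu\in I$ then $n_{\hat\ap}(\mu)=1$ by Theorem~\ref{thm:main}, while if $\mu\in\gH\setminus I=\eus N(w_{\theta,\hat\ap})$ then $\mu$ is neither the distinguished root of any $\tilde w_\gamma$ (those lie in $I$) nor in any $\eus N(w_{\gamma,\hat\ap})$ with $\gamma\in I_1$ (height mismatch), so the only contribution is $\gamma=\theta$ and again $n_{\hat\ap}(\mu)=1$.

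The heart of the argument is the opposite inclusion: $n_{\hat\ap}(\mu)\ge 2$ for $\mu\in\Delta_{\hat\ap}(0)^+=\Delta^+\setminus\gH$. Here $\mu\notin\gH$ kills the $\tilde w_\theta$-contribution, and Proposition~\ref{prop:inv-set-for-height-1} gives $n_{\hat\ap}(\mu)=\#(I_1\cap\Phi_\mu(1))$, where $\Phi_\mu(1):=\Phi_\mu\cap\Delta_{\hat\ap}(1)=\{\nu\in\Delta_{\hat\ap}(1)\mid\nu-\mu\in\Delta^+\}$. Exactly as in the proof of Theorem~\ref{thm:ab-nilrad}(i)---using $(\theta,\mu)=0$ to exclude $\theta$ from $\Phi_\mu$, plus Lemma~\ref{lm:Phi-gamma} applied both in $\Delta$ and in the irreducible component $\Delta_{\hat\ap}(0)_j\ni\mu$---one gets $\#\Phi_\mu(1)=h-h_j$. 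I would then introduce the map $q\colon\nu\mapsto\theta-(\nu-\mu)=(\theta-\nu)+\mu$ on $\Phi_\mu(1)$. It is a well-defined involution of $\Phi_\mu(1)$, and it is fixed-point-free: $q(\nu)=\nu$ forces $2\nu=\theta+\mu$, which is impossible since $\|\theta+\mu\|^2=2\|\theta\|^2\ne 4\|\theta\|^2$ because $(\theta,\mu)=0$. Finally, each $q$-orbit $\{\nu,q(\nu)\}$ meets $I_1$: if $\nu\notin I_1$ then $\nu\in\eus N(w_{\theta,\hat\ap})$, so $\theta-\nu\in I\setminus\{\theta\}\subset I_1$ by Lemma~\ref{lm:N(w_theta,ap)}, and then $q(\nu)=(\theta-\nu)+\mu\in I$ by the $\be$-stability of $I$, whence $q(\nu)\in I_1$. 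As there are $\frac{1}{2}\#\Phi_\mu(1)$ orbits, this gives $n_{\hat\ap}(\mu)=\#(I_1\cap\Phi_\mu(1))\ge\frac{1}{2}(h-h_j)$.

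It remains to guarantee $h-h_j\ge 4$; then $n_{\hat\ap}(\mu)\ge 2$, so $\mu\notin\ess$, and with the second paragraph this yields $\ess(\eus F_{\hat\ap})=\gH$. This last point is where I expect the only real friction: the pairing argument only produces ``at least one element of $I_1$ per orbit,'' so I must know there are at least two orbits. I would settle it by the finite list of case (B): for $\GR{D}{n}$ the Levi $\Delta_{\hat\ap}(0)$ is $\GR{A}{1}\times\GR{D}{n-2}$ (giving $h-h_j\in\{2n-4,4\}$), and for $\GR{E}{6},\GR{E}{7},\GR{E}{8}$ it is $\GR{A}{5},\GR{D}{6},\GR{E}{7}$ (giving $h-h_j=6,8,12$); in every component $h-h_j\ge 4$. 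The defect then follows from the cardinalities: $\#\gH=2(\rho,\theta^\vee)-1=2h-3$ (the count in the proof of Lemma~\ref{lm:N(w_theta,ap)}) and $\#\eus F_{\hat\ap}=\#I=(\rho,\theta^\vee)=h-1$, so $\mathrm{defect}(\eus F_{\hat\ap})=h-2$.
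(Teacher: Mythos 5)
Your proof is correct, and its core---the hard inclusion $\ess(\eus F_{\hat\ap})\subset\gH$---is the paper's argument in different clothing: you reduce to $n_{\hat\ap}(\mu)=\#\bigl(I_1\cap\Phi_\mu(1)\bigr)$ for $\mu\in\Delta_{\hat\ap}(0)^+$ via Proposition~\ref{prop:inv-set-for-height-1} (where $I_1=\ihp\cap\Delta_{\hat\ap}(1)$), count $\#\Phi_\mu(1)=h-h_j$ exactly as in Theorem~\ref{thm:ab-nilrad}(i), and your involution $q(\nu)=(\theta-\nu)+\mu$ is precisely the paper's map sending a ``bad'' sum to a ``good'' one; the paper phrases it as an injection $\Phi_\mu(1)^{bad}\hookrightarrow\Phi_\mu(1)^{good}$ rather than as a fixed-point-free involution, but both give $n_{\hat\ap}(\mu)\ge\frac{1}{2}(h-h_j)$. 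You genuinely deviate in two places. First, for $\gH\subset\ess(\eus F_{\hat\ap})$ you apply Proposition~\ref{prop:inv-set-for-height-1} directly to every $\gamma\in I_1$ (legitimate, since each such $\gamma$ has $[\gamma:\hat\ap]=1$ and $\gamma\curge\hat\ap$ by the result of \cite{jems} recalled in Section~\ref{sect:dva}), which yields $\eus N(w_{\gamma,\hat\ap})\subset\Delta_{\hat\ap}(0)^+$ at once; the paper instead proves this first for $\gamma=\theta-\hat\ap$ using the involution $\hat w$ (Lemma~\ref{lm:first-for-inv}, Proposition~\ref{prop:hat-w-invol}) and then propagates to smaller $\gamma$ via left factors and Proposition~\ref{lm:lezhit-nizhe}. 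Your shortcut shows that the involution machinery is not actually needed for this theorem. Second, for the inequality $h-h_j\ge 4$ you fall back on the classification ($\GR{A}{1}\times\GR{D}{n-2}$ for $\GR{D}{n}$; $\GR{A}{5},\GR{D}{6},\GR{E}{7}$ for $\GR{E}{6},\GR{E}{7},\GR{E}{8}$), which agrees with Example~\ref{ex:theta-fund-h-hj}, whereas the paper argues uniformly: descending from $\theta$ into $\Delta_{\hat\ap}(0)^+$ requires two subtractions of $\hat\ap$, and since $\theta-2\hat\ap-\ap$ is never a root in the simply-laced case, at least four subtractions in all. So the paper's proof of this point is classification-free while yours trades that for a short table; otherwise the $\gH$-direction, the counting, and the defect computation $(2h-3)-(h-1)=h-2$ all match.
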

\begin{proof}
Recall that $\gH=I(\hat\ap)_{min}\sqcup \eus N(w_{\theta,\hat\ap})$ (Lemma~\ref{lm:N(w_theta,ap)}).
Since $\hat\ap\in\gH$, we have $I(\hat\ap)_{max}=I(\hat\ap)_{min}$ \cite[Theorem\,5.1]{imrn} and write 
below $\ihp$ for this common ideal.
\\ \indent 
{\bf 1)} \ We first prove that $\gH \subset \ess (\eus F_{\hat\ap})$. 
As $I(\hat\ap)\subset \ess (\eus F_{\hat\ap})$ (Theorem~\ref{thm:main}), our  task is to prove 
that $\eus N(w_{\theta,\hat\ap})\subset \ess (\eus F_{\hat\ap})$.

The proof of Theorem~\ref{thm:main} shows that $\bigcup_{\gamma\in\ihp} \eus N(w_{\gamma,\hat\ap})=
\Delta^+\setminus \ihp$ and the $\eus F_{\hat\ap}$-multiplicity of $\mu\in \Delta^+\setminus \ihp$ equals the number of $\gamma\in\ihp$ such that  $\mu\in  \eus N(w_{\gamma,\hat\ap})$. Therefore, the required 
inclusion is equivalent to that 
\[
 \eus N(w_{\theta ,\hat\ap})\cap 
\bigl(\bigcup_{\gamma\in\ihp\setminus\{\theta\}} \eus N(w_{\gamma,\hat\ap})\bigr)=\varnothing .
\]
We have $ \eus N(w_{\theta ,\hat\ap})\subset \Delta_{\hat\ap}(1)$, see Lemma~\ref{lm:mu-min}.
Therefore, it suffices to prove that $\eus N(w_{\gamma,\hat\ap})\subset \Delta_{\hat\ap}(0)^+$ for $\gamma\ne\theta$.  First, consider $\gamma=\theta-\hat\ap$, the only root covered by $\theta$. As above,  we set
$\hat w=w_{\theta-\hat\ap,\hat\ap}$. Then $\hat w\sigma_{\hat\ap}=w_{\theta,\hat\ap}$ and 
\[
   \eus N(\hat w)=\sigma_{\hat\ap}\bigl( \eus N((w_{\theta,\hat\ap}) \setminus \{\hat\ap\}\bigr) .
\]
Here we have to prove that if $\nu\in  \eus N((w_{\theta,\hat\ap}) \setminus \{\hat\ap\}$, then $(\nu,\hat\ap)>0$
and thereby $\sigma_{\hat\ap}(\nu)=\nu-\hat\ap\in \Delta_{\hat\ap}(0)^+$. Let us exclude the possibility that
$(\nu, \hat\ap) \le 0$.

(a) Assume that $(\nu, \hat\ap)< 0$. Then $\hat\ap+\nu$ has the $\hat\ap$-height $2$, i.e., 
$\hat\ap+\nu=\theta$.
However, $\theta-\hat\ap\in\ihp$, since $\hat\ap\in \eus N(w_{\theta,\hat\ap})$, see Lemma~\ref{lm:N(w_theta,ap)}. A contradiction!

(b) Assume that  $(\nu, \hat\ap)=0$. Then $\hat w(\nu)=w_{\theta,\hat\ap}(\nu)$ is negative. Since
$\hat w$ is an involution (Prop.~\ref{prop:hat-w-invol}), $\nu\in \eus N(\hat w^{-1})\subset 
\eus N(w_{\theta,\hat\ap}^{\ -1})$. Hence one must have $(\nu,\hat\ap^\vee)=-1$. A contradiction!

This proves that $\eus N(\hat w)\subset \Delta_{\hat\ap}(0)^+$. For any other $\gamma\in \ihp$, we have
$\gamma\prec\theta-\hat\ap$. Then $w_{\gamma,\hat\ap}$ is a left factor of $\hat w$ and hence 
$\eus N(w_{\gamma,\hat\ap})$ is located below $\eus N(\hat w)$, see Proposition~\ref{lm:lezhit-nizhe} and 
Example~\ref{ex:left-factor}. Therefore, $\eus N(w_{\gamma,\hat\ap})$ lies in $\Delta_{\hat\ap}(0)^+$, too.

{\bf 2)} \ Let us prove that if $\mu\in \Delta_{\hat\ap}(0)^+$, then $\mu\not\in \ess (\eus F_{\hat\ap})$. In other
words, we have to prove that $n_{\hat\ap}(\mu)\ge 2$. We use the same approach as in the proof of 
Theorem~\ref{thm:ab-nilrad}. Consider
\[
  \Phi_\mu=\{\gamma\in \Delta^+\mid \gamma-\mu\in\Delta^+\}=\Phi_\mu(0)\cup\Phi_\mu(1)\cup\Phi_\mu(2) ,
\]
where $\Phi_\mu(i)$ denotes the subset in which $\gamma\in\Delta_{\hat\ap}(i)$. Note that 
$\Delta_{\hat\ap}(2)=\{\theta\}$ and $\theta-\mu$ is not a root. Hence $\Phi_\mu(2)=\varnothing$. Recall that
$\Delta_{\hat\ap}(0)$ is the disjoint union of the irreducible subsystems $\Delta_{\hat\ap}(0)_j, \ j\in J$, and
$h_j$ is the Coxeter number of $\Delta_{\hat\ap}(0)_j$. By
Lemma~\ref{lm:Phi-gamma}, $\#\Phi_\mu=h-1-\hot(\mu)$ and $\#\Phi_\mu(0)=h_j-1-\hot(\mu)$ if
$\mu\in \Delta_{\hat\ap}(0)_j$.  Hence $\#\Phi_\mu(1)=h-h_j$. However, 
$\Delta_{\hat\ap}(1)=\ihp\cup \eus N(w_{\theta,\hat\ap})$, and we only need to count the subset of $\Phi_\mu(1)$
corresponding to $\gamma\in \ihp$.

Given $\mu\in \Delta_{\hat\ap}(0)^+$, a sum $\mu+\mu'=\gamma\in \Delta_{\hat\ap}(1)$ is said to be {\it good} (resp. {\it bad}) if $\gamma\in \ihp$ (resp. $\gamma\in \eus N(w_{\theta,\hat\ap})$). Accordingly, 
$\Phi_\mu(1)=\Phi_\mu(1)^{good}\cup \Phi_\mu(1)^{bad}$ and $n_{\hat\ap}(\mu)=\#\Phi_\mu(1)^{good}$. 
If $\mu+\mu'$ is bad, then 
$\nu= \theta-(\mu+\mu')$ is a root and, moreover, $\nu\in \ihp$, see Lemma~\ref{lm:N(w_theta,ap)}. 
Since $\mu+\mu'+\nu=\theta$ and $\mu'\in
\Delta_{\hat\ap}(1)$, $\mu+\nu$ is also a root. Furthermore, since $\nu\in\ihp$, we have $\nu+\mu\in\ihp$, too.
Thus, starting with any bad sum $\mu+\mu'$, we obtain a good sum $\mu+\nu$. This yields an injection
$\Phi_\mu(1)^{bad}\hookrightarrow \Phi_\mu(1)^{good}$ and hence 
\[
   n_{\hat\ap}(\mu)=\#\Phi_\mu(1)^{good}\ge \frac{1}{2}(h-h_j) .
\]
It remains to prove that $h-h_j\ge 4$ for any irreducible subsystem $\Delta_{\hat\ap}(0)_j$ of 
$\Delta_{\hat\ap}(0)$. In other words, $\hot(\theta)-\hot(\mu)\ge 4$ for any $\mu\in\Delta_{\hat\ap}(0)^+$.
Indeed, since $[\theta:\hat\ap]=2$, a passage from $\theta$ to $\mu$ must contain two subtractions of
$\hat\ap$. Because $\theta-2\hat\ap\not\in\Delta$, such a passage should also include a subtraction of 
another $\ap\in\Pi$, necessarily adjacent to $\hat\ap$. Then one might assume that 
$\theta-2\hat\ap-\ap$ was a root
in $\Delta_{\hat\ap}(0)^+$. But $\theta-2\hat\ap-\ap$ is not a root in the simply-laced case! 
(Cf. the proof of Lemma~\ref{lm:mu-min}.) Thus, one needs 
at least $4$ subtractions.
See also the following Example. 

{\bf 3)} \ Here $\mathrm{defect}(\eus F_{\hat\ap})=\#\eus N(w_{\theta,\hat\ap})=\hot(\theta)-1=h-2$.
\end{proof}
\begin{ex}  \label{ex:theta-fund-h-hj}  We provide the differences $h-h_j$ for the irreducible subsystems of
$\Delta_{\hat\ap}(0)$ in all simply-laced cases with fundamental $\theta$.
\begin{enumerate}
  \item For $\GR{E}{6}$,  
$\Delta_{\hat\ap}(0)$ is of type $\GR{A}{5}$ and $h-h(\GR{A}{5})=12-6=6$;
  \item For $\GR{E}{7}$, 
$\Delta_{\hat\ap}(0)$ is of type $\GR{D}{6}$ and $h-h(\GR{D}{6})=18-10=8$;
  \item For $\GR{E}{8}$, 
$\Delta_{\hat\ap}(0)$ is of type $\GR{E}{7}$ and 
$h-h(\GR{E}{7})=30-18=12$;
\item For $\GR{D}{n}$ ($n\ge 4$), 
$\Delta_{\hat\ap}(0)$ is of type $\GR{A}{1}\times \GR{D}{n-2}$. Then  $h-h(\GR{D}{n-2})=(2n-2)-(2n-6)=4$ and $h-h(\GR{A}{1})=(2n-2)-2=2n-4\ge 4$.
\end{enumerate}
\end{ex}

\begin{rmk}
Unlike the case in which $I(\ap)_{max}$ corresponds to an abelian nilradical, see Theorem~\ref{thm:ab-nilrad}, the $\eus F_{\hat\ap}$-multiplicities
are not constant on the irreducible subsystems of $\Delta_{\hat\ap}(0)$. The reason is that different $\mu$ can participate in a different number of {\it good\/} and {\it bad\/} sums.
\end{rmk}

\section{Some speculations}  \label{sect:fin}

Our previous results on \MS\ of the form $\eus F_\ap$ ($\ap\in\Pi=\Pi_l$) describe $\ess(\eus F_\ap)$
for
\begin{itemize}
\item all simple roots for $\GR{A}{n}$;
\item $\ap_1,\ap_2=\hat\ap,\ap_{n-1},\ap_n$  for $\GR{D}{n}$ ($n\ge 4$); 
\item $\ap_1,\ap_5,\ap_6=\hat\ap$  for $\GR{E}{6}$;
\item $\ap_1,\ap_6=\hat\ap$  for $\GR{E}{7}$;
\item $\ap_1=\hat\ap$  for $\GR{E}{8}$.
\end{itemize}
The remaining cases concern the series  $\GR{D}{n}$ and  $\GR{E}{n}$, where $\theta$ is fundamental.
Our partial computations  
suggest the following

\begin{conj}    \label{conj:ess-fund}
If $\Delta$ is simply-laced and $\theta$ is fundamental, 
then\/ $\bigl(\ess(\eus F_{\ap})\setminus I(\ap)_{max}\bigr)\subset \gH$.
\end{conj}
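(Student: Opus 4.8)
The plan is to recast the conjecture as a lower bound on the $\eus F_\ap$-statistic and then mimic the proof of Theorem~\ref{teor:ess-theta-fundam}. Since $\theta$ is fundamental, $(\gamma,\theta^\vee)=[\gamma:\hat\ap]$ for every root, so a positive root lies outside $\gH$ precisely when it lies in $\Delta_{\hat\ap}(0)^+$. As $\ess(\eus F_\ap)=\{\gamma\mid n_\ap(\gamma)=1\}$ and $I(\ap)_{max}\subset\ess(\eus F_\ap)$ (Theorem~\ref{thm:main}), the conjecture is equivalent to the bound $n_\ap(\mu')\ge 2$ for every $\mu'\in\Delta_{\hat\ap}(0)^+\setminus I(\ap)_{max}$. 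Two remarks make witnesses cheap to produce. First, any decomposition $\gamma=\mu+\mu'$ with $\mu,\gamma\in I(\ap)_{max}$ forces $\mu'\in\eus N(w_{\gamma,\ap})$: applying $w_{\gamma,\ap}$ gives $\ap=w_{\gamma,\ap}(\mu)+w_{\gamma,\ap}(\mu')$ with $w_{\gamma,\ap}(\mu)>0$ by Lemma~\ref{lm:lezhit-vne-ideala}, hence $w_{\gamma,\ap}(\mu')<0$. Second, when $[\gamma:\ap]=1$ and $\mu'\in\Delta_\ap(0)^+$, Proposition~\ref{prop:inv-set-for-height-1} places $\mu'$ in $\eus N(w_{\gamma,\ap})$ as soon as $\gamma-\mu'\in\Delta^+$.

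First I would transport the good/bad bookkeeping of Theorem~\ref{teor:ess-theta-fundam} verbatim. Splitting $\Phi_{\mu'}=\{\gamma\mid\gamma-\mu'\in\Delta^+\}$ by the $\hat\ap$-height of $\gamma$, one has $\Phi_{\mu'}(2)=\varnothing$ (as $\theta-\mu'\notin\Delta$) and, exactly as in Theorem~\ref{teor:ess-theta-fundam}(2), $\#\Phi_{\mu'}(1)=h-h_j\ge 4$, where $h_j$ is the Coxeter number of the irreducible component $\Delta_{\hat\ap}(0)_j$ containing $\mu'$. On the layer $\Delta_{\hat\ap}(1)=(I(\ap)_{min}\setminus\{\theta\})\sqcup\eus N(w_{\theta,\ap})$ (Lemma~\ref{lm:N(w_theta,ap)}) I would call a decomposition $\gamma=\mu'+\mu''$ \emph{good} if $\gamma\in I(\ap)_{min}$ and \emph{bad} if $\gamma\in\eus N(w_{\theta,\ap})$. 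The injection \emph{bad}$\,\hookrightarrow\,$\emph{good} of Theorem~\ref{teor:ess-theta-fundam} — sending $\gamma$ to $\theta-\mu''=\mu'+(\theta-\gamma)\in I(\ap)_{min}$, using that $\eta\mapsto\theta-\eta$ preserves $\Delta_{\hat\ap}(1)$ and that $\theta-\gamma\in I(\ap)_{min}$ — then yields at least two good roots $\gamma\in I(\ap)_{min}$ with $\gamma-\mu'\in\Delta^+$.

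The crux is to promote these good roots to genuine inversions $\mu'\in\eus N(w_{\gamma,\ap})$. A good root $\gamma$ is automatically a witness in either of two situations: if the complementary summand $\gamma-\mu'$ again lies in $I(\ap)_{max}$, by the first remark above; or if $[\gamma:\ap]=1$ and $\mu'\in\Delta_\ap(0)^+$, by the second. When $\ap=\hat\ap$ every good root has $\hat\ap$-height one, the second situation always applies, and Theorem~\ref{teor:ess-theta-fundam} drops out. For $\ap\ne\hat\ap$ this breaks down: $I(\ap)_{min}$ contains $\theta$ and, more generally, roots of $\ap$-height $\ge 2$ (indeed $[\theta:\ap]$ can be as large as $6$, e.g. in $\GR{E}{8}$), for which Proposition~\ref{prop:inv-set-for-height-1} has no counterpart, so a good root $\gamma$ of $\ap$-height $\ge 2$ whose partner $\gamma-\mu'$ escapes the ideal falls into neither situation. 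A companion difficulty is that a root orthogonal to $\theta$ may have $[\mu':\ap]\ge 1$; then $\mu'\notin\Delta_\ap(0)^+$, every ideal root of $\ap$-height one has its inversion set inside $\Delta_\ap(0)^+$ and so contributes nothing, and the whole multiplicity must be carried by the ideal roots of higher $\ap$-height.

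I expect this higher-$\ap$-height obstruction to be the main one, and would attack it by seeking an analogue of Proposition~\ref{prop:inv-set-for-height-1}: factoring $w_{\gamma,\ap}$ along the $[\gamma:\ap]-1$ occurrences of $\sigma_\ap$ in its reduced word (for $[\gamma:\ap]=2$ this is a single factorization $w_{\gamma,\ap}=w''\sigma_\ap w'$ with $w',w''\in W_\ap(0)$, $w'$ carrying $\gamma$ inside $\Delta_\ap(2)$ to a root $\gamma_\ast$ with $(\gamma_\ast,\ap^\vee)=1$ and $w''$ carrying $\gamma_\ast-\ap\in\Delta_\ap(1)$ down to $\ap$), and unwinding $\eus N(w''\sigma_\ap w')=\eus N(w')\sqcup(w')^{-1}\eus N(w''\sigma_\ap)$ into a description of $\eus N(w_{\gamma,\ap})$ by multi-step root strings descending from $\gamma$. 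The real work is to make such a description interact cleanly with the good/bad count, that is, to disentangle the two gradings $[\,\cdot:\ap]$ and $[\,\cdot:\hat\ap]$ simultaneously; this is presumably why the statement remains conjectural. Short of a uniform argument, the conjecture reduces to a finite verification over the remaining pairs $(\Delta,\ap)$ in types $\GR{D}{n}$ and $\GR{E}{n}$ with $\theta$ fundamental and $\ap\ne\hat\ap$, consistent with its status as a computationally supported speculation.
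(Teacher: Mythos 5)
The statement you were given is Conjecture~\ref{conj:ess-fund} of the paper: the author offers \emph{no proof} of it, only the evidence of the two proved special cases (Theorem~\ref{thm:ab-nilrad} for $[\theta:\ap]=1$ and Theorem~\ref{teor:ess-theta-fundam} for $\ap=\hat\ap$) plus explicit computations such as Example~\ref{ex:E6}. So there is no paper proof to compare against, and your refusal to claim a complete argument is the correct assessment of the situation, not a defect of your write-up.

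Within that constraint, your partial analysis is technically sound and reproduces exactly the boundary of what the paper's machinery achieves. The reformulation as the bound $n_\ap(\mu')\ge 2$ for all $\mu'\in\Delta_{\hat\ap}(0)^+\setminus I(\ap)_{max}$ is correct. The transport of the good/bad dichotomy and of the injection $\Phi_{\mu'}(1)^{bad}\hookrightarrow\Phi_{\mu'}(1)^{good}$ does go through for arbitrary $\ap$, since it uses only the $\hat\ap$-grading, Lemma~\ref{lm:N(w_theta,ap)} applied to $\ap$, the three-root argument for $\mu'+\nu\in\Delta$, and the ideal property of $I(\ap)_{max}$ together with $I(\ap)_{min}=I(\ap)_{max}\cap\gH$; hence one always has at least $\tfrac{1}{2}(h-h_j)\ge 2$ roots $\gamma\in I(\ap)_{min}$ with $\gamma-\mu'\in\Delta^+$. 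And you put your finger on the precise obstruction: such a $\gamma$ only certifies $\mu'\in\eus N(w_{\gamma,\ap})$ when either the partner $\gamma-\mu'$ lies in $I(\ap)_{max}$ (the argument of Theorem~\ref{thm:main}, via Lemma~\ref{lm:lezhit-vne-ideala}), or $[\gamma:\ap]=1$ and $\mu'\in\Delta_\ap(0)^+$ (Proposition~\ref{prop:inv-set-for-height-1}). For $\ap\ne\hat\ap$ the good roots may have $\ap$-height $\ge 2$, where no description of $\eus N(w_{\gamma,\ap})$ is available, and a root $\mu'$ orthogonal to $\theta$ may have $[\mu':\ap]\ge 1$, in which case the height-one ideal roots contribute nothing at all; this double-grading problem is exactly why the paper stops at the two proved cases. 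Your proposed factorization of $w_{\gamma,\ap}$ through the occurrences of $\sigma_\ap$ is a plausible direction but remains a sketch, as you acknowledge. In short: you have correctly located the open problem rather than papered over it, and every step you do assert checks out against the paper's lemmas.
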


Since $\gH=I(\ap)_{min}\sqcup \eus N(w_{\theta,\ap})$ (Lemma~~\ref{lm:N(w_theta,ap)}) and
$I(\ap)_{min}=I(\ap)_{max}\cap\gH$, it can also be restated as
$\ess(\eus F_{\ap})\subset I(\ap)_{max}\sqcup \eus N(w_{\theta,\ap})$. We also know that
$\#\{\gamma\in\Delta^+\mid (\gamma,\theta^\vee)=1\}=2\bigl( (\rho,\theta^\vee)-1\bigr)$ and
\[
\#I(\ap)_{max}=\#\eus N(w_{\theta,\ap})=(\rho,\theta^\vee)-1=h-2 . 
\]
Therefore, Conjecture~\ref{conj:ess-fund} would imply that $\#\ess(\eus F_{\ap})-\#\eus F_\ap\le h-2$.
Then a companion conjecture is
\begin{conj}    \label{conj:ess-fund2}
(1) \ If $\Delta$ is simply-laced and $\theta$ is fundamental, then $\mathrm{defect}(\eus F_\ap)\le h-2$;
\\  \indent
(2) \ Furthermore, $\mathrm{defect}(\eus F_\ap)=h-2$ if and only if $\ap=\hat\ap$ (i.e., $(\theta,\ap)\ne 0$).
\\  \indent
(3) \  $\mathrm{defect}(\eus F_\ap)=0$ if and only if $\ap\in\Pi$ is an endpoint of the Dynkin diagram that is different from $\hat\ap$.
\end{conj}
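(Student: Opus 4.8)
The plan is to reduce the whole statement to a computation of the $\eus F_\ap$-statistic outside the ideal. Since $I(\ap)_{max}\subset\ess(\eus F_\ap)$ by Theorem~\ref{thm:main}, with $n_\ap\equiv 1$ there, and $\#\eus F_\ap=\#I(\ap)_{max}$, we have $\mathrm{defect}(\eus F_\ap)=\#\{\mu\in\Delta^+\setminus I(\ap)_{max}\mid n_\ap(\mu)=1\}$. Part~(1) is precisely the bound recorded after Conjecture~\ref{conj:ess-fund}, so I would first establish Conjecture~\ref{conj:ess-fund}, i.e.\ that every essential root outside the ideal lies in $\gH$; equivalently, that $n_\ap(\mu)\ge 2$ whenever $\mu\notin I(\ap)_{max}$ and $(\mu,\theta)=0$. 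Granting this, $\ess(\eus F_\ap)\setminus I(\ap)_{max}\subset\eus N(w_{\theta,\ap})$ and hence $\mathrm{defect}(\eus F_\ap)\le\#\eus N(w_{\theta,\ap})=h-2$.

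To prove $n_\ap(\mu)\ge 2$ I would generalise step~{\bf 2)} of the proof of Theorem~\ref{teor:ess-theta-fundam}: express $n_\ap(\mu)$ through the set $\Phi_\mu=\{\gamma\in\Delta^+\mid\gamma-\mu\in\Delta^+\}$, counting those $\gamma\in I(\ap)_{max}$ for which $\mu\in\eus N(w_{\gamma,\ap})$, and produce two distinct such $\gamma$ by a good/bad pairing of the decompositions $\gamma=\mu+\mu'$. The main obstacle is that, away from the abelian-nilradical case, $I(\ap)_{max}$ contains roots $\gamma$ with $[\gamma:\ap]>1$, for which Proposition~\ref{prop:inv-set-for-height-1} (which identifies $\eus N(w_{\gamma,\ap})$ with decompositions of $\gamma$) does not apply. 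Handling this $\ap$-height bookkeeping uniformly is, I expect, the technical heart of the argument.

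For part~(2) the ``if'' direction is Theorem~\ref{teor:ess-theta-fundam}. For ``only if'', fix $\ap\ne\hat\ap$, so $(\theta,\ap)=0$. Because $\theta\in I(\ap)_{max}$ always contributes to $n_\ap(\mu)$ for $\mu\in\eus N(w_{\theta,\ap})$, such a $\mu$ is essential iff $\theta$ is the \emph{unique} $\gamma\in I(\ap)_{max}$ with $\mu\in\eus N(w_{\gamma,\ap})$. It thus suffices to exhibit one $\mu\in\eus N(w_{\theta,\ap})$ covered by a second $\gamma\prec\theta$ in the ideal; starting from $\theta=\mu+(\theta-\mu)$ with $\theta-\mu\in I(\ap)_{min}$ (Lemma~\ref{lm:N(w_theta,ap)}) and exploiting $(\theta,\ap)=0$ to peel off a simple reflection should locate such a covering root, giving $\mathrm{defect}(\eus F_\ap)<h-2$.

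For part~(3), the endpoints $\ap\ne\hat\ap$ with $[\theta:\ap]=1$ are abelian nilradicals with $(\theta,\ap)=0$, so Theorem~\ref{thm:ab-nilrad}(ii) gives $\mathrm{defect}(\eus F_\ap)=0$; this settles all endpoints in $\GR{D}{n}$ and $\GR{E}{6}$ and one endpoint in $\GR{E}{7}$. The remaining non-$\hat\ap$ endpoints have $[\theta:\ap]\ge 2$ (one in $\GR{E}{7}$, both in $\GR{E}{8}$), and for these I would refine the $\Phi_\mu$-argument of part~(1) to show that every $\mu\in\eus N(w_{\theta,\ap})$ is covered by some $\gamma\ne\theta$, falling back on direct root-system computation in these finitely many cases. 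For the converse---interior nodes $\ap\ne\hat\ap$ have $\mathrm{defect}>0$---I would exhibit for each such $\ap$ a single essential non-ideal root, namely a $\mu\in\eus N(w_{\theta,\ap})$ admitting no good decomposition other than $\theta=\mu+(\theta-\mu)$. Overall, the uniform control of $\ap$-height $\ge 2$ elements of $I(\ap)_{max}$ is the crux; short of a structural lemma, the exceptional types must be closed by computation.
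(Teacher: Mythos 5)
This statement is Conjecture~\ref{conj:ess-fund2} of the paper: the author does not prove it, and offers only partial evidence (Theorem~\ref{thm:ab-nilrad}(ii), Theorem~\ref{teor:ess-theta-fundam}, and the $\GR{E}{6}$ computation in Example~\ref{ex:E6}). Your proposal does not close that gap — it is a research programme whose unexecuted steps are exactly the open content. The parts of your argument that are solid are the parts already in the paper: the identity $\mathrm{defect}(\eus F_\ap)=\#\bigl(\ess(\eus F_\ap)\setminus I(\ap)_{max}\bigr)$, the reduction of part (1) to Conjecture~\ref{conj:ess-fund} (this is verbatim the remark the author makes right after stating that conjecture), the ``if'' direction of (2) via Theorem~\ref{teor:ess-theta-fundam}, and the $[\theta:\ap]=1$ endpoints in (3) via Theorem~\ref{thm:ab-nilrad}(ii). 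Your bookkeeping of the remaining endpoint cases (one in $\GR{E}{7}$, two in $\GR{E}{8}$, all with $[\theta:\ap]\ge 2$) is also correct.

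Everything else is a plan, and the plans hit real obstructions. Your route to Conjecture~\ref{conj:ess-fund} — generalising step {\bf 2)} of the proof of Theorem~\ref{teor:ess-theta-fundam} via the sets $\Phi_\mu$ and a good/bad pairing — founders on the obstacle you yourself name: that argument needs to identify $\eus N(w_{\gamma,\ap})$ with decompositions of $\gamma$, which Proposition~\ref{prop:inv-set-for-height-1} provides only when $[\gamma:\ap]=1$; for $[\gamma:\ap]\ge 2$ the paper explicitly says no description of $\eus N(w_{\gamma,\ap})$ is known, and you supply none. The ``only if'' of (2) and both directions of (3) for interior nodes and the high-coefficient endpoints are deferred to an unspecified ``peeling off a simple reflection'' or to ``direct root-system computation'' that is never carried out; moreover, those directions silently presuppose part (1) (hence Conjecture~\ref{conj:ess-fund}), since without the containment $\ess(\eus F_\ap)\setminus I(\ap)_{max}\subset\eus N(w_{\theta,\ap})$, exhibiting non-essential roots inside $\eus N(w_{\theta,\ap})$ controls nothing. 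So the proposal should be assessed as a correct organisation of the problem and an accurate inventory of what is known, but not as a proof — which is consistent with the fact that the statement remains a conjecture in the paper itself.
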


Note that if $[\theta:\ap]=1$, then $\mathrm{defect}(\eus F_\ap)=0$ (Theorem~\ref{thm:main}) and
$\ap$ is always an endpoint of the Dynkin diagram for $\GR{D}{n}$ and $\GR{E}{n}$.

\begin{ex}  \label{ex:E6}  
For $\Delta$ of type $\GR{E}{6}$,
straightforward computations show that both Conjectures are true,  
$\mathrm{defect}(\eus F_{\ap_2})=\mathrm{defect}(\eus F_{\ap_4})=2$, and 
$\mathrm{defect}(\eus F_{\ap_3})=4$.
\end{ex}
Another curious observation is that in all cases, where the explicit description is known,
$\ess(\eus F_\ap)$ appears to be the set of roots of a $\be$-{\sl stable subspace\/} of $\ut^+$. This should be compared with the fact all \MS\ in Example~\ref{ex:F4} do not have this property, and the reason is that $\ca$ there is not
an abelian ideal!

\end{document}